\newcommand{\const}{\operatorname{const.}}
\newcommand{\diam}{\operatorname{diam}}
\newcommand{\Diff}{\operatorname{Diff}}
\newcommand{\dvol}{\operatorname{dvol}}
\newcommand{\End}{\operatorname{End}}
\newcommand{\Id}{\operatorname{Id}}
\newcommand{\Isom}{\operatorname{Isom}}
\newcommand{\N}{{\mathbb N}}
\newcommand{\R}{{\mathbb R}}
\newcommand{\Ric}{\operatorname{Ric}}
\newcommand{\Rm}{\operatorname{Rm}}
\newcommand{\SL}{\operatorname{SL}}
\newcommand{\SO}{\operatorname{SO}}
\newcommand{\supp}{\operatorname{supp}}
\newcommand{\Tr}{\operatorname{Tr}}
\newcommand{\vol}{\operatorname{vol}}
\newcommand{\Z}{{\mathbb Z}}
\numberwithin{equation}{section}
\theoremstyle{plain}
\newtheorem{definition}[equation]{Definition}
\newtheorem{assumption}[equation]{Assumption}
\newtheorem{lemma}[equation]{Lemma}
\newtheorem{theorem}[equation]{Theorem}
\newtheorem{proposition}[equation]{Proposition}
\newtheorem{corollary}[equation]{Corollary}
\theoremstyle{remark}
\newtheorem{remark}[equation]{Remark}
\newtheorem{example}[equation]{Example}
\begin{document}

\title[Collapsing in the Einstein flow]
{Collapsing in the Einstein flow}

\author{John Lott}
\address{Department of Mathematics\\
University of California, Berkeley\\
Berkeley, CA  94720-3840\\
USA} \email{lott@berkeley.edu}

\thanks{Research partially supported by NSF grant
DMS-1510192}
\date{April 17, 2018}

\begin{abstract}
We consider expanding vacuum spacetimes with a CMC foliation by
compact spacelike hypersurfaces.
Under scale invariant {\it a priori} geometric bounds (type-III),
we show that there are 
arbitrarily large future time intervals that are modelled by a flat 
spacetime or a Kasner spacetime. We give related results for a class
of expanding vacuum spacetimes that do not satisfy the {\it a priori} bounds
(type-II).
\end{abstract}

\maketitle

\tableofcontents

\section{Introduction}

This paper is about the future behavior of vacuum Einstein
solutions.  We make the following assumptions :
\begin{enumerate}
\item We have a globally hyperbolic vacuum 
spacetime $M$ with a single boundary component which 
is an initial spacelike hypersurface.
\item There is a foliation of $M$ by compact
$n$-dimensional  constant mean
curvature (CMC) spacelike hypersurfaces $X$.
\item The mean curvatures $H$ of the hypersurfaces are future-increasing 
and range over an interval
$[H_0, 0)$, where $H_0 < 0$. 
\end{enumerate}

To say a word about the assumptions, 
there are examples of spatially compact globally hyperbolic vacuum
spacetimes without a CMC hypersurface 
\cite{Chrusciel-Isenberg-Pollack (2005)}. Nevertheless,  
having a CMC foliation is generally considered to be a fair assumption
and it allows one to define a canonical time
function, the Hubble time $t = - \frac{n}{H}$. The expanding nature of the
spacetime is the statement that $H < 0$.

The Lorentzian metric on $M$ can be written as $g = - L^2 dt^2 + h(t)$, where
$h(t)$ is a Riemannian metric on the compact manifold $X$.
It is well known that 
the vanishing of the Ricci curvature of $g$ can be written as a
flow ${\mathcal E}$, parametrized by time $t$,
on triples $(h,K,L)$ that satisfy certain constraint equations.
Here $K$ is a covariant $2$-tensor field on $X$ that becomes the
second fundamental form of the time slices. We call ${\mathcal E}$ an
Einstein flow.

Fischer and Moncrief found that the normalized spatial volume
$(-H)^n \vol(X, h(t))$ is monotonically
nonincreasing, and constant exactly when $g$ describes a Lorentzian cone over
a Riemannian Einstein manifold with Einstein constant $-(n-1)$, i.e.
$L=1$ and $h(t) = t^2 h_{Ein}$ 
\cite{Fischer-Moncrief (2002)}.  (A closely related monotonic quantity
was found by Anderson \cite{Anderson (2001)}.) They
suggested that the monotonicity of their normalized volume should
imply that for a large part of $X$, in the sense of relative volume,
its future development 
is modelled on a Lorentzian cone of the type mentioned above.

\subsection{Results}

In this subsection 
we state the main results of this paper in a somewhat loose form,
with references to the precise statements in the body of the paper.

\subsubsection{Integral result}

We must first introduce the rescaling of an
expanding CMC
Einstein flow ${\mathcal E}$.  Given $s \ge 1$, put $h_s(u) = s^{-2} h(su)$,
$K_s(u) = s^{-1} K(su)$ and $L_s(u) = L(su)$. Then
${\mathcal E}_s = (h_s, K_s, L_s)$ is also an expanding CMC Einstein flow. Given 
$\Lambda > 1$, the time interval $[\Lambda^{-1}, \Lambda]$ for 
${\mathcal E}_s$ corresponds to the time interval
$[s\Lambda^{-1}, s\Lambda]$ for ${\mathcal E}$. Thus we can
analyze the future behavior of ${\mathcal E}$ by understanding
the limit as $s \rightarrow \infty$ of ${\mathcal E}_s$, on a fixed time
interval. It is not hard to see that ${\mathcal E}$ is
scale invariant if and only if it describes a Lorentzian cone of the
type mentioned above.

There is a pointwise monotonicity statement : $(-H)^n \dvol(X,h(t))$
is monotonically nonincreasing.  Put $\dvol_\infty = 
\lim_{t \rightarrow \infty} (-H)^n \dvol(X,h(t))$. 

As a consequence of the monotonicity of normalized volume, one obtains an
integral result about future evolution.

\begin{theorem} \label{thm1}
(Propositions \ref{1.37} and \ref{1.42})
After rescaling, the future evolution becomes increasingly
scale invariant, in an integral sense with respect to 
$\dvol_\infty$.
\end{theorem}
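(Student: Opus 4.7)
The plan is to turn the pointwise monotonicity of $(-H)^n \dvol(X,h(t))$ into a quantitative integral estimate, and then transfer it to the rescaled flow via a substitution in time.

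First I would write the $t$-derivative of the density in the form
\begin{equation*}
\frac{\partial}{\partial t} \bigl( (-H)^n \dvol_{h(t)} \bigr) \;=\; -\, F(t,\cdot) \cdot (-H)^n \dvol_{h(t)},
\end{equation*}
where $F \ge 0$ is an algebraic expression built from the traceless part of $K$, from $L-1$, and from the deviation of $\Ric_{h(t)}$ from $-(n-1)t^{-2} h(t)$. The crucial properties to verify are that $F$ is pointwise nonnegative and vanishes exactly on a Lorentzian cone over an Einstein manifold with Einstein constant $-(n-1)$. This is the local content of the Fischer--Moncrief and Anderson monotonicity formulas, and should constitute Proposition \ref{1.37}.

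Next, since $(-H)^n \dvol_{h(t)}$ decreases monotonically to $\dvol_\infty$, integrating the displayed identity in $t$ from $t_0 := -n/H_0$ to $\infty$ and then over $X$ gives the finite bound
\begin{equation*}
\int_{t_0}^\infty \int_X F(t,x) \, (-H)^n \dvol_{h(t)} \, dt \;\le\; \int_X \bigl( (-H_0)^n \dvol_{h(t_0)} - \dvol_\infty \bigr) \;<\; \infty.
\end{equation*}

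The third step uses the scale behaviour. Under the rescaling $(h,K,L) \mapsto (h_s,K_s,L_s)$ one checks $-H_s(u) = -s H(su)$ and $\dvol_{h_s(u)} = s^{-n} \dvol_{h(su)}$, so the density $(-H)^n \dvol$ is scale invariant, while the pointwise identity above forces $F_{{\mathcal E}_s}(u,x) = s \, F_{\mathcal E}(su,x)$. Substituting $t = su$ therefore yields
\begin{equation*}
\int_{\Lambda^{-1}}^\Lambda \int_X F_{{\mathcal E}_s}(u,x) \, (-H_s)^n \dvol_{h_s(u)} \, du \;=\; \int_{s\Lambda^{-1}}^{s\Lambda} \int_X F(t,x) \, (-H)^n \dvol_{h(t)} \, dt,
\end{equation*}
and the right hand side is a tail of the convergent integral above, hence tends to $0$ as $s \to \infty$. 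Since $\dvol_\infty \le (-H)^n \dvol_{h(t)}$ pointwise for every $t$ by monotonicity, the same vanishing persists with the weight $\dvol_\infty$ in place of $(-H_s)^n \dvol_{h_s}$. This is the asserted integral scale invariance, which should constitute Proposition \ref{1.42}.

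The main obstacle, as I see it, is the first step: extracting from the Fischer--Moncrief computation an explicit pointwise nonnegative density $F$ with the correct zero set (a local, not merely global, characterization of the cone) and the correct homogeneity under rescaling. Once such an $F$ is identified, the remaining steps are essentially formal manipulations combining monotonicity, a time substitution, and the pointwise comparison with $\dvol_\infty$.
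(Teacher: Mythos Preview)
Your strategy for the integral vanishing---finite spacetime integral from monotonicity, then a time substitution to show the rescaled defect on $[\Lambda^{-1},\Lambda]$ is a tail of that integral---is correct and is essentially what the paper does for Proposition~\ref{1.37}. Two adjustments are needed. First, there is no single pointwise density $F$ containing all three defects: the pointwise identity (\ref{1.16}) only gives $\frac{n}{t}(1-L)$; the term $|tK^0|^2 L$ appears after integrating over $X$ and using the lapse equation (\ref{1.13}), and the scalar curvature defect $t^2 R + n(n-1)$ is then identified with $|tK^0|^2$ via the Hamiltonian constraint (\ref{1.2}). So the three $L^1$ convergences in Proposition~\ref{1.37} are established separately, each ultimately reduced to the finiteness of $\int_{t_0}^\infty \int_X |tK^0|^2 L \, t^{-n}\dvol_{h(t)}\, \frac{dt}{t}$.

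The real gap is that you have not touched Proposition~\ref{1.42} at all. You have mislabeled the propositions: the integral vanishing of the defect quantities is Proposition~\ref{1.37}, not \ref{1.42}. Proposition~\ref{1.42} is a statement about the \emph{metric} $h_s$ itself, not about $L$, $K^0$, or $R$: it asserts that for each $x$, the unit-determinant part of $h_s(x,\Lambda)$ relative to $h_s(x,1)$, viewed as a point of the symmetric space $\SO(n)\backslash \SL(n, \R)$, converges to the identity in $L^2(X,\dvol_\infty)$ as $s\to\infty$. The proof uses the evolution equation $\partial_t h = -2LK$ to compute the length of the curve $u \mapsto h_s(x,u)$ in that symmetric space as $2\int_1^\Lambda |K_s^0| L_s\, du$, then bounds the squared distance via Cauchy--Schwarz by $4(\Lambda-1)\int_1^\Lambda |K_s^0|^2 L_s\, du$, and finally invokes Proposition~\ref{1.37}. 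Your proposal contains no analogue of this step, so as written it proves only half of Theorem~\ref{thm1}.
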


Theorem \ref{thm1} can be 
considered to say that the Fischer-Moncrief suggestion
is true in an integral sense.  If $\dvol_\infty = 0$ then
Theorem \ref{thm1} is true but vacuous.

To proceed, we divide the expanding CMC Einstein flows into
two types.  Using the time vector field, one can
make sense of the norm $|\Rm|_T$ of the Lorentzian curvature tensor
(\ref{1.47}).
Borrowing terminology from Ricci flow, we say that 
an expanding CMC Einstein flow is type-III if
$|\Rm|_T = O(t^{-2})$, and type-IIb otherwise.

As model spaces, we list the simply connected spatially homogeneous
solutions with a future-directed expanding homothetic Killing vector field
(${\mathcal L}_V g = 2 g$) and a spatially compact quotient, in the case $n=3$
\cite[p. 187]{Ellis-Wainwright (1997)}.  They are all type-III.
\begin{enumerate}
\item The Milne spacetime.  This is the interior of a forward light cone
in the Minkowski space $\R^{1,3}$.
\item The Bianchi-III flat spacetime.  This is the product of $\R$ with
the interior of a forward light cone in the Minkowski space $\R^{1,2}$.
\item The Taub-flat spacetime.  This is the product of $\R^2$ with the
interior of a forward light cone in the Minkowski space $\R^{1,1}$.
\item The Kasner spacetimes on $(0, \infty) \times \R^3$, with metric
$g = - du^2 + u^{2p_1} dx^2 + u^{2p_2} dy^2 + u^{2p_3} dz^2$.
Here $p_1 + p_2 + p_3 = p_1^2 + p_2^2 + p_3^2 = 1$.  
\end{enumerate} 
The Taub-flat spacetime is also the Kasner spacetime with $(p_1,p_2,p_3)=
(1,0,0)$, but we list it separately. Only the Milne spacetime is
scale invariant in our earlier sense.

\subsubsection{Type-III Einstein flows}

In this subsubsection we assume that the Einstein flow ${\mathcal E}$
is type-III. Then
we can improve Theorem \ref{thm1} to a pointwise statement.

\begin{theorem} \label{thm2}
(Proposition \ref{2.5} and Remark \ref{2.6.5})
Given $x \in X$, if $\dvol_\infty(x) \neq 0$ then
after rescaling, the future evolution near $x$ becomes increasingly
like that of a Lorentzian cone over
a Riemannian Einstein space with Einstein constant $-(n-1)$.
\end{theorem}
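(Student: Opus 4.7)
The plan is to use the type-III bound to convert the integral conclusion of Theorem \ref{thm1} into pointwise Cheeger--Gromov convergence. The bound $|\Rm|_T = O(t^{-2})$ is scale invariant, so after rescaling it gives $|\Rm_{\mathcal{E}_s}|_T \le C$ on any fixed time interval $[\Lambda^{-1}, \Lambda]$ uniformly in $s$. This uniform curvature control is the compactness ingredient missing in the general (type-IIb) case.

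First, fix $x \in X$ with $\dvol_\infty(x) > 0$ and a sequence $s_j \to \infty$. The monotonicity of $(-H)^n \dvol(X, h(t))$ together with $\dvol_\infty(x) > 0$ provides a uniform lower volume bound for small $h_{s_j}(1)$-balls around $x$, since $\dvol_{h_s(1)}$ is, up to a constant, the normalized volume form $(-H)^n \dvol(X, h(s))$ pulled back to $u = 1$. Thus the pointed rescaled flows $(\mathcal{E}_{s_j}, x)$ are noncollapsed at $x$. Combined with the type-III curvature bound and the standard CMC constraint and evolution equations, which upgrade $C^0$ curvature control to $C^k$ control on the full data $(h_s, K_s, L_s)$, this should yield a $C^\infty$ precompact family in a pointed (and, if necessary, equivariant) Cheeger--Gromov sense. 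Passing to a subsequence, one extracts a smooth limit Einstein flow $\mathcal{E}_\infty$ with spacetime metric $g_\infty$ on a pointed manifold $(X_\infty, x_\infty)$.

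The next step is to prove that $\mathcal{E}_\infty$ is scale invariant. Theorem \ref{thm1} asserts that the failure of $\mathcal{E}_s$ to be scale invariant, measured integrally against $\dvol_\infty$, tends to zero as $s \to \infty$. Smooth convergence on a neighborhood of $x$ on which $\dvol_\infty$ has positive mass allows this integral defect to be passed to the limit, yielding the pointwise identity $\mathcal{L}_V g_\infty = 2 g_\infty$ for the homothetic vector field $V = u \partial_u$. The Fischer--Moncrief rigidity recalled in the introduction then forces $g_\infty = - du^2 + u^2 h_{\mathrm{Ein}}$ with $h_{\mathrm{Ein}}$ Einstein of constant $-(n-1)$. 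Since all subsequential limits have this rigid cone form and are determined locally by $\dvol_\infty$ near $x$, the argument closes with full (not merely subsequential) convergence.

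The main obstacle I expect is the smooth compactness assertion. One must propagate the $C^0$ type-III curvature bound to uniform $C^k$ bounds on $(h_s, K_s, L_s)$, controlling in particular the lapse $L_s$, which is governed by an elliptic CMC equation whose coefficients involve $|K_s|^2$ and the spatial scalar curvature. One must also rule out collapse throughout a neighborhood of $x$ rather than only at $x$ itself; this is where the hypothesis $\dvol_\infty(x) \neq 0$ enters substantively, via the monotonicity and an absolute-continuity-type argument, and where the assumption cannot be relaxed, since collapsing regions have $\dvol_\infty = 0$ by definition.
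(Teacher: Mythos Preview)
Your approach has two genuine gaps that the paper identifies and works around. First, the claim that $\dvol_\infty(x)>0$ yields a lower bound on the volume of $h_{s_j}(1)$-balls around $x$ is false: the pointwise monotonicity controls the \emph{volume form}, but without control on distance distortion you cannot convert this into a ball-volume lower bound. Remark~\ref{2.6.5} makes exactly this point --- the rescaled geometry near $x$ could look like a point escaping down a hyperbolic cusp, with the diameter of a fixed coordinate neighborhood growing faster than $t$ so as to keep its volume comparable to $t^n$. Consequently you cannot guarantee a manifold limit; the paper absorbs this by taking limits in the category of \'etale groupoids (Corollary~\ref{2.4}). Second, the assertion that the CMC constraint and evolution equations upgrade the $C^0$ curvature bound to $C^k$ control is not available here. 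Unlike Ricci flow, the Einstein flow has no Shi-type derivative estimates (this is noted in the introduction), and the paper obtains only weak $W^{2,p}$ and $C^{1,\alpha}$ convergence (Proposition~\ref{1.52}, Corollary~\ref{1.55}). So your smooth compactness step does not go through.

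The paper's route to scale invariance of the limit also differs from yours. Rather than passing the integral defect of Theorem~\ref{thm1} to the limit, it uses the \emph{pointwise} identity $\partial_t \ln\bigl(\dvol_\infty / (t^{-n}\dvol_t)\bigr) = \tfrac{n}{t}(1-L)$ at the specific point $x$ to obtain $\int_{t_0}^\infty (1-L(x,t))\,\tfrac{dt}{t} < \infty$, which forces $L_\infty(x_\infty,u)=1$ for all $u$. Elliptic regularity applied to (\ref{1.28}) then makes $L_\infty$ regular enough to run the strong maximum principle on the unit space of the groupoid, giving $L_\infty\equiv 1$ and $K_\infty^0=0$ globally; Lemma~\ref{1.10} finishes. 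This pointwise-plus-maximum-principle argument is what replaces your attempted transfer of integral information, and it works at the available $W^{2,p}$ regularity.
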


The Riemannian Einstein space in the preceding theorem is of a
generalized type, as discussed below.

We say that the Einstein flow is noncollapsing if $\dvol_\infty  \neq 0$.
Anderson initiated the study of noncollapsing type-III Einstein flows
using rescaling, monotonicity and compactness results
\cite{Anderson (2001)}.  We
recapitulate these results in Subsection \ref{subsect1.3}. Most of this paper
is concerned with the collapsing case, i.e. when $\dvol_\infty$ vanishes.
The main point of the paper is to make use of results on 
Einstein flows with continuous spatial symmetries.
Einstein flows with symmetries have long been studied in general
relativity as toy models.  As in \cite{Lott (2010)}, our viewpoint is
rather that information about Einstein flows with symmetries can give
information about all Einstein flows that satisfy an {\it a priori}
curvature bound.

The appearance of continuous symmetries in collapsing Riemannian manifolds,
under uniform sectional curvature bounds, is known from work of Margulis,
Gromov, Cheeger, Fukaya and many others.  In this paper we promote this
to type-III Einstein flows, in analogy to earlier work on type-III
Ricci flows \cite{Lott (2010)}. To describe the idea, consider first
a manifold $X$ with a sequence of Riemannian metrics that collapse with
uniformly bounded curvature.
To analyze the geometry near a point
$x \in X$, one approach is to pass to finite covers of $X$, 
if possible, that have a noncollapsed pointed limit.  This unwrapping approach
was used for the Einstein flow by Anderson in \cite{Anderson (2001)}.
Another approach is to pull back metrics to a ball in $T_xX$, using the
exponential map, and pass to a noncollapsed pointed limit.  
Both of these methods work well for local regularity
issues in the Einstein flow.  However, to obtain nonlocal results, for example
to apply monotonicity formulas, it is necessary to have a global
approach.  For example, in the tangent space approach, it is necessary to 
glue together the various noncollapsed limits on the balls in the tangent 
spaces $T_x X$, with their
local symmetries, as one varies $x$.
A convenient language to do this is that of \'etale groupoids, as used
for the Ricci flow in \cite{Lott (2007)} and \cite{Lott (2010)}.
A collapsing sequence of pointed $n$-dimensional
Riemannian manifolds, with uniformly bounded
curvature, has a subsequential
limit that is a pointed $n$-dimensional Riemannian groupoid.
The Riemannian groupoid is an object with local symmetries; its orbit
space is the Gromov-Hausdorff limit of the collapsing Riemannian manifolds,
and it also retains information about the limit of their universal covers.

\begin{theorem} \label{thm3} (Corollary \ref{1.55})
Given a type-III Einstein flow ${\mathcal E}$ on a pointed 
$n$-dimensional manifold,
if $\{t_i\}_{i=1}^\infty$ is a sequence tending to infinity then
after passing to a subsequence, the rescalings ${\mathcal E}_{t_i}$ converge to
a type-III Einstein flow ${\mathcal E}^\infty$ on a pointed 
$n$-dimensional \'etale groupoid.
\end{theorem}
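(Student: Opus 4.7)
The plan is to combine the type-III curvature bound with bootstrapping of higher regularity to obtain uniform smooth bounds on the rescaled flows $\mathcal{E}_{t_i}$ over compact time intervals, and then apply a pointed groupoid version of the Cheeger--Gromov--Fukaya compactness theorem, along the lines of the analogous statement for Ricci flow in \cite{Lott (2007)} and \cite{Lott (2010)}.

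Fix $\Lambda > 1$ and consider the restriction of $\mathcal{E}_{t_i}$ to the time interval $I_\Lambda = [\Lambda^{-1}, \Lambda]$; since the original flow is defined on $[-n/H_0, \infty)$, once $t_i$ is large enough $I_\Lambda$ lies in the domain of $\mathcal{E}_{t_i}$. The norm $|\Rm|_T$ scales as $|\Rm|_T(\mathcal{E}_{t_i})(u,\cdot) = t_i^2 \, |\Rm|_T(\mathcal{E})(t_i u, \cdot)$, so the type-III hypothesis $|\Rm|_T(\mathcal{E}) \le Ct^{-2}$ yields the uniform pointwise bound $|\Rm|_T(\mathcal{E}_{t_i}) \le Cu^{-2} \le C\Lambda^2$ on $I_\Lambda$, independently of $i$. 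This is the essential input: it converts the asymptotic curvature decay of $\mathcal{E}$ into a bounded-geometry statement for the sequence $\{\mathcal{E}_{t_i}\}$ on any fixed compact time slab.

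Next I would promote this $C^0$ curvature bound to uniform $C^k$ spacetime bounds on $(h_s, K_s, L_s)|_{I_\Lambda}$ for every $k$. The CMC lapse equation is elliptic with positive zeroth-order term, so combined with the curvature bound it gives uniform $C^k$ control of $L$ on each slice. The evolution equations for $h$ and $K$, together with the Hamiltonian and momentum constraints, then let one bootstrap to spacetime regularity of all orders by standard elliptic-hyperbolic arguments. With these uniform smooth bounds in hand, choose basepoints $x_i$ in the $u=1$ slice of $\mathcal{E}_{t_i}$ and apply the pointed \'etale groupoid compactness theorem: after passing to a subsequence, $(\mathcal{E}_{t_i}|_{I_\Lambda}, x_i)$ converges smoothly to a pointed Einstein flow on a pointed $n$-dimensional \'etale groupoid. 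The evolution and constraint equations pass to the limit under smooth local convergence, and the curvature bound $|\Rm|_T \le C u^{-2}$ persists. A diagonal argument over $\Lambda \to \infty$ then produces a single limit flow $\mathcal{E}^\infty$ defined on $(0, \infty)$ that is itself type-III.

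The main obstacle I expect is the lapse. Unlike $h$ and $K$, which evolve by local PDEs, the lapse $L$ is determined on each slice by a nonlocal elliptic problem, and in the collapsed limit one must verify that $L^\infty$ is a genuine equivariant function on the \'etale groupoid rather than merely a family of local functions on the covering charts. The key point is that the CMC lapse equation is intrinsic to the Einstein geometry, so it is preserved by the local isometries emerging in the collapse; once this equivariance is established, uniform elliptic estimates on the local covers of the limit groupoid incorporate $L$ into the convergence, completing the construction.
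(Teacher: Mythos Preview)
Your overall architecture---uniform scale-invariant curvature bound on compact time slabs, pointed groupoid compactness \`a la \cite{Lott (2007)}, diagonal argument in $\Lambda$---is exactly the paper's. The gap is the regularity claim. You assert that the $C^0$ bound on $|\Rm|_T$ can be bootstrapped to uniform $C^k$ spacetime bounds for every $k$, giving smooth convergence. This is false for the Einstein flow: equations (\ref{1.4})--(\ref{1.5}) are hyperbolic, not parabolic, and there are no Shi-type derivative estimates; a bound on $|\Rm|_T$ gives no bound on $|\nabla \Rm|_T$. The paper makes this point explicitly in the comparison with Ricci flow. What Proposition \ref{1.52}/\ref{2.2} actually extracts from the curvature bound is only $W^{2,p}$-regularity of $h$ and $L$ and $W^{1,p}$-regularity of $K$ (via \cite[Proposition 2.2]{Anderson (2001)} for $|K|$, the elliptic lapse equation (\ref{1.13}) and its time derivatives for $L$, and harmonic coordinates plus \cite{Chen-LeFloch (2009)} for the rest). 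The convergence in Theorem \ref{thm3} is therefore only weak $W^{2,p}$ and norm $C^{1,\alpha}$, as stated immediately after the theorem. This is not a cosmetic point: it is why, for example, the type-IIb blowdown in Section \ref{sect3} can converge to a flat limit even though the rescaled curvature norm equals $1$ at the basepoint.

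The ``obstacle'' you anticipate---equivariance of $L^\infty$ on the groupoid---is not one. The lapse equation (\ref{1.13}) is a local elliptic PDE on each slice, so $L$ is automatically invariant under the local isometries arising in the collapse; no separate argument is needed. The genuine work is the $W^{2,p}$ spacetime estimates for $(h,K,L)$, which the paper outsources to \cite{Anderson (2001)} and \cite{Chen-LeFloch (2009)}, and the replacement of the noncollapsed Cheeger--Gromov limit by the groupoid limit of \cite[Proposition 5.9]{Lott (2007)}.
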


The convergence in Theorem \ref{thm3} is in the weak $W^{2,p}$-topology
for any $p < \infty$, and in the $C^{1,\alpha}$-topology for any
$\alpha \in (0,1)$.

In the rest of this subsubsection, we assume that $n = 3$ and $X$ is 
aspherical, i.e. has contractible universal cover.
Then the limit
Einstein flow ${\mathcal E}^\infty$ is of the type that occurs in
dimensional reduction.  It lives on an orbifold $X^\infty$,
which is the orbit space of ${\mathcal E}^\infty$. When $X^\infty$ is not
a point, the fields on $X^\infty$ consist of a quintuple
$(h^\infty,K^\infty,L^\infty,G^\infty,A^\infty)$ 
where $h^\infty$ is a Riemannian metric, $K^\infty$ is a covariant
$2$-tensor field, $L^\infty$ is a function, $G^\infty$ 
is locally an $N \times N$
positive definite matrix and $A^\infty$ is locally an $\R^N$-valued $1$-form.
Here $N = 3 -\dim(X^\infty)$. 

Thus we are reduced to understanding the future behavior of 
${\mathcal E}^\infty$. To do so, we again use monotonic quantities.
We need to assume that there is some $D < \infty$ so that the original
flow ${\mathcal E}$ has $\diam(X, h(t)) \le D t$. This ensures that
$X^\infty$ is compact.

We now list the results about type-III Einstein flows
in order of increasing dimension of 
$X^\infty$. All of the results have consequences for the pointed future
behavior of the lift of the Einstein flow ${\mathcal E}$ to 
the universal cover $\widetilde{X}$, that do not invoke
groupoids (Corollaries \ref{2.13}, \ref{2.29} and \ref{2.48}).
We say that an Einstein flow on an \'etale groupoid is of Kasner type
if it is locally isometric to a Kasner solution, and similarly
for the other model solutions. We first consider the case when the
orbit space $X^\infty$ is a point.

\begin{theorem} \label{thm4} (Corollary \ref{2.12})
Suppose that the original Einstein flow ${\mathcal E}$ is such that
$\liminf_{t \rightarrow \infty} t^{-1} \diam(X, h(t)) = 0$. Then
there is a sequence $\{t_i\}_{i=1}^\infty$ going to infinity
such that the rescaled solutions ${\mathcal E}_{t_i}$ approach
an Einstein flow of Kasner type.
\end{theorem}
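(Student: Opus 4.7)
The plan is to use the diameter hypothesis with Theorem \ref{thm3} to extract a maximally collapsed subsequential limit on a groupoid, and then identify that limit by classifying locally homogeneous self-similar vacuum solutions in dimension three.

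By hypothesis, choose $t_i \to \infty$ with $t_i^{-1}\diam(X, h(t_i)) \to 0$. Passing to a subsequence via Theorem \ref{thm3}, the rescaled flows $\mathcal{E}_{t_i}$ converge in the weak $W^{2,p}$ and $C^{1,\alpha}$ senses to a type-III Einstein flow $\mathcal{E}^\infty$ on a pointed $3$-dimensional \'etale groupoid $\mathcal{G}^\infty$. The spatial slice of $\mathcal{E}_{t_i}$ at rescaled time $u = 1$ has diameter $t_i^{-1}\diam(X, h(t_i)) \to 0$, so the orbit space $X^\infty$ of $\mathcal{G}^\infty$, which is time-independent as the orbit space of a fixed groupoid, is a single point. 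Consequently $\mathcal{E}^\infty$ is locally homogeneous, and its simply connected model is a spatially homogeneous $3$-dimensional vacuum Einstein flow with a CMC foliation; asphericity of $X$ rules out a Bianchi IX model, whose universal cover is $S^3$.

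To pin down the model, I would promote $\mathcal{E}^\infty$ to a self-similar solution: for each $\lambda > 0$, the rescaling $(\mathcal{E}^\infty)_\lambda$ is itself a subsequential limit of $\{\mathcal{E}_{\lambda t_i}\}$, and a rigidity argument (using a collapsed-regime, groupoid-valued analogue of the Fischer--Moncrief monotonicity) shows $(\mathcal{E}^\infty)_\lambda$ is isometric to $\mathcal{E}^\infty$. Hence $\mathcal{E}^\infty$ admits a future-directed expanding homothetic Killing vector field and therefore lies in the list of four simply connected model spacetimes given in the introduction (Milne, Bianchi-III flat, Taub-flat, Kasner). Among these, only the Kasner spacetimes with all $p_i < 1$ have spatial slices that fully collapse under rescaling so as to be compatible with $X^\infty$ being a point: Milne, Bianchi-III flat, and Taub-flat produce orbit spaces of dimensions $3$, $2$ and $1$ respectively. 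Therefore $\mathcal{E}^\infty$ is of Kasner type.

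The main obstacle is the self-similarity promotion. The classical global Fischer--Moncrief rigidity, which characterizes equality in the monotonicity of $(-H)^n \vol(X, h(t))$ as a Lorentzian cone over an Einstein manifold, becomes vacuous in the collapsing regime because the normalized spatial volume tends to zero. The proof must therefore rely on a pointwise or groupoid-level monotonic quantity whose rigidity survives the collapse, built alongside the integral scale-invariance of Theorem \ref{thm1}; equality in such a localized rigidity formula is what forces $\mathcal{E}^\infty$ to be genuinely self-similar, rather than a generic Bianchi flow whose only self-similar trace is its asymptotic Kasner attractor. Once $\mathcal{E}^\infty$ is placed in the four-model list, the point-like nature of $X^\infty$ singles out Kasner.
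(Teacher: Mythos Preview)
Your argument has a genuine gap at the self-similarity promotion, and the gap is not merely technical: the claim is false in general. A subsequential limit $\mathcal{E}^\infty$ with zero-dimensional orbit space can perfectly well be a Taub--nil solution (equation~(\ref{2.9})), and Taub--nil is \emph{not} self-similar --- it carries no timelike homothetic Killing field, and $(\mathcal{E}^\infty)_\lambda$ is genuinely not isometric to $\mathcal{E}^\infty$. So the rigidity step you flag as the ``main obstacle'' cannot be completed, because its conclusion is simply wrong for one of the limits that actually occurs. No collapsed analogue of Fischer--Moncrief monotonicity will rescue it: Remark~\ref{2.14} in the paper explicitly leaves open whether a Taub--nil limit can persist along the full sequence.

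The paper's route avoids self-similarity entirely. The zero-dimensional orbit space forces the local symmetry sheaf $\mathfrak{n}$ to be three-dimensional nilpotent, hence with stalk $\R^3$ or $nil$. One then quotes the explicit classification of left-invariant expanding CMC vacuum flows on $\R^3$ and on $Nil$: the former are exactly the Kasner solutions, the latter exactly the Taub--nil solutions. If the limit is already Kasner, done. If the limit $\mathcal{E}^\infty$ is Taub--nil, one uses the concrete fact (checked from~(\ref{2.9})) that the rescalings $\mathcal{E}^\infty_s$ converge as $s\to\infty$ to a Kasner flow $\mathcal{E}^{\infty,\infty}$, and then runs a diagonal argument: pick $s_j\to\infty$ and a subsequence $t_{i_j}$ so that $\mathcal{E}_{s_j t_{i_j}} \to \mathcal{E}^{\infty,\infty}$, and set $t'_j = s_j t_{i_j}$. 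The theorem only asserts the existence of \emph{some} sequence along which the rescalings approach Kasner, and that is exactly what the diagonal argument delivers.
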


We now assume that we are not in the situation covered by
Theorem \ref{thm4}, and
consider the case when the orbit space $X^\infty$ is one dimensional.
To analyze the future behavior of the limit flow
${\mathcal E}^\infty$, we use monotonic quantities from Appendix
\ref{sectA}. To do so, we need to make an assumption about the existence of an
equiareal foliation.

\begin{theorem} \label{thm5} (Proposition \ref{2.25})
Suppose that any limit Einstein flow has an orbit space of
positive dimension, and there is a limit Einstein 
flow ${\mathcal E}^\infty$ whose
orbit space is one dimensional.
Suppose that there is a time function $\widehat{u}$ for the
limit flow ${\mathcal E}^\infty$ that is comparable to the time
function $u$ for ${\mathcal E}^\infty$, with
the property that $\det(G)$ is constant on level sets of $\widehat{u}$
(Assumption \ref{2.23}).
Then there is a sequence $\{t_i\}_{i=1}^\infty$ going to infinity
such that the rescaled solutions ${\mathcal E}_{t_i}$ approach
an Einstein flow of Taub-flat type.
\end{theorem}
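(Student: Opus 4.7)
The plan is to combine the groupoid convergence of Theorem~\ref{thm3} with the monotonic quantities of Appendix~\ref{sectA}, using a second rescaling inside a one-dimensional limit flow to extract a Taub-flat model. The hypothesis that every limit flow has positive-dimensional orbit space already rules out the Kasner case covered by Theorem~\ref{thm4}, so I may work entirely inside the dimensionally reduced setup.

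First I would use the hypothesis to select a sequence $\{s_i\}$ with $s_i \to \infty$ along which the rescalings ${\mathcal E}_{s_i}$ converge to a limit Einstein flow ${\mathcal E}^\infty$ whose orbit space $X^\infty$ is one-dimensional. As recalled above, the fields of ${\mathcal E}^\infty$ can then be written locally as a quintuple $(h^\infty, K^\infty, L^\infty, G^\infty, A^\infty)$, where $G^\infty$ is a $2\times 2$ positive-definite fiber metric and $A^\infty$ is an $\R^2$-valued $1$-form. Assumption~\ref{2.23} supplies a time function $\widehat u$, comparable to $u$, along which $\det(G^\infty)$ is constant on level sets, placing us in the equiareal setting of Appendix~\ref{sectA}.

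Next I would apply the monotonic quantity $\mathcal F(\widehat u)$ from Appendix~\ref{sectA} to ${\mathcal E}^\infty$. The type-III bound survives passage to the limit, and together with $\widehat u \asymp u$ this yields two-sided bounds on $\mathcal F$. Rescaling ${\mathcal E}^\infty$ by a further sequence $\lambda_j \to \infty$ of the parameter $\widehat u$, monotonicity forces $\frac{d\mathcal F}{d\widehat u} \to 0$ on arbitrarily long time intervals, so after extracting a subsequence we obtain a further limit flow on which $\mathcal F$ is time-constant. A diagonal argument over the two rescalings then yields a sequence $t_i \to \infty$ for the \emph{original} flow ${\mathcal E}$ with the desired convergence of ${\mathcal E}_{t_i}$.

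The main obstacle is the rigidity step: one must show that the equality case $\mathcal F \equiv \textrm{const}$ for the dimensionally reduced system identifies the solution as Taub-flat and not some other reduced model with $2$-dimensional fibers. Concretely, I would need to extract from constancy of $\mathcal F$ that (i) the base $(h^\infty, L^\infty)$ is a Lorentzian cone over a one-dimensional Einstein space, (ii) the fiber metric $G^\infty$ is covariantly constant in both space and $\widehat u$, and (iii) the connection $A^\infty$ is pure gauge. These three conclusions together produce exactly the local product structure of the Taub-flat spacetime: a two-dimensional Milne base times a flat $2$-torus fiber. This step is the dimensionally reduced analog of the Fischer--Moncrief rigidity case, and will require a careful bookkeeping of the Einstein constraints for the quintuple $(h^\infty, K^\infty, L^\infty, G^\infty, A^\infty)$ in the presence of the $\R^2$-fiber symmetry.
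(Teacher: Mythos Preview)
Your outline matches the paper's strategy: a second rescaling of the limit flow ${\mathcal E}^\infty$, a monotonic quantity from Appendix~\ref{sectA} that becomes constant on the double limit ${\mathcal E}^{\infty,\infty}$, and a diagonal extraction to pull the conclusion back to ${\mathcal E}$. The use of Assumption~\ref{2.23} to pass $\widehat u$ through the second rescaling is also as you describe.

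The gap is in your rigidity step (iii). Constancy of the relevant monotonic quantity does \emph{not} by itself force the connection to be pure gauge, and no amount of bookkeeping of the Einstein constraints will produce this. The paper splits on whether the curvature $F$ of $A$ vanishes. In the Gowdy case $F=0$, constancy of $(\partial_{\widehat u}\ln\det G)\int_{\widehat X^\infty}\widehat L^{-1}\,\dvol$ (Subsection~\ref{subsectA.2}) does force $G$ locally constant and the two-dimensional base flat, as you expect. But in the non-Gowdy case $F\neq 0$ one must use the different functional $\widehat{\mathcal E}_K$ of (\ref{A.26}), and its equality case (Subsubsection~\ref{subsubsectA.3.2}) yields the explicit metric (\ref{A.29}), which is \emph{not} Taub-flat: its $S^1$-fiber has uniformly bounded length as $R\to\infty$. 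This rigid solution is eliminated not by the monotonicity argument but by the hypothesis that every limit flow has positive-dimensional orbit space, which gives the linear diameter lower bound $\diam({\mathcal X}^\infty,h^\infty(u))\ge cu$ of (\ref{2.16}) and contradicts the bounded fiber length. So the positive-dimension hypothesis is doing more than excluding the Kasner case of Theorem~\ref{thm4}; it is indispensable for killing the non-Gowdy branch, and without invoking it your argument does not close.
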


Next, we assume that we are not in the situations covered by
Theorems \ref{thm4} and \ref{thm5}, and
consider the case when the orbit space $X^\infty$ is 
two dimensional.
To analyze the future behavior of the limit flow
${\mathcal E}^\infty$, we again use monotonic quantities from Appendix
\ref{sectA}. We now need to make an assumption about the existence of
a CMC foliation on a conformally related three-dimensional Lorentzian
metric.

\begin{theorem} \label{thm6} (Proposition \ref{2.44})
Suppose that any limit Einstein flow has an orbit space of
dimension at least two, and there is a limit 
Einstein flow ${\mathcal E}^\infty$ whose
orbit space is two dimensional.
Suppose that there is a time function $\widehat{u}$ for the
limit flow ${\mathcal E}^\infty$ that is comparable to the time
function $u$ for ${\mathcal E}^\infty$, so that the
level sets of $\widehat{u}$
have constant mean curvature for the conformally modified
Lorentzian metric $\widehat{g}$ of (\ref{2.34})
(Assumption \ref{2.42}).
Then there is a sequence $\{t_i\}_{i=1}^\infty$ going to infinity
such that the rescaled solutions ${\mathcal E}_{t_i}$ approach
an Einstein flow of Bianchi-III flat type.
\end{theorem}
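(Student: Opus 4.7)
The plan is to follow the template of Theorem \ref{thm5} (Proposition \ref{2.25}) one dimension up. First I would unpack the structure of the limit flow ${\mathcal E}^\infty$ on its two-dimensional orbit space $X^\infty$. After dimensional reduction with $N = 3 - 2 = 1$, the data $(h^\infty, K^\infty, L^\infty, G^\infty, A^\infty)$ consists of a $2$-dimensional Riemannian metric $h^\infty$, a covariant $2$-tensor $K^\infty$, a lapse $L^\infty$, a positive function $G^\infty$ and a local $\R$-valued $1$-form $A^\infty$. Together with the time direction, $(h^\infty, L^\infty)$ defines a $(2+1)$-dimensional Lorentzian metric on the base, coupled to a dilaton $G^\infty$ and a Maxwell-like field $A^\infty$. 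The conformal modification (\ref{2.34}) is chosen precisely so that, when the reduced Einstein equations are rewritten in terms of $\widehat{g}$, the resulting $(2+1)$-dimensional system takes a convenient form for monotonicity analysis.

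Next, by Assumption \ref{2.42}, the metric $\widehat{g}$ admits a CMC foliation with respect to a time function $\widehat{u}$ that is comparable to $u$, so we are in the setting of an expanding CMC flow in spacetime dimension $2+1$. This allows one to apply a Fischer--Moncrief type monotonic quantity from Appendix \ref{sectA}: the natural object is the normalized area $(-\widehat{H})^2 \vol$ of the $\widehat{u}$-slices in $(X^\infty, \widehat{h}(\widehat{u}))$, augmented by the matter-energy contributions from the dilaton $G^\infty$ and from $A^\infty$. The construction of Appendix \ref{sectA} should provide strict monotonicity, with equality exactly on configurations in which $\widehat{g}$ is a Lorentzian cone over a hyperbolic surface, $G^\infty$ is $\widehat{u}$-constant, and $A^\infty$ is closed modulo gauge.

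Given such a monotonic quantity, I would then run the standard blow-down argument. Its total variation on $[u_0, \infty)$ is finite, so along any sequence $t_i \rightarrow \infty$ we can extract a subsequence on which the time derivative tends to zero uniformly on rescaled compact time intervals. By Theorem \ref{thm3} applied in this étale-groupoid setting (and with the comparability of $\widehat{u}$ and $u$ ensuring that the rescalings of $\widehat{g}$ behave compatibly with those of ${\mathcal E}$), we can pass to a subsequential limit of the rescalings ${\mathcal E}_{t_i}$ in the pointed étale groupoid sense. The hypothesis that we are not in the situations of Theorems \ref{thm4} or \ref{thm5} guarantees that the orbit space of this limit remains two-dimensional and does not collapse further, so the conformal reduction persists in the limit, and the limiting monotonic quantity is constant in time.

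Finally, I would extract the rigidity consequence: constancy forces $\widehat{g}$ to be the $(2+1)$-dimensional Milne spacetime (a Lorentzian cone over $\mathbb H^2$), $G^\infty$ to be constant, and the curvature of $A^\infty$ to vanish. Undoing the conformal modification (\ref{2.34}) and reassembling the $(3+1)$-dimensional groupoid flow from $(\widehat{g}, G^\infty, A^\infty)$ produces, locally, the product of $\R$ with the $(2+1)$-dimensional Milne spacetime, which is exactly the Bianchi-III flat model. The main obstacle I anticipate is the rigidity step: one must verify that the equality case of the chosen monotonic quantity, now including the dilaton and $1$-form contributions, really does pin down Bianchi-III flat and excludes, for instance, a Kasner-type degeneration or a limit in which $G^\infty$ is not literally constant but only $\widehat{u}$-independent on each local chart. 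Checking that the local symmetry data of the étale groupoid limit assemble into the single $\R$-factor of Bianchi-III flat, rather than a twisted bundle, is the other delicate point.
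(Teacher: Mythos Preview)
Your plan is correct and matches the paper's proof of Proposition \ref{2.44} in all essentials: reduce to the $(2{+}1)$-dimensional system via (\ref{2.34}), use the CMC foliation of Assumption \ref{2.42} to invoke the monotone quantity of Subsection \ref{subsectA.4}, pass to a further rescaling limit on which it is constant, and read off Bianchi-III flat from the rigidity case. Two points deserve sharpening.

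First, the monotone quantity is simply $(-\widehat{H})^2 \vol(\widehat{X}^\infty,\widehat{h})$, not an augmented version; the dilaton and curvature contributions appear on the right-hand side of the dissipation identity (\ref{A.51}) (and in the energy interpretation (\ref{A.52})), not as corrections to the functional itself. The rigidity analysis for $n=2$ is in Subsubsection \ref{subsubsectA.3.3}: constancy forces $K^0 = S_0 = \partial_{\widehat u}\ln\det\widehat G = F = 0$, after which $\widehat G$ is harmonic hence constant on the compact slice, and the base is a Lorentzian cone over a hyperbolic orbifold, so your anticipated obstacles do not arise.

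Second, and more substantively, the theorem asks for a sequence $t'_j\to\infty$ with ${\mathcal E}_{t'_j}$ (rescalings of the \emph{original} flow) converging to Bianchi-III flat, whereas the monotone quantity lives only on the limit flow ${\mathcal E}^\infty$. The paper bridges this via a diagonal argument you do not make explicit: rescale ${\mathcal E}^\infty$ by $s_j\to\infty$ to get ${\mathcal E}^\infty_{s_j}$; since $(-\widehat{H})^2\vol$ is scale-invariant (precisely because $\dim\widehat{X}^\infty=2$) and monotone, any subsequential limit ${\mathcal E}^{\infty,\infty}$ has this quantity constant; then, using that ${\mathcal E}^\infty=\lim_i{\mathcal E}_{t_i}$, choose $t_{i_j}$ so that ${\mathcal E}_{s_j t_{i_j}}$ has the same limit ${\mathcal E}^{\infty,\infty}$, and set $t'_j=s_j t_{i_j}$. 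Assumption \ref{2.42} is used here to ensure the rescaled $\widehat u$-functions also converge. Your finite-variation argument on ${\mathcal E}^\infty$ is a valid alternative to the scale-invariance observation, but you still need this diagonal step to return to rescalings of ${\mathcal E}$.
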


Finally, if a limit flow has an orbit space of dimension three
then the rescalings of ${\mathcal E}$ approach a spatially compact quotient of
the Milne spacetime (Proposition \ref{1.57}).

We made some additional assumptions in Theorems \ref{thm5} and \ref{thm6}.
There is some flexibility in the precise assumptions to make.
Under weaker assumptions, one can
prove integral convergence results (Propositions \ref{2.20}, \ref{2.38} and
\ref{2.40}). We need some assumptions to apply the monotonicity results
of Appendix \ref{sectA}, which are an ingredient in our description of the
future behavior of ${\mathcal E}^\infty$. Any other way to describe
the future behavior would also work.

\subsubsection{Type-II Einstein flows}

The type-III condition is generally not stable under perturbation
\cite{Ringstrom (2006),Ringstrom (2015)}. Hence it is relevant to obtain
information about expanding CMC Einstein flows that are not type-III.
Following Ricci flow terminology, we call them type-IIb Einstein flows.
Given such an Einstein flow ${\mathcal E}$ and
a time $\widehat{t}$, let $x_{\widehat{t}}$ be a point on the 
time-$\widehat{t}$ slice where
$|\Rm|_T$ is maximized. One can rescale the Einstein flow by
$|\Rm|_T(x_{\widehat{t}},\widehat{t})$ and shift the time parameter so
that the new flow has $|\Rm|_T$ maximized by one on the time-$0$ slice.
With an appropriate choice of parameters
$\{\widehat{t}_i\}_{i=1}^\infty$ tending to infinity, these
pointed rescaled flows converge to 
an Einstein flow ${\mathcal E}^\infty$.
It exists for all times $u \in \R$, possibly on an \'etale groupoid.

\begin{theorem} \label{thm7} (Corollary \ref{3.6})
When $n=3$, if the type-IIb Einstein flow ${\mathcal E}$ 
has its second fundamental form $K$ controlled by the mean curvature $H$, 
then ${\mathcal E}^\infty$ is a static flat Einstein flow.
\end{theorem}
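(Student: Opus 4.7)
The plan is to use the standard curvature rescaling construction. Choose points $x_{\widehat{t}_i}\in X$ where $|\Rm|_T(\cdot,\widehat{t}_i)$ is (almost) maximized on the time-$\widehat{t}_i$ slice, set $Q_i=|\Rm|_T(x_{\widehat{t}_i},\widehat{t}_i)$, and rescale the spacetime metric by $Q_i$ while recentering time at $\widehat{t}_i$. The type-IIb hypothesis says that $t^2|\Rm|_T$ is not uniformly bounded, so by a Hamilton-style point picking argument applied over longer and longer future time intervals one can arrange that $Q_i\widehat{t}_i^{\,2}\to\infty$. Combining this with the rescaled analogue of Theorem \ref{thm3}, the pointed rescalings subconverge (in weak $W^{2,p}$ and $C^{1,\alpha}$) to an \emph{eternal} Einstein flow ${\mathcal E}^\infty$ on a pointed $3$-dimensional \'etale groupoid, with $|\Rm|_T^\infty\le 1$ and equality at the basepoint on the $u=0$ slice.

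The key computation is what happens to the CMC data under this rescaling. Since $H$ and $K$ both scale as $Q_i^{-1/2}$, the rescaled mean curvature on the time-$u$ slice is
\[
H_i'(u)=-\frac{n}{\sqrt{Q_i}\,\widehat{t}_i+u},
\]
which tends to $0$ pointwise in $u\in\R$ because $\sqrt{Q_i}\,\widehat{t}_i\to\infty$. Hence $H^\infty\equiv 0$. The hypothesis that $K$ is controlled by $H$ is a scale-invariant pointwise bound of the form $|K|\le c\,|H|$, so it passes to the limit and forces $K^\infty\equiv 0$ on every slice.

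With $K^\infty\equiv 0$, the defining relation $\partial_u h=-2L^\infty K^\infty$ shows that the spatial metric of ${\mathcal E}^\infty$ is $u$-independent, so the limit is static. The Hamiltonian constraint $R(h^\infty)+(H^\infty)^2-|K^\infty|^2=0$ collapses to $R(h^\infty)=0$. Differentiating $K^\infty=0$ in the evolution equation for $K$ in vacuum gives
\[
0=\partial_u K_{ij}=-\nabla_i\nabla_j L^\infty+L^\infty R_{ij}(h^\infty),
\]
so $L^\infty R_{ij}(h^\infty)=\nabla_i\nabla_j L^\infty$; tracing yields $\Delta L^\infty=L^\infty R(h^\infty)=0$.

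It remains to upgrade $R(h^\infty)=0$ to vanishing of the full spatial Riemann tensor (which in dimension three is equivalent to $\Ric(h^\infty)=0$). The plan is to observe that on an \'etale groupoid limit with compact orbit space, the positive harmonic function $L^\infty$ must be locally constant, which via the static vacuum equation above gives $\Ric(h^\infty)=0$ and hence $\Riem(h^\infty)=0$; in the noncompact case one uses that $L^\infty$ is bounded (it comes from a controlled family of lapses for the CMC gauge) together with a maximum principle adapted to the groupoid and the bounded-geometry estimate $|\Rm|_T^\infty\le 1$. Together with staticity and $L^\infty$ locally constant, this identifies ${\mathcal E}^\infty$ as locally flat Minkowski, i.e.\ a static flat Einstein flow. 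The main obstacle is precisely this last step, since eliminating a nontrivial harmonic lapse in the possibly noncompact groupoid limit is what pins down flatness as opposed to merely $R=0$; everything earlier is a direct consequence of the rescaling and the scale-invariant $K\lesssim H$ bound.
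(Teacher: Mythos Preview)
Your rescaling setup, the conclusion $H^\infty\equiv 0$, the passage from $|K|\le c|H|$ to $K^\infty\equiv 0$, and the derivation of the static vacuum system $L^\infty R_{ij}^\infty = L^\infty_{;ij}$, $\Delta_{h^\infty}L^\infty = 0$ all match the paper. The gap is exactly where you say it is, and your proposed fix does not close it. A bounded positive harmonic function on a complete noncompact space with merely bounded curvature need not be constant (hyperbolic space has many), so ``maximum principle plus $|\Rm|_T^\infty\le 1$'' is not enough. You also have no reason to expect the orbit space to be compact in the type-IIb blowdown; the diameter bound used elsewhere in the paper is a type-III assumption.

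The paper resolves this with a specific trick due to Anderson: form the product groupoid ${\mathcal Y}^\infty = {\mathcal X}^\infty \times S^1$ with the warped metric $h^\infty + (L^\infty)^2\,d\theta^2$. The static vacuum equations are precisely the statement that this metric is Ricci-flat. One checks that $\log L^\infty$ is harmonic with respect to this product metric, and $\log L^\infty$ is bounded because $L^\infty\le 1$ (from the CMC lapse equation) and $L^\infty$ is bounded below by a positive constant (this is where the $|K|^2\le C H^2$ hypothesis enters, via the lower bound on the lapse in Proposition~\ref{1.52}/\ref{2.2}). Now Yau's Liouville theorem for bounded harmonic functions on complete spaces with $\Ric\ge 0$, extended to the groupoid setting, forces $L^\infty$ constant, hence $\Ric(h^\infty)=0$, hence flat in dimension three. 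A minor side remark: your assertion that $|\Rm|_T^\infty=1$ at the basepoint is not justified, since the convergence is only weak $W^{2,p}$ and curvature need not converge pointwise; indeed the paper's point is that the limit is flat despite the normalization.
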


Theorem \ref{thm7} applies to the locally homogeneous
examples in \cite{Ringstrom (2006)}.
The theorem may sound paradoxical, because the
rescaled flows have $|\Rm|_T$ equal to one at their basepoints,
whereas the limit flow is flat.  The point is that the metrics converge
in the {\em weak} $W^{2,p}$-topology.  This is not enough to give
pointwise convergence of the curvature norm, even in the locally
homogeneous case.  The interpretation is that
the type-IIb Einstein solution has
increasing fluctuations of the curvature tensor, at least near points of
maximal curvature, that average it
out to zero; c.f. Corollary \ref{3.7}.  
We do however have convergence to the flat metric
in the $C^{1,\alpha}$-topology for any $\alpha \in (0,1)$.

\subsection{Comparison with Ricci flow}

One can compare 
expanding CMC Einstein flows, on compact three dimensional manifolds,
to immortal Ricci flows on compact three dimensional manifolds. 
(A Ricci flow is immortal if it exists for $t \in [0, \infty)$.) There are
some common features.
\begin{enumerate}
\item There is a natural rescaling, and hence notions
of type-III and type-IIb solutions.
\item There is a notion of a self-similar solution.
For Ricci flow, this is a Ricci soliton.
For Einstein flow, this is a Lorentzian metric with a timelike
homothetic vector field. 
\item There is a classification of homogeneous self-similar solutions
for the contractible Thurston geometries.
The geometries $\R^3$, $H^3$, $H^2 \times \R$, $Nil$ and $Sol$ admit
self-similar Ricci flow solutions.
The geometries $\R^3$, $H^3$ and $H^2 \times \R$ admit 
self-similar Einstein flow solutions.
\item The normalized volume form is nonincreasing.
\item Type-III Ricci flows with a scale invariant {\it a priori} 
diameter bound become increasingly homogeneous \cite{Lott (2010)}.
The same is true for type-III Einstein flows with a 
scale invariant {\it a priori} diameter bound, at least to the
extent proven in this paper.
\end{enumerate}

On the other hand, there are important differences.

\begin{enumerate}
\item As a weakly parabolic flow, the Ricci flow is smoothing (in the
right coordinates), as seen by Shi's local derivative estimates. In
particular, this allows one to take smooth limits.  On the other hand,
when taking limits of Einstein flows, one cannot expect the limits
to be much better than $W^{2,p}$-regular.
\item An immortal three dimensional Ricci flow 
is always type-III \cite{Bamler (2014)}.
Expanding CMC Einstein flows need not be type-III.
\item Given a Thurston type, if there is a homogeneous expanding
Ricci soliton with that geometry then it is unique. The analogous statement
is not true for Einstein flows, as the Kasner solutions all have
Thurston type $\R^3$.
\item Considering immortal homogeneous Ricci flows, 
there is a single transmutation: under the Ricci flow, a homogeneous
$\widetilde{\SL(2, \R)}$ geometry has a rescaling 
limit with $H^2 \times \R$ geometry
\cite{Lott (2007)}.
On the other hand, there are three transmutations for type-III homogeneous 
Einstein flows: 
a homogeneous $\widetilde{\SL(2, \R)}$ geometry has a rescaling limit with 
$H^2 \times \R$ geometry, and a homogeneous $Nil$ or $Sol$ geometry has
a rescaling limit with $\R^3$ geometry.
\end{enumerate}

On a technical level, in \cite{Lott (2010)} we showed that any
type-III Ricci flow, with a scale invariant {\it a priori} diameter bound,
becomes increasingly homogeneous as time increases.  In the present paper
we only show that there are large future time intervals on which the Einstein
flow becomes increasingly homogeneous.  The reason for the stronger
conclusion in \cite{Lott (2010)} is that we had unconditional results
for the long-time behavior of the limit Ricci flows, and hence could apply
contradiction arguments to get uniform statements about
the long-time behavior of the
original Ricci flow.  In the present paper, Assumptions
\ref{2.23} and \ref{2.42} are needed in order
to characterize the future behavior of the
limit Einstein flows. Because of this, we cannot apply contradiction
arguments to get uniform statements about the future behavior of the
original Einstein flow.

\subsection{Structure of the paper}

Section \ref{sect1} is about noncollapsed expanding CMC 
Einstein flows, first without
any {\it a priori} curvature assumptions and then with a type-III
curvature assumption. Section \ref{sect2} concerns collapsing
type-III Einstein flows.  Section \ref{sect3} is about type-IIb Einstein
flows.  More detailed descriptions are at the beginnings of the sections.

Appendix \ref{sectA} has monotonicity formulas for expanding CMC
$n$-dimensional Einstein flows with a local $\R^N$-symmetry.  
When $n=3$, 
the monotonic quantities largely reduce to those considered in 
\cite{Berger-Chrusciel-Isenberg-Moncrief (1997)}, 
\cite{Choquet-Bruhat (2004)} and \cite{Choquet-Bruhat-Moncrief (2001)}.
We work in the more general setting partly because, in our opinion, the
derivations become clearer and simpler there. 

I thank Mike Anderson and Jim Isenberg for helpful discussions.
I also thank Mike for comments on an earlier version of this paper.

\subsection{Conventions}

Convergence in $W^{k,p}$ will mean convergence for all $p < \infty$.
Convergence in $C^{k,\alpha}$ will mean convergence for all
$\alpha \in (0,1)$. We will use the Einstein summation convention freely.

\section{Noncollapsed Einstein flows} \label{sect1}

In this section we give results about Einstein flows with a 
scale invariant lower volume bound.
Subsection \ref{subsect1.1} gives the definitions of Einstein flow,
CMC Einstein flow and expanding CMC Einstein flow. We then recall the
monotonicity of normalized volume from \cite{Fischer-Moncrief (2002)}.

In Subsection \ref{subsect1.2} we consider expanding CMC Einstein flows
with compact spacelike hypersurfaces, but no {\it a priori} curvature bounds.
We show that in an integral sense, relative to the limiting normalized
volume form, for large time
the rescaled flow is asymptotically scale invariant.  

Subsection \ref{subsect1.3} is about long-time results for
noncollapsed type-III expanding CMC Einstein flows, due largely to
Anderson \cite{Anderson (2001)}. We give 
relevant notions of convergence
of a sequence of Einstein flows. We define the type-III condition and show
that with a lower volume bound and an upper diameter bound, one gets
convergence (after rescaling) to the space of Lorentzian cones over 
Riemannian Einstein manifolds with Einstein constant $-(n-1)$. 
The rest of the subsection is devoted to what one can say without
the upper diameter bound.

More detailed descriptions are at the beginnings of the subsections.

\subsection{Volume monotonicity} \label{subsect1.1}

\begin{definition} \label{1.1}
Let $I$ be an interval in $\R$. 
An Einstein flow
${\mathcal E}$ on an $n$-dimensional manifold $X$ is given by a
family of nonnegative functions $\{L(t)\}_{t \in I}$ on $X$,
a family of Riemannian metrics $\{h(t)\}_{t \in I}$ on $X$, and a family
of symmetric covariant $2$-tensor fields $\{K(t)\}_{t \in I}$ on $X$,
so that if
$H = h^{ij} K_{ij}$ and $K^0 = K - \frac{H}{n} h$ then 
the constraint equations
\begin{equation} \label{1.2}
R - |K^0|^2 + \left( 1 - \frac{1}{n} \right) H^2 =  0
\end{equation}
and
\begin{equation} \label{1.3}
\nabla_i K^i_{\: \: j} - \nabla_j H =  0,
\end{equation}
are satisfied, along with the evolution equations
\begin{equation} \label{1.4}
\frac{\partial h_{ij}}{\partial t} = - 2 L K_{ij}
\end{equation}
and
\begin{equation} \label{1.5}
\frac{\partial K_{ij}}{\partial t} =  L H K_{ij} - 2 L
h^{kl} K_{ik} K_{lj} - L_{;ij} + L R_{ij}.
\end{equation}
\end{definition}

For now, we will assume that $X$ is compact and connected, and
that all of the data
is smooth.
At the moment, $L$ is unconstrained; it will be determined by
the elliptic equation (\ref{1.13}) below.
We will generally want $L(t)$ to be positive. 

An Einstein flow gives rise to a Ricci-flat Lorentzian metric
\begin{equation} \label{1.6}
g = - L^2 dt^2 + h(t)
\end{equation}
on $I \times X$, for which the second fundamental form of the
time-$t$ slice is $K(t)$. Conversely, given a 
Lorentzian metric $g$ on a manifold with a proper time function $t$,
we can write it in the
form (\ref{1.6}) by using the flow of $\frac{\nabla t}{|\nabla t|^2}$ to
identify nearby leaves.
Letting $K(t)$ be the second fundamental form of 
the time-$t$ slice, the metric $g$ is Ricci-flat if and only if
$(L,h,K)$ is an Einstein flow.

\begin{definition} \label{1.7}
A CMC Einstein flow is an Einstein flow 
for which $H$ only depends on $t$.
It is expanding if $I = [t_0, \infty)$ 
(or $I = (t_0, \infty)$), $H$ is
monotonically increasing in $t$ and takes all values in
$[H_0, 0)$ for some $H_0 < 0$.
\end{definition}

We digress to briefly discuss scale invariant expanding CMC
Einstein flows.
We say that this is the case if $I = (0, \infty)$ and
\begin{equation} \label{1.8}
L = 1, \: \: \: h(ct) = c^2 h(t)
\end{equation}
for all $c > 0$.
Then from (\ref{1.4}),
\begin{equation} \label{1.9}
K_{ij} = - \: t h(1)_{ij} = - \: \frac{1}{t} h_{ij}.
\end{equation}

\begin{lemma} \label{1.10}
Equation (\ref{1.8}) is equivalent to 
\begin{equation} \label{1.11}
L=1, \: \: \:
H = - \: \frac{n}{t}, \: \: \: K^0 = 0.
\end{equation}
In this case,
equations (\ref{1.2})-(\ref{1.5}) are satisfied if and only if
${\mathcal E}$ is a Lorentzian cone over a Riemanniann Einstein manifold
with Einstein constant $-(n-1)$, i.e.
\begin{equation} \label{1.12}
g = - dt^2 + t^2 h_{Ein},
\end{equation}
where $h_{Ein}$ is a Einstein metric on $X$ with Einstein constant 
$- (n-1)$.
\end{lemma}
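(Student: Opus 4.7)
The plan is to handle the two claims separately, each by a short direct computation.

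For the equivalence of (\ref{1.8}) and (\ref{1.11}): assume (\ref{1.8}). Differentiating $h(ct)_{ij} = c^2 h(t)_{ij}$ at $c=1$ gives $t \partial_t h_{ij} = 2 h_{ij}$, so by (\ref{1.4}) with $L=1$ we obtain $K_{ij} = -h_{ij}/t$. Taking the trace yields $H = -n/t$, and then $K^0_{ij} = K_{ij} - \frac{H}{n} h_{ij} = 0$. Conversely, if $L=1$, $H=-n/t$ and $K^0=0$, then $K_{ij} = \frac{H}{n} h_{ij} = -h_{ij}/t$, and (\ref{1.4}) becomes the ODE $\partial_t h_{ij} = (2/t)\, h_{ij}$, whose solution is $h(t) = t^2 h(1)$; this scales as required.

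For the second part, assume (\ref{1.11}) and set $h_{Ein} = h(1)$, so $h(t) = t^2 h_{Ein}$ and $K_{ij} = -h_{ij}/t$. The evolution equation (\ref{1.4}) and the momentum constraint (\ref{1.3}) are automatic (the latter because $K_{ij}$ is a spatially constant multiple of $h_{ij}$ and $H$ depends only on $t$). The Hamiltonian constraint (\ref{1.2}) reduces to $R(h(t)) = -n(n-1)/t^2$; since the scalar curvature scales as $R(h(t)) = t^{-2} R(h_{Ein})$, this is equivalent to $R(h_{Ein}) = -n(n-1)$. The evolution equation (\ref{1.5}) is the only place the full Einstein condition comes in: plugging in $L=1$, $L_{;ij} = 0$, $\partial_t K_{ij} = -h_{ij}/t^2$, and $h^{kl}K_{ik}K_{lj} = h_{ij}/t^2$ gives, after cancellation,
\begin{equation*}
R_{ij}(h(t)) = -(n-1)\, h_{ij}/t^2.
\end{equation*}
Because the Ricci tensor is invariant under constant rescalings of the metric, $R_{ij}(h(t)) = R_{ij}(h_{Ein})$, so the equation is equivalent to $\operatorname{Ric}(h_{Ein}) = -(n-1) h_{Ein}$. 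This Einstein condition implies the scalar curvature identity required by (\ref{1.2}), so it is the single content of the system. Conversely, starting from any Einstein $h_{Ein}$ with Einstein constant $-(n-1)$, the metric $g = -dt^2 + t^2 h_{Ein}$ produces data satisfying all of (\ref{1.2})--(\ref{1.5}) by reversing the above.

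There is no serious obstacle here; the only subtle point to keep clear in the write-up is the distinction between the time-dependent tensor $R_{ij}(h(t))$ and $R_{ij}(h_{Ein})$, using the fact that Ricci is scale-invariant while scalar curvature picks up a factor of $t^{-2}$. Once these scaling conventions are set, the Hamiltonian constraint and the trace-free part of (\ref{1.5}) collapse onto the single Einstein equation for $h_{Ein}$.
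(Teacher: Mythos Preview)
Your proof is correct and follows essentially the same route as the paper, which merely asserts that the equivalence of (\ref{1.8}) and (\ref{1.11}) is ``straightforward'' and then extracts $R_{ij} = -(n-1) h_{ij}/t^2$ from (\ref{1.5}); you have simply written out the computations in full, including the observation that the Einstein condition subsumes the scalar curvature constraint (\ref{1.2}).
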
 
\begin{proof}
The equivalence of (\ref{1.8}) and (\ref{1.11}) is straightforward.
If (\ref{1.12}) holds then it is easy to see that 
equations (\ref{1.2})-(\ref{1.5}) are
satisfied.  Conversely, if (\ref{1.2})-(\ref{1.5}) are satisfied then 
(\ref{1.11}) implies that  $R_{ij} = \: - (n-1) h(1)_{ij}
= \: - \frac{n-1}{t^2} h_{ij}$. 
\end{proof}

There is a more general notion of self-similarity for a vacuum Einstein
solution, namely having a future-directed homothetic Killing vector field $V$.
This means, in the expanding case, that ${\mathcal L}_V g = 2g$.
If there is a compact spacelike hypersurface $X$ of constant mean curvature 
then $g$ must be a Lorentzian cone over a Riemannian Einstein manifold
with Einstein constant $-(n-1)$; see 
\cite{Eardley-Isenberg-Marsden-Moncrief (1986)} for the case $n=3$. 
As mentioned in the introduction, 
if $X$ is noncompact then there are other possibilities.

Returning to general expanding CMC Einstein flows,
equation (\ref{1.5}) gives
\begin{align} \label{1.13}
\frac{\partial H}{\partial t} = & - \triangle_h L + LH^2
+ LR \\
= & - \triangle_h L + L |K^0|^2 + \frac{1}{n} LH^2. \notag
\end{align}
The maximum principle gives
\begin{equation} \label{1.14}
\frac{1}{\sup_X |K(t)|^2} \frac{\partial H}{\partial t} \le L(t) 
\le \frac{n}{H^2} \frac{\partial H}{\partial t}.
\end{equation}

We note in passing that if $n>1$ then (\ref{1.2}) gives a formula
for the normalized volume, as
\begin{equation*}
(-H)^n \vol(X, h(t)) =
\frac{n}{n-1} (-H)^{n-2} \int_X \left( - R^h + |K^0|^2 \right)
\dvol(X, h(t)). 
\end{equation*}

\begin{proposition} \label{1.15} \cite{Fischer-Moncrief (2002)}
Let ${\mathcal E}$ be an expanding CMC Einstein flow.
The quantity $(-H)^n \vol(X,h(t))$ is monotonically nonincreasing
in $t$. It is constant in $t$ if and only if, taking
$t = - \: \frac{n}{H}$, the Einstein flow ${\mathcal E}$ is
a Lorentzian cone over a Riemannian Einstein manifold with
Einstein constant $-(n-1)$.
\end{proposition}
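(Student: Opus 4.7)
The plan is to differentiate $V(t) = (-H)^n \vol(X,h(t))$ directly. Two ingredients are needed. First, the trace of (\ref{1.4}) gives $\partial_t \dvol = -LH \, \dvol$, so since $H$ is spatially constant,
$$\frac{d}{dt}\vol(X,h(t)) = -H \int_X L \, \dvol.$$
Second, integrating (\ref{1.13}) over the closed manifold $X$ kills the Laplacian term and yields
$$\dot H \cdot \vol(X,h(t)) = \int_X L|K^0|^2 \, \dvol + \frac{H^2}{n} \int_X L \, \dvol.$$

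Combining these via the product rule, and using $(-H)^{n-1} H^2 = (-H)^{n+1}$, the two $\int_X L\,\dvol$ contributions cancel and one is left with
$$\frac{d}{dt}\bigl[(-H)^n \vol(X,h(t))\bigr] = -n(-H)^{n-1} \int_X L|K^0|^2 \, \dvol \le 0,$$
which gives the monotonicity. Since $L > 0$ in the setup, equality along an interval in $t$ forces $K^0 \equiv 0$ on $X$ throughout that interval.

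To identify the equality case, I would then propagate $K^0 \equiv 0$ through the remaining field equations. The Hamiltonian constraint (\ref{1.2}) becomes $R = -\frac{n-1}{n} H^2$. Equation (\ref{1.13}) forces $\triangle_h L$ to be spatially constant, hence zero by integration over $X$, so $L$ is spatially constant on the connected $X$. Equation (\ref{1.4}) then reduces to $\partial_t h_{ij} = -\frac{2LH}{n} h_{ij}$, so $h(t)$ is pointwise conformal to a fixed metric $h_0$ with a purely time-dependent factor. Reparametrizing by the Hubble time $t = -n/H$ normalizes the lapse to $L \equiv 1$, and integrating the ODE for the conformal factor gives $h(t) = t^2 h_0$ after absorbing an overall constant into $h_0$. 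Plugging the resulting ansatz into the $K$-evolution (\ref{1.5}) and matching coefficients of $h_{ij}$ produces $R_{ij}(h_0) = -(n-1) h_{0,ij}$, so $h_0$ is Einstein with Einstein constant $-(n-1)$, and $g = -dt^2 + t^2 h_0$ is the asserted Lorentzian cone.

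The monotonicity half is essentially bookkeeping: the cancellation producing the manifestly nonpositive $-n(-H)^{n-1}\int_X L|K^0|^2 \, \dvol$ term is automatic once (\ref{1.13}) is integrated. The main obstacle is the equality clause, where each of (\ref{1.2}), (\ref{1.4}), (\ref{1.5}) and (\ref{1.13}) must be revisited in turn in order to upgrade the pointwise vanishing $K^0 = 0$ first to rigidity of the lapse, then to conformal rigidity of $h(t)$, and finally to the Einstein condition on the cross-section.
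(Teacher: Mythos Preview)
Your proof is correct and follows essentially the same path as the paper's: both obtain the identity
\[
\frac{d}{dt}\bigl[(-H)^n\vol(X,h)\bigr] = -n(-H)^{n-1}\int_X L|K^0|^2\,\dvol
\]
(the paper's (\ref{1.17})) by combining the volume evolution with the integrated lapse equation, and both unwind the equality case by propagating $K^0=0$ through (\ref{1.13}), (\ref{1.4}), (\ref{1.5}). One small wording slip in your equality argument: with $K^0=0$, equation (\ref{1.13}) says that $-\triangle_h L + \tfrac{H^2}{n}L$ equals the spatial constant $\dot H$, not that $\triangle_h L$ itself is spatially constant; the conclusion that $L$ is constant then follows because this is a linear elliptic equation with positive zeroth-order coefficient on a closed manifold (multiply by $L - n\dot H/H^2$ and integrate, or apply the maximum principle). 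The paper handles this step by first passing to Hubble time $t=-n/H$ and then reading off $L=1$ from the same elliptic equation.
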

\begin{proof}
As in \cite{Fischer-Moncrief (2002)},
using (\ref{1.4}) we have the pointwise identity
\begin{equation} \label{1.16}
\frac{\partial}{\partial t} \left( (-H)^n \dvol(X, h) \right) =
(-H)^{n+1} \left( L -  \frac{n}{H^2} \frac{\partial H}{\partial t}
\right) \: \dvol(X,h).
\end{equation}
From (\ref{1.14}),
it follows that $(-H)^n \dvol(X,h(t))$ is pointwise 
monotonically nonincreasing
in $t$, and hence $(-H)^n \vol(X,h(t))$ is monotonically nonincreasing in $t$. 
Alternatively, applying (\ref{1.13}) to (\ref{1.16}) gives
\begin{equation} \label{1.17}
\frac{d}{dt} \left( (-H)^n \vol(X, h) \right) =
- \: n (-H)^{n-1} \int_X |K^0|^2 L \: \dvol(X,h).
\end{equation}
 If it is constant in $t$ then $K^0 = 0$. Taking
$t = - \: \frac{n}{H}$, equation
(\ref{1.13}) gives $L = 1$.
As $K_{ij} = \frac{1}{n} H h_{ij} = - \frac{h_{ij}}{t}$, equation
(\ref{1.4}) gives $h_{ij}(t) = t^2 h_{ij}(1)$.
Equation (\ref{1.5}) gives
$R_{ij} = - \frac{n-1}{t^2} h_{ij}$. The proposition follows. 
\end{proof}

\begin{remark} \label{1.18}
Proposition \ref{1.15} remains valid
if $L$ and $h$ are locally $W^{2,p}$-regular in spacetime, and
$K$ is locally $W^{1,p}$-regular in spacetime.
It is also valid for an expanding CMC Einstein flow with complete finite volume
time slices, provided that $L$, $K$ and the curvature of $h$ are
bounded on compact time intervals.
\end{remark}

\subsection{Expanding CMC Einstein flows without 
{\it a priori} bounds} \label{subsect1.2}

In this subsection we show that in an integral sense, for large time
the rescaled Einstein flow is asymptotically scale invariant.

To motivate the result of the subsection, let us mention some 
properties
of a scale invariant solution in the sense of Lemma \ref{1.10}:
\begin{enumerate} \label{1.18.5}
\item $t^{-n} \dvol(X, h(t))$ is constant in $t$,
\item $t^{- 2} h(t)$ is constant in $t$,
\item $L-1 = 0$,
\item $K^0 = 0$ and
\item $R + \frac{n(n-1)}{t^2} = 0$.
\end{enumerate}

An expanding CMC Einstein flow has a limiting normalized volume measure
$\dvol_\infty$; see
equation (\ref{1.22}) below. 
The results of this subsection will be true but vacuous
if $\dvol_\infty = 0$. Hence the results are only meaningful in the
noncollapsing case.

We will introduce the rescaling of a expanding 
CMC Einstein flow by a parameter $s > 1$, to obtain a new expanding
CMC Einstein flow. Using the monotone quantity
from Subsection \ref{subsect1.1}, we show that on any fixed time interval
$[\Lambda^{-1}, \Lambda]$, the properties in (\ref{1.18.5}) are asymptotically
true for large $s$. More precisely, properties (1) 
and (2) hold asymptotically with respect to the spatial measure
$\dvol_\infty$, while properties (3), (4), (5) hold asymptotically
with respect to the spacetime measure $du \: \dvol_\infty$. 

We essentially show $C^0$-closeness of the rescaled flows to
a scale invariant flow (relative to $\dvol_\infty$) by showing
that properties (1), (2) and (3) hold asymptotically. To consider
a stronger statement, 
Lemma \ref{1.10} says that a scale invariant Einstein flow in the 
sense of (\ref{1.8}) has time slices with Ricci curvature
$- \frac{n-1}{t^2} h(t)$. It is conceivable
that some weak form of this statement holds asymptotically in an
integral sense.  We do show that the corresponding statement about
scalar curvature, i.e. property (5), holds asymptotically.

To begin, taking $t = - \frac{n}{H}$,
from (\ref{1.16}) the measures $\{t^{-n} \dvol(X, h(t)) \}_{t \ge t_0}$
are pointwise nonincreasing in $t$. They are all absolutely
continuous with respect to some arbitrary smooth Riemannian measure on $X$,
and their $L^1$-densities are pointwise nonincreasing. Put
\begin{equation} \label{1.22}
\dvol_\infty = \lim_{t \rightarrow \infty} \frac{\dvol(X, h(t))}{t^n},
\end{equation}
a nonnegative absolutely continuous measure on $X$.

We give a sufficient condition for $\dvol_\infty$ to be nonzero.
From (\ref{1.2}),
\begin{equation} \label{1.23}
t^2 R + n(n-1) = |tK^0|^2. 
\end{equation}
Hence $R \ge - \: \frac{n(n-1)}{t^2}$ and letting $g_X$ range over
all Riemannian metrics on $X$, we have
\begin{align} \label{1.24}
t^{-n} \: \vol(X, h(t)) \ge &
\inf_{g_X} \left\{ t^{-n} \: \vol(X, g_X) \: : \:
R(g_X) \ge - \: \frac{n(n-1)}{t^2} \right\} \\
= &  \inf_{g_X} \{ \vol(X, g_X) \: : \:
R(g_X) \ge - \: n(n-1) \}. \notag
\end{align}
It follows that
\begin{equation} \label{1.25}
\int_X \dvol_\infty \ge \inf \{\vol(X, g_X) \: : \:
R(g_X) \ge - \: n(n-1) \}.
\end{equation}
If $X$ has a nonpositive $\sigma$-invariant then we obtain
\begin{equation} \label{1.26}
\int_X \dvol_\infty \ge \left( - \: \frac{\sigma(X)}{n(n-1)} \right)^{\frac{n}{2}},
\end{equation}
as was recognized in \cite{Fischer-Moncrief (2002)}.
In particular, if $\dim(X) = 3$ and $X$ contains a hyperbolic
piece in its Thurston decomposition then $\sigma(X) < 0$ and hence
$\int_X \dvol_\infty > 0$.

The results that follow in this subsection
will be true but vacuous if $\dvol_\infty$
vanishes.

\begin{lemma} \label{1.31}
We have
\begin{equation} \label{1.32}
\lim_{t \rightarrow \infty} \frac{t^{-n} \dvol(X, h(t))}{\dvol_\infty} = 1
\end{equation}
in $L^1(\supp(\dvol_\infty); \dvol_\infty)$.
\end{lemma}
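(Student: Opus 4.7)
The plan is to reduce the claim to an application of the monotone/dominated convergence theorem using the pointwise monotonicity coming from the Fischer-Moncrief identity (\ref{1.16}).

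First, I would fix once and for all an auxiliary smooth Riemannian measure on $X$, say $\dvol_0 = \dvol(X, h(t_0))$, and write every measure of interest as $f \cdot \dvol_0$. By (\ref{1.14}) and (\ref{1.16}), the pointwise identity
\[
\frac{\partial}{\partial t}\bigl((-H)^n \dvol(X, h(t))\bigr) \le 0
\]
tells me that, with $t = -n/H$, the density $f_t$ of $t^{-n}\dvol(X,h(t))$ with respect to $\dvol_0$ is nonincreasing in $t$, bounded above by $f_{t_0}$, and converges pointwise to the density $f_\infty$ of $\dvol_\infty$ that appears in (\ref{1.22}).

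Next I would pass from pointwise to $L^1$ convergence. By dominated convergence with dominating function $f_{t_0} \in L^1(X, \dvol_0)$,
\[
\int_X f_t \, \dvol_0 \;\longrightarrow\; \int_X f_\infty \, \dvol_0,
\]
i.e.\ $t^{-n}\vol(X, h(t)) \to \int_X \dvol_\infty$. Since $f_t - f_\infty \ge 0$ everywhere, this gives
\[
\int_X (f_t - f_\infty) \, \dvol_0 \;\longrightarrow\; 0.
\]
Restricting the integration to the set where $f_\infty > 0$ (a full-measure subset of $\supp(\dvol_\infty)$ up to a $\dvol_\infty$-null set) and rewriting,
\[
\int_{\supp(\dvol_\infty)} \!\left| \frac{t^{-n}\dvol(X, h(t))}{\dvol_\infty} - 1 \right| \dvol_\infty
\;=\; \int_{\{f_\infty > 0\}} \left( \frac{f_t}{f_\infty} - 1 \right) f_\infty \, \dvol_0
\;\le\; \int_X (f_t - f_\infty) \, \dvol_0,
\]
which tends to zero. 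This gives the claim.

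There is no real obstacle; the only thing to be careful about is that the ratio $\frac{t^{-n}\dvol(X,h(t))}{\dvol_\infty}$ a priori only makes sense $\dvol_\infty$-almost everywhere, which is exactly the setting of the $L^1$-space appearing in the statement, and that the dominating $f_{t_0}$ really is integrable, which is automatic since $X$ is compact and $h(t_0)$ is smooth. The monotone decrease of the densities, which is the pointwise version (not merely the integrated version) of Fischer-Moncrief, is the crucial input making the dominated convergence argument available.
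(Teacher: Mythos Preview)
Your proof is correct and follows essentially the same approach as the paper's: both arguments exploit the pointwise monotone decrease of the normalized volume density (from (\ref{1.14}) and (\ref{1.16})) and then invoke a convergence theorem. The paper works directly with the ratio $\frac{t^{-n}\dvol(X,h(t))}{\dvol_\infty} - 1 \ge 0$, which decreases to zero $\dvol_\infty$-a.e., and applies monotone convergence; you instead pass through densities against a fixed reference measure and use dominated convergence, but the content is the same.
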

\begin{proof}
As $t^{-n} \dvol(X, h(t))$ and $\dvol_\infty$ are absolutely continuous on
$X$, the ratio $\frac{t^{-n} \dvol(X, h(t))}{\dvol_\infty}$ is measurable on
$\supp(\dvol_\infty)$. As $\frac{t^{-n} \dvol(X, h(t))}{\dvol_\infty}$ is
monotonically decreasing to $1$ as $t \rightarrow \infty$, the
monotone convergence theorem gives
\begin{equation} \label{1.33}
\lim_{t \rightarrow \infty} \int_X \left| 
\frac{t^{-n} \dvol(X, h(t))}{\dvol_\infty}
- 1 \right| \dvol_\infty = 
\lim_{t \rightarrow \infty} \int_X \left( 
\frac{t^{-n} \dvol(X, h(t))}{\dvol_\infty}
- 1 \right) \dvol_\infty = 0.
\end{equation}
This proves the lemma.
\end{proof}

We now prove some integral inequalities.
From (\ref{1.17}), we have
\begin{align} \label{1.19}
& n \int_{t_0}^\infty (-H)^{n-1} \int_X  |K^0|^2 L \: \dvol(X, h(t)) \: dt = \\
& (-H(t_0))^n \vol(X, h(t_0)) - \lim_{t \rightarrow \infty}
(-H(t))^n \vol(X, h(t)) < \infty. \notag
\end{align}
As
\begin{equation} \label{1.20}
t = - \: \frac{n}{H},
\end{equation}
we obtain
\begin{equation} \label{1.21}
\int_{t_0}^\infty \int_X |tK^0|^2 L \: \frac{\dvol(X, h(t))}{t^n} \: \frac{dt}{t}
< \infty.
\end{equation}
Hence
\begin{equation} \label{1.27}
\int_{t_0}^\infty \int_X |tK^0|^2 L \: \dvol_\infty \: \frac{dt}{t} \le
\int_{t_0}^\infty \int_X |tK^0|^2 L \: \frac{\dvol(X, h(t))}{t^n} \: \frac{dt}{t}
< \infty.
\end{equation}
Using (\ref{1.13}) and (\ref{1.20}), we have
\begin{equation} \label{1.28}
n (1-L) = - t^2 \triangle_h L + |tK^0|^2 L.
\end{equation}
The maximum principle gives $1 - L \ge 0$.
Then
\begin{align} \label{1.29}
\int_{t_0}^\infty \int_X |L-1| \: \dvol_\infty \frac{dt}{t} = &
\int_{t_0}^\infty \int_X (1-L) \: \dvol_\infty \frac{dt}{t} \\
& \le
\int_{t_0}^\infty \int_X (1-L) \: \frac{\dvol(X, h(t))}{t^n} \frac{dt}{t} 
\notag \\ 
= & \frac{1}{n} \int_{t_0}^\infty \int_X |tK^0|^2 \: L
\frac{\dvol(X, h(t))}{t^n} \frac{dt}{t} < \infty. \notag
\end{align}
From (\ref{1.23}) and (\ref{1.27}),
\begin{align} \label{1.30}
\int_{t_0}^\infty \int_X t^2 \left| R + \frac{n(n-1)}{t^2} \right| L \: 
\dvol_\infty \frac{dt}{t} = &
\int_{t_0}^\infty \int_X (t^2 R + n(n-1))L \: \dvol_\infty \frac{dt}{t} \\
\le & \int_{t_0}^\infty \int_X (t^2 R + n(n-1))L \: 
\frac{\dvol(X, h(t))}{t^n} \frac{dt}{t} \notag \\ 
= &  \int_{t_0}^\infty \int_X |tK^0|^2 L \:
\frac{\dvol(X, h(t))}{t^n} \frac{dt}{t} < \infty. \notag
\end{align}

For $s > 1$, the Lorentzian metric $s^{-2} g$ is isometric to
\begin{equation} \label{1.35}
g_s = - L^2(su) du^2 + s^{-2} h(su).
\end{equation}
Hence we put
\begin{align} \label{1.36}
& L_s(u) =  L(su), 
& h_s(u) = s^{-2} h(su), \: \: \: 
& K_{s,ij}(u) = s^{-1} K_{ij}(su),\\ 
& H_s(u) = s H(su),
& K^0_{s,ij}(u) = s^{-1} K^0_{ij}(su), \: \: \: 
& |K^0|^2_{s}(u) = s^2 K_{ij}(su), \notag \\
& R_{s,ij}(u) = R_{ij}(su), 
& R_s(u) = s^2 R(su).
& \notag
\end{align}
The variable $u$ will refer to the time parameter of a rescaled Einstein
flow, or a limit of such.

\begin{proposition} \label{1.37}
Given $\Lambda > 1$, we have
\begin{equation} \label{1.38}
\lim_{s \rightarrow \infty} (L_s-1) =
\lim_{s \rightarrow \infty} |K^0|^2_s L_s =   
\lim_{s \rightarrow \infty} \left| R_s + \frac{n(n-1)}{u^2} \right| L_s = 0
\end{equation}
in $L^1 \left( [\Lambda^{-1}, \Lambda] \times X,
du \dvol_\infty \right)$.
\end{proposition}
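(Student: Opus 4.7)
The plan is to deduce each of the three $L^1$-limits from the three finite spacetime integrals already proved in the subsection, namely the bounds (\ref{1.27}), (\ref{1.29}), and (\ref{1.30}), by performing the change of variable $t = su$ and then invoking the fact that a convergent improper integral has vanishing tails.

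The first observation is that each of the integrands involved in (\ref{1.27}), (\ref{1.29}), (\ref{1.30}) is \emph{scale invariant}: with $t=su$ one checks directly from (\ref{1.36}) that $|tK^0|^2 = u^2 |K^0|^2_s$, that $L = L_s$, and that
\begin{equation*}
t^2 R + n(n-1) \; = \; u^2 R_s + n(n-1) \; = \; u^2 \left( R_s + \frac{n(n-1)}{u^2} \right),
\end{equation*}
while the Haar-type measure factor obeys $\frac{dt}{t} = \frac{du}{u}$. Therefore, after the substitution $t = su$, the finite integrals (\ref{1.27}), (\ref{1.29}), (\ref{1.30}) read
\begin{equation*}
\int_{t_0/s}^\infty \int_X u^2 |K^0|^2_s L_s \, \dvol_\infty \frac{du}{u}, \quad
\int_{t_0/s}^\infty \int_X |L_s-1| \, \dvol_\infty \frac{du}{u}, \quad
\int_{t_0/s}^\infty \int_X u^2 \left| R_s + \frac{n(n-1)}{u^2} \right| L_s \, \dvol_\infty \frac{du}{u},
\end{equation*}
and each of these is simply the value of the original (fixed, finite) integral, so in particular each is uniformly bounded in $s$.

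Next I would isolate the window $u \in [\Lambda^{-1}, \Lambda]$. Since $u$ is bounded above and below by positive constants there, the measure $du$ and the weight $\frac{du}{u}$ are comparable, as are the factors $u^2$ and $1$. Hence for each of the three target integrals I get, up to multiplicative constants depending on $\Lambda$, an upper bound of the form
\begin{equation*}
\int_{\Lambda^{-1}}^\Lambda \int_X F_s(u,x) \, du \, \dvol_\infty
\; \le \; C_\Lambda \int_{s/\Lambda}^{s\Lambda} \int_X \widetilde F(t,x) \, \dvol_\infty \, \frac{dt}{t},
\end{equation*}
where $\widetilde F$ is the corresponding integrand appearing in (\ref{1.27}), (\ref{1.29}), or (\ref{1.30}). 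Since the right-hand integrand is the tail of a convergent improper integral over $(t_0,\infty)$ and the interval $[s/\Lambda, s\Lambda]$ has left endpoint tending to $\infty$, this tail tends to $0$ as $s \to \infty$. This is the only genuine analytic input, and it follows immediately from absolute continuity of the Lebesgue integral applied to the finite integrals of Subsection \ref{subsect1.2}.

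There is no real obstacle here — everything is engineered so that the bookkeeping works out. The only place that requires a small argument is the sign/absolute value for the $L_s - 1$ term: by (\ref{1.28}) and the maximum principle we already know $L \le 1$, so $|L_s - 1| = 1 - L_s$, and (\ref{1.29}) controls exactly this quantity. For the other two terms the integrands are manifestly nonnegative, so no absolute value manipulation is needed. Combining the three tail estimates gives the three $L^1$-limits stated in (\ref{1.38}).
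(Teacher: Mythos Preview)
Your proof is correct and follows essentially the same strategy as the paper: both arguments reduce the $L^1$-convergence on $[\Lambda^{-1},\Lambda]$ to the finiteness of the spacetime integrals (\ref{1.27}), (\ref{1.29}), (\ref{1.30}) via the change of variable $t=su$. The only difference is stylistic: the paper argues by contradiction (assuming a lower bound $\epsilon$ along a sequence $s_i\to\infty$, passing to a subsequence with disjoint intervals $[s_i\Lambda^{-1},s_i\Lambda]$, and summing the contributions to contradict finiteness), whereas you argue directly that the $L^1$-norm over the window is bounded by $C_\Lambda$ times the tail $\int_{s\Lambda^{-1}}^{s\Lambda}$ of a convergent integral, which vanishes as $s\to\infty$; your formulation is slightly more streamlined but the content is the same.
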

\begin{proof}
We prove that $\lim_{s \rightarrow \infty} |K^0|^2_s L_s = 0$. The proofs for
the other statements are similar, using
(\ref{1.29}) and (\ref{1.30}).

Suppose that it is not true that 
$\lim_{s \rightarrow \infty} |K^0|^2_s L_s = 0$
in $L^1 \left( X \times [\Lambda^{-1}, \Lambda],
\dvol_\infty du \right)$. Then there is some $\epsilon > 0$ and a sequence
$\{s_i\}_{i=1}^\infty$ with $\lim_{i \rightarrow \infty} s_i = \infty$
and 
\begin{equation} \label{1.39}
\int_{\Lambda^{-1}}^\Lambda \int_X |K^0|^2_{s_i} L_{s_i} \: \dvol_{\infty} du
\ge \epsilon.
\end{equation}
After passing to a subsequence, we can assume that
the intervals $[s_i\Lambda^{-1}, s_i\Lambda]$ are disjoint.
Now
\begin{align} \label{1.40}
\int_{s_i\Lambda^{-1}}^{s_i\Lambda} \int_X 
|tK^0(t)|^2 L(t) \: \dvol_\infty \: \frac{dt}{t} = &
\int_{\Lambda^{-1}}^{\Lambda} \int_X 
s_i^2 u^2 |K^0(s_i u)|^2 L(s_i u) \: \dvol_\infty \: \frac{du}{u} \\
\ge & \Lambda^{-1} 
\int_{\Lambda^{-1}}^{\Lambda} \int_X 
s_i^2 |K^0(s_i u)|^2 L(s_i u) \: \dvol_\infty \: du \notag \\
\ge & \Lambda^{-1} \epsilon. \notag
\end{align}
This contradicts (\ref{1.27}).
\end{proof}

From Lemma \ref{1.31}, the volume forms of the rescaled metrics
$h_s(u)$ approach $\dvol_\infty$ in an appropriate sense, as $s \rightarrow
\infty$.
We now look at what one can say about the rest of $h_s(u)$.
In the scale invariant setting of Lemma \ref{1.10}, for any
$s >1$, the rescaled
metrics $u^{-2} h_s(u)$ are constant in $u$.  In particular, for any 
$\Lambda > 1$, we have $h_s(1) = \Lambda^{-2} h_s(\Lambda)$.  Without
assuming scale invariance, we would like to compare the nonvolume parts of
$h_s(1)$ and  $h_s(\Lambda)$ as $s \rightarrow \infty$. To do so, 
we look at their pointwise change as an element of a symmetric space.
 
Given $\Lambda > 1$, $s >> 1$ and $x \in X$, there is some
$H_{s,\Lambda}(x) \in \End(T_x)$ such that
\begin{equation} \label{1.41}
h_s(x, \Lambda) = H_{s,\Lambda}(x)^* h_s(x, 1) H_{s,\Lambda}(x).
\end{equation}
It is defined up to left multiplication by $\Isom(T_xX, h_s(x,1))$.
Let ${H}^1_{s,\Lambda}(x)$ be the rescaling of 
$H_{s,\Lambda}(x)$ to have determinant one.  After choosing an orthonormal
basis of $(T_xX, h_s(x,1))$, the endomorphism
${H}^1_{s,\Lambda}(x)$ defines an element of the symmetric
space $\SO(n) \backslash \SL(n)$ of $n \times n$ symmetric matrices
with determinant one. 
Let $I_n \in \SO(n) \backslash \SL(n)$ be the basepoint represented
by the identity matrix. Let $d_{symm}$ be the distance on
$\SO(n) \backslash \SL(n)$, coming from the Riemannian metric
given by $\langle H, H \rangle = \Tr(H^2)$ for a traceless symmetric matrix
$H \in T_{I_n} (\SO(n) \backslash \SL(n))$.

\begin{proposition} \label{1.42}
We have
\begin{equation} \label{1.43}
\lim_{s \rightarrow \infty} d_{symm}(H^1_{s,\Lambda}, I_n) = 0
\end{equation}
in $L^2(X, \dvol_\infty)$.
\end{proposition}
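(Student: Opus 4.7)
The plan is to work pointwise at each $x \in X$, bound $d_{symm}(H^1_{s,\Lambda}(x), I_n)$ by the length of a natural curve in the symmetric space, and then integrate. Fix $x$ and choose a basis of $T_xX$ orthonormal with respect to $h_s(x,1)$. In this basis $h_s(x,1)$ is the identity matrix $I_n$; write $A(u)$ for the symmetric positive definite matrix representing $h_s(x,u)$, so $A(1) = I_n$, and set $A_0(u) = A(u)/(\det A(u))^{1/n}$, which is symmetric positive definite with determinant one. Under the identification $\SO(n)\backslash\SL(n) \leftrightarrow \{\text{symmetric positive definite matrices of determinant one}\}$ given by $[g] \mapsto g^Tg$, the positive definite symmetric representative $A_0(\Lambda)^{1/2}$ of $H^1_{s,\Lambda}(x)$ corresponds to the point $A_0(\Lambda)$. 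Thus $d_{symm}(H^1_{s,\Lambda}(x), I_n)$ equals the symmetric-space distance from $A_0(\Lambda)$ to $I_n$.

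From the evolution equation (\ref{1.4}) together with $K_s = K^0_s + (H_s/n) h_s$ and $H_s = -n/u$, one obtains $\partial_u h_s = -2 L_s K^0_s + (2L_s/u) h_s$. Tracing with $h_s^{-1}$ gives $\partial_u \log \det A = 2n L_s/u$, so the conformal part of the evolution is precisely absorbed into $(\det A)^{1/n}$, yielding
\[
\partial_u A_0 = -2 L_s (\det A)^{-1/n} K^0_s.
\]
With the symmetric-space metric normalized so that $\langle H,H\rangle_{I_n} = \Tr(H^2)$ on traceless symmetric $H$ (which, under the above identification, corresponds to the invariant metric $\langle V,V\rangle_g = \frac{1}{4} \Tr(g^{-1} V g^{-1} V)$ on symmetric positive definite matrices), a direct calculation yields
\[
\mathrm{speed}(u)^2 = \frac{1}{4} \Tr\bigl(A_0^{-1} \partial_u A_0\, A_0^{-1} \partial_u A_0\bigr) = L_s^2 \Tr\bigl(A^{-1} K^0_s A^{-1} K^0_s\bigr) = L_s^2 |K^0|_s^2,
\]
the factors of $(\det A)^{\pm 1/n}$ cancelling precisely, and the trace expression being the invariant $|K^0|_s^2$. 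Bounding distance by length of the curve $u \mapsto A_0(u)$ gives
\[
d_{symm}(H^1_{s,\Lambda}(x), I_n) \le \int_1^\Lambda L_s(u,x)\,|K^0|_s(u,x)\, du.
\]

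To finish, square and apply Cauchy-Schwarz in $u$, then use the maximum-principle bound $L_s \le 1$ coming from (\ref{1.13})-(\ref{1.14}), to get
\[
d_{symm}^2(H^1_{s,\Lambda}(x), I_n) \le (\Lambda - 1) \int_1^\Lambda L_s |K^0|_s^2 \, du.
\]
Integrating over $X$ against $\dvol_\infty$ and exchanging integration order by Fubini,
\[
\int_X d_{symm}^2(H^1_{s,\Lambda}, I_n) \, \dvol_\infty \le (\Lambda - 1) \int_1^\Lambda \int_X L_s |K^0|_s^2\, \dvol_\infty \, du,
\]
and the right-hand side tends to zero as $s \to \infty$ by (\ref{1.27}), which is the estimate driving Proposition \ref{1.37}. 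The main technical point of the argument is the clean cancellation in the speed computation, which reduces the proof to an $L^1$ integral bound already established in this subsection; no further control of $A_0$, its eigenvalues, or any Gronwall-type bootstrap is required.
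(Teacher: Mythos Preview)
Your proof is correct and follows essentially the same route as the paper: bound $d_{symm}$ pointwise by the length of the curve $u\mapsto A_0(u)$ in the symmetric space, compute that speed via the evolution equation (\ref{1.4}) to get a multiple of $L_s|K^0_s|$, apply Cauchy--Schwarz together with $L_s\le 1$, and conclude by Proposition~\ref{1.37}. The only discrepancy is a harmless normalization: the paper writes the speed as $2L_s|K^0_s|$ (working directly with $\Tr\bigl((h_s^{-1/2}\partial_u h_s\, h_s^{-1/2})^0\bigr)^2$ on positive-definite matrices), whereas you track the factor of $2$ coming from the differential of $[g]\mapsto g^Tg$ and get $L_s|K^0_s|$; this changes the final constant from $4(\Lambda-1)$ to $(\Lambda-1)$ but has no effect on the conclusion.
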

\begin{proof}
Let $M^0$ denote the traceless part of an $n \times n$ matrix $M$, i.e.
$M^0 = M - \frac{1}{n} (\Tr M) I_n$.
From (\ref{1.4}), the length of the curve
$\{ H^1_{s,u}(x) \}_{u=1}^\Lambda$ is
\begin{align} \label{1.44}
& \int_1^\Lambda \sqrt{
\Tr \left( \left( \left( h_s(x,u)^{- \: \frac12} 
\frac{\partial h_s(x,u)}{\partial u} h_s(x,u)^{- \: \frac12} \right)^0 
\right)^2 \right)
}
\: du = \\
& 2 \int_1^\Lambda |K_s^0(x,u)| L_s(x,u) du. \notag
\end{align}
Using the Cauchy-Schwarz inequality and the fact that $L_s \le 1$,
\begin{align} \label{1.45}
d_{symm}^2(H^1_{s,\Lambda}(x), I_n) \le &
4
\int_1^\Lambda |K_s^0|^2(x,u) L_s(x,u) du
\int_1^\Lambda L_s(x,u) du \\
\le & 4 (\Lambda-1) \int_1^\Lambda |K_s^0|^2(x,u) L_s(x,u) du. \notag
\end{align}
The proposition now follows from Proposition \ref{1.37}.
\end{proof}

\begin{remark} \label{1.46}
We cannot conclude from (\ref{1.43}) that there is a 
$\dvol_\infty$-almost everywhere limit as $t \rightarrow
\infty$ of $(\frac{\dvol_\infty}{\dvol_{h(t)}})^{\frac{1}{n}} h(t)$.
The reason is the factor of $(\Lambda - 1)$ in (\ref{1.45}), which
prevents us from taking $\Lambda \rightarrow \infty$.
\end{remark}

\subsection{Noncollapsed type-III Einstein flows} \label{subsect1.3}

This subsection is devoted to noncollapsed expanding CMC Einstein
flows with an {\it a priori} scale invariant curvature bound.
The results of this subsection are largely due to Anderson
\cite{Anderson (2001)}. As we will need some of the results in a
more general setting, we give a self-contained presentation,
modulo some technical results that we quote.  

Subsubsection \ref{subsubsect1.3.1} begins with the notion
of convergence for a sequence of 
CMC Einstein flows. We then give a compactness result for
CMC Einstein flows that uniformly satisfy certain geometric bounds.
We define type-III Einstein flows and obtain a compactness result for
the rescalings of a noncollapsed type-III Einstein flow.

In Subsubsection \ref{subsubsect1.3.2} we assume that the noncollapsed
type-III Einstien flow has
a scale invariant {\it a priori}  diameter bound. 
We show that the rescalings approach the collection of
Lorentzian cones over compact Riemannian Einstein manifolds with
Einstein constant $-(n-1)$.  This is a straightforward generalization
of the $n=3$ results in \cite[Section 3]{Anderson (2001)}.
(We use the Fischer-Moncrief normalized volume functional,
whereas Anderson used a different but closely related monotonic
quantity.)

Subsubsection \ref{subsubsect1.3.3} analyzes noncollapsed type-III
Einstein flows without the {\it a priori} diameter bound.  The result
is that for large time, there is a decomposition of $X$ into a
``thick part'' where the rescaled flow looks like a 
Lorentzian cone over a compact Riemannian Einstein manifold with
Einstein constant $-(n-1)$, and a ``thin'' part that has a $F$-structure
in the sense of Cheeger-Gromov \cite{Cheeger-Gromov (1986)}.
When $n$ is two or three, one can also say that after rescaling,
points in the thin part are volume collapsed.  The $n=3$ result was stated
in \cite[Section 3]{Anderson (2001)}; we add some detail to the
arguments.

Subsubsection \ref{subsubsect1.3.3} is
not needed for the rest of the paper. Stronger conclusions in the
$n=3$ case, under stronger
assumptions (boundedness of Bel-Robinson energies), are in
\cite{Reiris (2010)}.

To begin, we say how we measure the pointwise size of the curvature tensor.
Let ${\mathcal E}$ be an Einstein flow. Let $g$ be the corresponding
Lorentzian metric.
Put $e_0 = T = \frac{1}{L} \frac{\partial}{\partial t}$, 
a unit timelike vector
that is normal to the level sets of $t$.
Let $\{e_i\}_{i=1}^n$ be an orthonormal basis for $e_0^\perp$. Put
\begin{equation} \label{1.47}
|\Rm|_T = \sqrt{\sum_{\alpha, \beta, \gamma, \delta = 0}^n 
R_{\alpha \beta \gamma \delta}^2}.
\end{equation}

\subsubsection{Limits of CMC Einstein flows} \label{subsubsect1.3.1}

Let ${\mathcal E}^\infty = \left( L^\infty, h^{\infty}, K^{\infty}
\right) $ be a CMC Einstein flow on a pointed $n$-manifold
$\left( X^\infty, x^\infty \right)$, 
with complete time slices, defined on a time interval
$I^\infty$. For the moment, $t$ need not be the Hubble time.

\begin{definition} \label{1.48}
The flow ${\mathcal E}^\infty$ is $W^{2,p}$-regular if 
$X^\infty$ is a $W^{3,p}$-manifold, 
$L^\infty$ and $h^\infty$ are locally $W^{2,p}$-regular in space and time, and 
$K^\infty$ is locally $W^{1,p}$-regular in space and time.
\end{definition}
Note that the equations of Definition \ref{1.1} make sense in this
generality.

Let ${\mathcal E}^{(k)} = \{h^{(k)}, K^{(k)}, L^{(k)} \}_{k=1}^\infty$ be 
smooth CMC
Einstein flows on pointed $n$-manifolds 
$\{ \left( X^{(k)}, x^{(k)} \right) \}_{k=1}^\infty$,
defined on time intervals $I^{(k)}$.

\begin{definition} \label{1.49}
We say that $\lim_{k \rightarrow \infty} {\mathcal E}^{(k)} =
{\mathcal E}^\infty$ in the pointed weak $W^{2,p}$-topology
if 
\begin{itemize}
\item Any compact interval $S \subset I^\infty$ is contained in
$I^{(k)}$ for large $k$, and
\item For any compact interval $S \subset I^\infty$ and
any compact $n$-dimensional manifold-with-boundary $W^\infty \subset X^\infty$
containing $x^\infty$, for large $k$
there are pointed time-independent $W^{3,p}$-regular diffeomorphisms
$\phi_{S,W,k} : W^\infty \rightarrow W^{(k)}$ (with
$W^{(k)} \subset X^{(k)}$) so that 
\begin{itemize}
\item $\lim_{k \rightarrow \infty}
\phi_{S,W,k}^* L^{(k)} = L^\infty$ weakly in $W^{2,p}$ on $S \times W^\infty$,
\item $\lim_{k \rightarrow \infty}
\phi_{S,W,k}^* h^{(k)} = h^\infty$ weakly in $W^{2,p}$ on $S \times W^\infty$
and
\item $\lim_{k \rightarrow \infty}
\phi_{S,W,k}^* K^{(k)} = K^\infty$ weakly in $W^{1,p}$ on $S \times W^\infty$.
\end{itemize}
\end{itemize}
\end{definition}

We define pointed (norm) $C^{1,\alpha}$-convergence similarly.

\begin{definition} \label{1.50}
Let ${\mathcal S}$ be a collection of pointed CMC
Einstein flows defined on a time interval $I^\infty$.
 We say that a sequence 
$\{ {\mathcal E}^{(k)} \}_{k=1}^\infty$ approaches ${\mathcal S}$ 
as $k \rightarrow \infty$, in the pointed weak $W^{2,p}$-topology, if
for any subsequence of $\{ {\mathcal E}^{(k)} \}_{k=1}^\infty$, there
is a further subsequence
that converges to an element of ${\mathcal S}$
in the pointed weak $W^{2,p}$-topology.
\end{definition}

\begin{definition} \label{1.51}
Let ${\mathcal S}$ be a collection of pointed CMC 
Einstein flows defined on a time
interval $I^\infty$. We say that a $1$-parameter family
$\{ {\mathcal E}^{(s)} \}_{s \in [s_0, \infty)}$ of pointed CMC Einstein flows
 approaches 
${\mathcal S}$, in the pointed weak $W^{2,p}$-topology, if
for any sequence $\{s_k\}_{k=1}^\infty$ in $[s_0, \infty)$ with
$\lim_{k \rightarrow \infty} s_k = \infty$, there is a subsequence
of the flows $\{ {\mathcal E}^{(s_k)} \}_{k=1}^\infty$
that converges to an element of ${\mathcal S}$
in the pointed weak $W^{2,p}$-topology.
\end{definition}

We define ``approaches ${\mathcal S}$'' in the 
pointed (norm) $C^{1,\alpha}$-topology similarly.
The motivation for these definitions comes from how one can
define convergence to a compact subset of a metric space, just
using the notion of sequential convergence. In our applications,
the relevant set ${\mathcal S}$ of Einstein flows can be taken to be
sequentially compact.

The next result is essentially contained in 
\cite[Proof of Theorem 3.1]{Anderson (2001)}.

\begin{proposition} \label{1.52} 
Let $\left\{ {\mathcal E}^{(k)} \right\}_{k=1}^\infty$ 
be a sequence of CMC
Einstein flows on pointed $n$-dimensional manifolds $(X^{(k)}, x^{(k)})$.
Suppose that each ${\mathcal E}^{(k)}$ is
defined on a time-interval $I^{(k)}$, on which the mean curvature
$H^{(k)}$ is negative and increasing. Suppose that each ${\mathcal E}^{(k)}$
has complete time slices.
Suppose that $I^\infty \subset \R$ is an
interval so that for any compact interval $S \subset I^\infty$,
\begin{itemize}
\item For large
$k$ we have $S \subset I^{(k)}$, and
\item For large $k$, there are uniform upper bounds on
 $\left| H^{(k)} \right|$, 
$\left| \frac{d}{dt} H^{(k)} \right|$, 
$\left|\frac{d^2}{dt^2} H^{(k)} \right|$, 
$\left| \frac{d^3}{dt^3} H^{(k)} \right|$,
$- \: \frac{d}{dt} \frac{1}{H^{(k)}}$ and
$\left| \Rm^{(k)} \right|_T$ 
on $S$.
\end{itemize}

Fix $t_0 \in I^\infty$.
Suppose that there is some $v_0 > 0$ so that for all large $k$, the 
time-$t_0$ unit ball 
satisfies
$\vol \left( B_{h^{(k)}(t_0)}(x^{(k)}, 1)  \right) \ge v_0$.
Then after passing to a subsequence,
there is a limit
$\lim_{k \rightarrow \infty} {\mathcal E}^{(k)} =
{\mathcal E}^\infty$
in the pointed weak $W^{2,p}$-topology and the pointed 
$C^{1,\alpha}$-topology. The limit flow
${\mathcal E}^\infty$ is defined on a pointed $n$-manifold
$(X^\infty, x^\infty)$, and on the time interval $I^\infty$.
Its time slices are complete.

If for each compact interval $S \subset I^\infty$, there is some
$C_S < \infty$ such that 
$\left| K^{(k)} \right|^2 \le C_S
\frac{dH^{(k)}}{dt}$ for all large $k$, on the time interval $S$, 
then the limiting lapse function $L^\infty$ is
positive.
\end{proposition}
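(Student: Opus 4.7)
The plan is to reduce the problem to a standard Cheeger-Gromov compactness result on the spatial slice at time $t_0$, and then propagate the resulting regularity in time using the evolution equations together with the elliptic equation (\ref{1.13}) for the lapse.

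First I would establish uniform bounds on the intrinsic Riemannian geometry of each time-$t_0$ slice. The Gauss equation expresses the sectional curvature of $h^{(k)}(t_0)$ in terms of the Lorentzian sectional curvatures of $e_0^\perp$-planes and a quadratic expression in $K^{(k)}$; the constraint (\ref{1.2}) together with the bound on $|H^{(k)}|$ controls $|K^{(k),0}|$, and hence $|K^{(k)}|$. Combined with the hypothesized bound on $|\Rm^{(k)}|_T$, this produces a uniform sectional curvature bound on $h^{(k)}(t_0)$. The volume lower bound on $B_{h^{(k)}(t_0)}(x^{(k)},1)$ and completeness then supply a positive lower bound on the injectivity radius at $x^{(k)}$ via Cheeger's lemma, and the Cheeger-Gromov-Anderson compactness theorem yields a subsequence $(X^{(k)}, x^{(k)}, h^{(k)}(t_0))$ converging in the pointed $C^{1,\alpha}$-topology, with convergence holding weakly in $W^{2,p}$ after passing to harmonic coordinate charts on relatively compact regions.

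Next I would transport this control in time. The upper estimate in (\ref{1.14}) combined with the bound on $dH^{(k)}/dt$ and the away-from-zero bound on $|H^{(k)}|$ gives a uniform $L^\infty$-bound on $L^{(k)}$. Viewing (\ref{1.13}) as the elliptic equation
\begin{equation*}
\triangle_{h^{(k)}} L^{(k)} \; = \; L^{(k)} \left( |K^{(k),0}|^2 + \tfrac{1}{n} (H^{(k)})^2 \right) - \tfrac{dH^{(k)}}{dt},
\end{equation*}
with right-hand side in $L^\infty$ and coefficients controlled in the harmonic charts of the previous step, elliptic $L^p$-regularity upgrades this to uniform $W^{2,p}$-bounds on $L^{(k)}$ on each slice. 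The evolution equations (\ref{1.4})--(\ref{1.5}) then give bounds on $\partial_t h^{(k)}$ and $\partial_t K^{(k)}$ in terms of these controlled fields and of the spatial Ricci tensor (in turn controlled from $|\Rm^{(k)}|_T$ and the Gauss equation). Differentiating (\ref{1.13}) in time and using the higher time-derivative bounds on $H^{(k)}$ to control $\partial_t L^{(k)}$, I would bootstrap to uniform weak-$W^{2,p}$ bounds on $(L^{(k)}, h^{(k)}, K^{(k)})$ on a spacetime neighborhood of $\{t_0\} \times W^\infty$; a standard diagonal argument over exhausting subintervals $S_j \subset I^\infty$ and compact regions $W^\infty_j \subset X^\infty$ then produces the global subsequential limit ${\mathcal E}^\infty$.

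To conclude, the constraint and evolution equations pass to the limit because the nonlinearities are at most quadratic in fields belonging to weakly-$W^{2,p}$ and weakly-$W^{1,p}$ convergent sequences, so Rellich compactness handles the products. The limit slices are complete because pointed limits of complete manifolds are complete. For the final assertion, the lower bound in (\ref{1.14}) combined with the hypothesis $|K^{(k)}|^2 \le C_S \, dH^{(k)}/dt$ gives $L^{(k)}(t) \ge 1/C_S$ pointwise on $S$, which survives the limit as $L^\infty \ge 1/C_S > 0$. The main obstacle is the second step above: extracting from the purely Lorentzian bound on $|\Rm|_T$ the spatial regularity needed to iterate elliptic and Sobolev estimates, and verifying that the somewhat unusual list of bounds on $H^{(k)}$ together with its first three time derivatives and on $-\tfrac{d}{dt}(1/H^{(k)})$ is precisely strong enough to commute the elliptic regularity for $L^{(k)}$ with one time-differentiation and to control the induced Hubble-time rescaling. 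This is where the hypotheses are consumed and where the argument is delicate.
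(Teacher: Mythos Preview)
Your approach is essentially the same as the paper's: extract a Cheeger--Gromov limit at time $t_0$ from curvature and volume bounds, then propagate regularity in time via the evolution equations and the elliptic equation (\ref{1.13}) for $L$, differentiated in $t$ to consume the higher $H$-derivative hypotheses; the diagonal argument and the lower bound on $L$ via (\ref{1.14}) are handled identically.

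There is, however, a small but genuine circularity in your first paragraph. You write that the constraint (\ref{1.2}) together with the bound on $|H^{(k)}|$ controls $|K^{(k),0}|$, but (\ref{1.2}) reads $|K^0|^2 = R^h + (1-\tfrac{1}{n})H^2$, so bounding $|K^0|$ this way requires a bound on the \emph{spatial} scalar curvature $R^h$ first --- exactly what you are trying to obtain via the Gauss equation, which in turn needs a bound on $|K|$. The paper avoids this loop by invoking \cite[Proposition 2.2]{Anderson (2001)}, which bounds $|K|$ directly from $|H|$ and $|\Rm|_T$ using the Ricci-flat condition and the $e_0$-components of the spacetime curvature, not the scalar constraint. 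Once $|K|$ is bounded that way, the Gauss equation then delivers the spatial curvature bound, and the rest of your argument goes through. One further minor point: your completeness assertion for the limit slices is fine at $t_0$, but for other times the paper observes that the uniform bound on $|K|$ gives a uniform multiplicative distortion of distances between time slices, which is what transfers completeness from the $t_0$ slice to all of $I^\infty$.
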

\begin{proof}
On any compact interval $S \subset I^\infty$, the bounds on
$H^{(k)}$ and $\left| \Rm^{(k)} \right|_T$ give bounds on
$\left| K^{(k)} \right|$ for large $k$
\cite[Proposition 2.2]{Anderson (2001)}, 
and hence on the curvature of $h^{(k)}$.
From (\ref{1.14}), there is a uniform upper bound on the lapse
functions $L^{(k)}$.
Using (\ref{1.13}) and taking $t$-derivatives of it,
there are $W^{2,p}$-bounds on the $L^{(k)}$'s;
see \cite[p. 551]{Anderson (2001)} and 
\cite[Section 3]{Chen-LeFloch (2009)}. One also has first derivative
bounds on $K$. In all, one obtains $W^{2,p}$-bounds on
$\left\{ {\mathcal E}^{(k)} \right\}_{k=1}^\infty$ over the
time interval $S$; c.f. \cite[Theorem 3.1]{Chen-LeFloch (2009)}

Using the lower volume bound,
after passing to a subsequence of the pointed Riemannian manifolds 
$\{ (X^{(k)}, x^{(k)}, h^{(k)}(t_0)) \}_{k=1}^\infty$
there is a pointed $W^{3,p}$-regular limit manifold 
$(X^\infty, x^\infty)$ with a 
complete $W^{2,p}$-regular limit Riemannian metric $h^\infty(t_0)$. 
Let $W^\infty \subset X^\infty$ be a 
compact $n$-dimensional manifold-with-boundary containing $x^\infty$ and
let $\phi_{W,k} : W^\infty \rightarrow W^{(k)}$ be the comparison
diffeomorphisms inherent in forming $X^\infty$.
Put 
\begin{equation} \label{1.53}
\phi_{S, W, k} = (\Id_S \times \phi_{W,k}) : 
(S \times W^\infty) \rightarrow \left( S \times X^{(k)} \right).
\end{equation}
We have
uniform (in $k$) pointed
$W^{2,p}$-bounds on $\left\{ \phi_{S, W, k}^* {\mathcal E}^{(k)}
\right\}_{k=1}^\infty$, in the sense of Definition \ref{1.48}.
The construction of
${\mathcal E}^\infty$ now follows from
a standard diagonal argument; c.f. \cite[Section 2]{Hamilton (1995)}.

The uniform bounds on $\left| K^{(k)} \right|$ give uniform 
multiplicative bounds
on the distance distortion when going from time $t_0$ to another
time $t \in I^\infty$, from which the completeness of
$(X^\infty, h^\infty(t))$ follows.

If $\left| K^{(k)} \right|^2 \le C_S
\frac{dH^{(k)}}{dt}$ 
then (\ref{1.14}) implies that $L^{(k)} \ge C_S^{-1}$
on the time interval $S$. Hence $L^\infty \ge C_S^{-1}$ on $S$.
\end{proof}

We take $t = - \frac{n}{H}$.

\begin{definition} \label{1.54}
A type-III Einstein flow is an expanding CMC Einstein flow
for which there is some $C < \infty$ so that 
$|\Rm|_T \le C t^{-2}$. 
\end{definition}

Recall the rescaling from (\ref{1.36}).
We write the rescaled Einstein flow as ${\mathcal E}_s$. It is also type-III,
with the same constant $C$.

\begin{corollary} \label{1.55}
Let ${\mathcal E}$ be a type-III
Einstein flow on an $n$-dimensional manifold $X$.
Suppose that it is
defined on a time-interval $[t_0, \infty)$ with 
$t_0 > 0$, and has complete time slices.
Let $\{t_i\}_{i=1}^\infty$ be a sequence in $[t_0, \infty)$ with
$\lim_{i \rightarrow \infty} t_i = \infty$ and let 
$\{x_i\}_{i=1}^\infty$ be a sequence in $X$ with the property that
$\vol \left( B_{h(t_i)}(x_i, t_i)  \right) \ge v_0 t_i^n$ for large
$i$, and some $v_0 > 0$.
Then after passing to a subsequence, which we relabel as
$\{t_i\}_{i=1}^\infty$ and $\{x_i\}_{i=1}^\infty$, there is a limit
$\lim_{i \rightarrow \infty} {\mathcal E}_{t_i} = 
{\mathcal E}^\infty$
in the pointed weak $W^{2,p}$-topology and the pointed 
$C^{1,\alpha}$-topology. The limit flow
${\mathcal E}^\infty$ is
defined on the time interval $(0, \infty)$. Its time slices
$\{(X^\infty, h^\infty(u))\}_{u > 0}$ are complete. Its
lapse function $L^\infty$ is uniformly bounded below by a
positive constant.
\end{corollary}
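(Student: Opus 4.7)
The plan is to apply Proposition \ref{1.52} directly to the sequence ${\mathcal E}^{(k)} = {\mathcal E}_{t_k}$ of rescaled flows, pointed at $x_k$, with target interval $I^\infty = (0,\infty)$ and reference time $u_0 = 1$. The scale invariance built into the rescaling (\ref{1.36}), combined with the Hubble time identity $t = -n/H$ and the type-III assumption, should make verification of the hypotheses of Proposition \ref{1.52} essentially automatic.

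First, each ${\mathcal E}_{t_k}$ is defined on $[t_0/t_k, \infty)$, which contains any fixed compact $S \subset (0,\infty)$ for $k$ large. Because $H = -n/t$ on ${\mathcal E}$, the formulas (\ref{1.36}) give $H_{t_k}(u) = -n/u$, independently of $k$, so the bounds on $|H_{t_k}|$, on its first three time derivatives, and on $-\frac{d}{du}\frac{1}{H_{t_k}}$ are uniform on compact subintervals of $(0,\infty)$. Under the rescaling $g \mapsto t_k^{-2} g$, the Lorentzian curvature tensor is multiplied by $t_k^2$, so $|\Rm_{t_k}|_T(u) = t_k^2 |\Rm|_T(t_k u) \le C u^{-2}$, again uniformly in $k$ on $S$. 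The noncollapsing hypothesis at $u=1$ is just the scale-invariant reformulation of the assumed volume bound: distances in $h_{t_k}(1) = t_k^{-2} h(t_k)$ are $t_k^{-1}$ times those in $h(t_k)$, so $B_{h_{t_k}(1)}(x_k,1) = B_{h(t_k)}(x_k,t_k)$ as sets, with volume $t_k^{-n} \vol_{h(t_k)}(B_{h(t_k)}(x_k,t_k)) \ge v_0$.

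Having verified the main hypotheses, Proposition \ref{1.52} yields, after passing to a subsequence, a pointed limit flow ${\mathcal E}^\infty$ on an $n$-manifold $(X^\infty,x^\infty)$, defined on $(0,\infty)$ with complete time slices, with convergence in both the pointed weak $W^{2,p}$- and pointed $C^{1,\alpha}$-topologies. To obtain the uniform positive lower bound on $L^\infty$ on compact subintervals, I would verify the secondary hypothesis $|K_{t_k}|^2 \le C_S \tfrac{d H_{t_k}}{du}$ of Proposition \ref{1.52}. From the type-III bound on $|\Rm|_T$ together with the Anderson-type estimate cited inside the proof of Proposition \ref{1.52} (bounding $|K|$ in terms of $|\Rm|_T$ and $|H|$), one gets $|K_{t_k}(u)|^2 \le C' u^{-2}$ on $S$, while $\tfrac{dH_{t_k}}{du} = n u^{-2}$, so the required inequality holds with a constant depending only on $S$.

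The only genuinely nontrivial step is the control of $|K|$ by $|\Rm|_T$ and $|H|$, which is invoked via the cited Anderson estimate; the rest is scale-invariant bookkeeping ensuring that each hypothesis of Proposition \ref{1.52} is inherited by the rescaled sequence, and that the limit interval is all of $(0,\infty)$.
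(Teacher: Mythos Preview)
Your argument is correct and follows essentially the same route as the paper's own proof: apply Proposition~\ref{1.52} to the rescaled flows ${\mathcal E}_{t_k}$ with $I^{(k)} = [t_0/t_k,\infty)$ and $I^\infty = (0,\infty)$, then use the Anderson bound $|K_{t_k}|^2 \le C' u^{-2}$ together with $\tfrac{dH_{t_k}}{du} = n u^{-2}$ to invoke the last clause of Proposition~\ref{1.52}. Note that since your constant $C_S = C'/n$ is in fact independent of $S$, the resulting lower bound on $L^\infty$ is uniform on all of $(0,\infty)$, not merely on each compact subinterval, matching the statement of the corollary.
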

\begin{proof}
Put $I^{(i)} = [t_0/t_i, \infty)$, $I^\infty = (0, \infty)$ and 
${\mathcal E}^{(i)} = {\mathcal E}^{(i)}_{t_i}$. The existence
of ${\mathcal E}^\infty$ follows from Proposition \ref{1.52}.
From the proof of Proposition \ref{1.52}, on any compact interval
$S \subset I^\infty$ there is a bound
$\left| K^{(i)} \right|^2 \le \const t^{-2}$ that is uniform
in $i$. As $\frac{dH^{(i)}}{dt} = \frac{n}{t^2}$, Proposition \ref{1.52}
implies that $L^\infty > 0$. From its proof, $L^\infty$
is uniformly bounded below by a positive constant.
\end{proof}

\subsubsection{Noncollapsed Type-III Einstein flows with a diameter bound}
\label{subsubsect1.3.2}

In the rest of this section, we will only consider type-III
Einstein flows.
In this subsubsection, we make the following assumption.

\begin{assumption} \label{1.56}
There is some $D < \infty$ so that for all $t$, we have
$\diam(X, h(t)) \le D t$.
\end{assumption}

Let ${\mathcal S}$ be the collection of 
Einstein flows that generate Lorentzian cones
over compact $n$-dimensional
Riemannian Einstein manifolds with Einstein constant $-(n-1)$.
They are defined on the time interval $(0, \infty)$.

\begin{proposition} \cite{Anderson (2001)} \label{1.57}
Suppose that a type-III Einstein flow ${\mathcal E}$ satisfies
Assumption \ref{1.56}, with
$\lim_{t \rightarrow \infty} t^{-n} \vol(X, h(t)) > 0$.
Then as $s \rightarrow \infty$, the rescaled flows
${\mathcal E}_s$ approach ${\mathcal S}$
in the weak $W^{2,p}$-topology and
$C^{1,\alpha}$-topology.
\end{proposition}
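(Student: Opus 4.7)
The plan is to verify Definition \ref{1.51} directly: given any sequence $\{s_k\}_{k=1}^\infty$ with $s_k \to \infty$, I extract a subsequence whose rescalings converge in the pointed weak $W^{2,p}$- and $C^{1,\alpha}$-topologies to an element of ${\mathcal S}$. First I choose basepoints $x_k \in X$ arbitrarily. Assumption \ref{1.56} gives $\diam(X, h(t)) \le Dt$, so $B_{h(s_k)}(x_k, D s_k) = X$; combined with $s_k^{-n} \vol(X, h(s_k)) \ge v_0 > 0$ for large $k$, this supplies the noncollapsing hypothesis of Corollary \ref{1.55} (after absorbing $D^n$ into $v_0$ and slightly enlarging the reference radius). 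Corollary \ref{1.55} then yields, after passing to a subsequence, a pointed $W^{2,p}$-regular limit Einstein flow ${\mathcal E}^\infty = (L^\infty, h^\infty, K^\infty)$ on $(X^\infty, x^\infty)$ for $u \in (0, \infty)$, with complete time slices and $L^\infty$ bounded below by a positive constant. The diameter bound passes to the limit, so each $(X^\infty, h^\infty(u))$ is compact; a standard open-plus-closed argument then upgrades the pointed comparison diffeomorphisms $\phi_{S, X^\infty, k} : X^\infty \to X$ to global diffeomorphisms for all large $k$.

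Next I identify the limit. The Fischer--Moncrief functional $(-H)^n \vol(X, h(t))$ is scale invariant under the rescaling (\ref{1.36}), so
\begin{equation*}
(-H_{s_k}(u))^n \vol(X, h_{s_k}(u)) = (-H(s_k u))^n \vol(X, h(s_k u)).
\end{equation*}
By Proposition \ref{1.15} this quantity is monotonically nonincreasing in $t$, and by the positive-volume hypothesis it converges to some $V_\infty > 0$ as $t \to \infty$. In the rescaled CMC gauge one has $H_{s_k}(u) \equiv -n/u$ exactly, and the global comparison diffeomorphisms together with $C^{1,\alpha}$-convergence of $h_{s_k}$ give $\vol(X, h_{s_k}(u)) \to \vol(X^\infty, h^\infty(u))$. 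Hence $(-H^\infty(u))^n \vol(X^\infty, h^\infty(u)) = V_\infty$ is constant in $u$. By the rigidity clause of Proposition \ref{1.15} (applicable to ${\mathcal E}^\infty$ in view of Remark \ref{1.18} and Definition \ref{1.48}), ${\mathcal E}^\infty$ is a Lorentzian cone over a compact Riemannian Einstein manifold with Einstein constant $-(n-1)$, that is, ${\mathcal E}^\infty \in {\mathcal S}$.

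The main technical point requiring care is that the volume of $(X^\infty, h^\infty(u))$ genuinely equals $\lim_k s_k^{-n} \vol(X, h(s_k u))$, i.e.\ that no volume is ``lost at infinity'' under the pointed Cheeger--Gromov-style convergence coming from Corollary \ref{1.55}. This is precisely where the scale-invariant diameter bound of Assumption \ref{1.56} is indispensable: it forces $X^\infty$ to be compact, which in turn promotes the pointed convergence to honest global convergence of compact manifolds and gives the needed volume convergence. Once that is in hand, the chain ``constant normalized volume $\Rightarrow$ $K^0 \equiv 0$ and $L \equiv 1$ $\Rightarrow$ Lorentzian cone over an Einstein space'' is a direct application of the rigidity built into Proposition \ref{1.15}, and the proof is complete.
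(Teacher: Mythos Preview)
Your proof is correct and follows essentially the same route as the paper's: arbitrary basepoints, noncollapsing from the diameter-plus-volume bound, Corollary \ref{1.55} for a subsequential limit, compactness of $X^\infty$ from the diameter bound, constancy of the Fischer--Moncrief functional on the limit, and rigidity via Proposition \ref{1.15} and Remark \ref{1.18}. The only cosmetic difference is that the paper invokes Bishop--Gromov to pass from the global volume bound $\vol(X,h_{s_i}(1)) \ge v_0$ under $\diam \le D$ to a unit-ball bound $\vol(B_{h_{s_i}(1)}(x_i,1)) \ge v_0'$, whereas you absorb $D$ into the reference radius; both are equivalent ways of feeding the hypothesis of Corollary \ref{1.55}.
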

\begin{proof}
Let
$\{ s_i \}_{i=1}^\infty$ satisfy $\lim_{i \rightarrow \infty} s_i = \infty$.
Pick arbitrary basepoints $x_i \in X$.
From the upper diameter bound and the positive lower volume bound
on $(X, h_{s_i}(1))$, the Bishop-Gromov inequality gives
a $v_0 > 0$ so that for each $i$, we have
$\vol \left( B_{h_{s_i}(1)}(x_i,1) \right) \ge v_0$.  
Corollary \ref{1.55} now gives a 
subsequential limit Einstein flow ${\mathcal E}^\infty$,
which {\it a priori}
is $W^{2,p}$-regular in the sense of Definition \ref{1.48}.
Because of the diameter bounds, $X^\infty$ is compact.
The monotonicity of $t^{-n} \vol(X, h(t))$ implies
that $u^{-n} \vol(X^\infty, h^\infty(u))$ is constant in $u$.
By Proposition \ref{1.15} and Remark \ref{1.18},
${\mathcal E}^\infty \in {\mathcal S}$.
This proves the proposition.
\end{proof}

The lower volume bound in Proposition \ref{1.57} is guaranteed
when the topology of $X$ is such that
it cannot collapse with bounded curvature and
bounded diameter.  For example, it suffices that $X$ have a
nonzero characteristic number or a nonvanishing simplicial
volume. The conclusion of Proposition \ref{1.57} implies that
$X$ carries an Einstein metric with Einstein constant $-(n-1)$.

In three dimensions, if $X$ admits a hyperbolic metric then it
has positive simplicial volume and 
cannot collapse with bounded curvature and
bounded diameter. An Einstein three-manifold with
Einstein constant $-2$ is hyperbolic.

\begin{corollary} \label{1.58}
Suppose that a three dimensional
type-III Einstein flow ${\mathcal E}$ satisfies
Assumption \ref{1.56}, with
$\lim_{t \rightarrow \infty} t^{-3} \vol(X, h(t)) > 0$.
Let $\widetilde{\mathcal E}$ denote the pullback Einstein flow on
the universal cover $\widetilde{X}$. 
For $s > 0$, choose $\widetilde{x}_s \in \widetilde{X}$.
Then as $s \rightarrow \infty$, the pointed rescaled flows
$\left( \widetilde {\mathcal E}_s, \widetilde{x}_s \right)$ 
approach the flat Milne solution with basepoint
$(1,\widetilde{x}_\infty) \in (0,\infty) \times H^3$,
in the pointed weak $W^{2,p}$-topology and the pointed
$C^{1,\alpha}$-topology.
\end{corollary}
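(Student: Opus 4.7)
The plan is to reduce to Proposition \ref{1.57} and then exploit the fact that in dimension three, an Einstein metric has constant sectional curvature, so that the Riemannian universal cover of any limit is forced to be $H^3$.

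By Proposition \ref{1.57}, the rescaled flows ${\mathcal E}_s$ approach the collection ${\mathcal S}$ of Lorentzian cones over compact Riemannian Einstein $3$-manifolds with Einstein constant $-2$. Since the Weyl tensor vanishes in dimension three, any such Einstein manifold has constant sectional curvature $-1$, and its Riemannian universal cover is $H^3$. Given a sequence $\{s_i\}_{i=1}^\infty$ tending to infinity, I would pass to a subsequence so that ${\mathcal E}_{s_i} \to {\mathcal E}^\infty$, where ${\mathcal E}^\infty$ is the Lorentzian cone over some compact hyperbolic $3$-manifold $Y$. Its pullback to the universal cover of $Y$ is precisely the flat Milne spacetime on $(0,\infty) \times H^3$.

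Next I would lift the convergence. By Definition \ref{1.49}, for large $i$ there are time-independent diffeomorphisms $\phi_i : Y \to X$ implementing the convergence on fixed compact spacetime regions. These lift to diffeomorphisms $\widetilde{\phi}_i : H^3 \to \widetilde{X}$ of the universal covers. Because the covering projections $\widetilde{X} \to X$ and $H^3 \to Y$ are local isometries, the pulled-back flows $\widetilde{\phi}_i^* \widetilde{\mathcal E}_{s_i}$ converge to the Milne flow on $(0, \infty) \times H^3$ in the weak $W^{2,p}$-topology and the $C^{1,\alpha}$-topology on compact subsets.

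The main delicate step is aligning the basepoints. A lift $\widetilde{\phi}_i$ is determined only up to precomposition by a deck transformation of $H^3 \to Y$, i.e. by an element of $\pi_1(Y)$ acting by hyperbolic isometries. Since $Y$ is compact (as a consequence of Assumption \ref{1.56}), this action is cocompact with a fundamental domain $F \subset H^3$ of compact closure. I would choose $\widetilde{\phi}_i$ so that $\widetilde{y}_i := \widetilde{\phi}_i^{-1}(\widetilde{x}_{s_i})$ lies in $\overline{F}$, then extract a further subsequence with $\widetilde{y}_i \to \widetilde{x}_\infty \in H^3$. The $\widetilde{\phi}_i$ then realize the desired pointed convergence of $(\widetilde{\mathcal E}_{s_i}, \widetilde{x}_{s_i})$ to the Milne flow with basepoint $(1, \widetilde{x}_\infty)$. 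By homogeneity of $H^3$ under the isometry group of the Milne spacetime, any two such pointed limits are isometric, so the specific choice of $\widetilde{x}_\infty$ is immaterial.
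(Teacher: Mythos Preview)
Your argument is correct and matches the paper's intended approach: the paper states Corollary \ref{1.58} without proof, treating it as an immediate consequence of Proposition \ref{1.57} together with the fact that a three-dimensional Einstein manifold with Einstein constant $-2$ is hyperbolic (hence has universal cover $H^3$, whose Lorentzian cone is the Milne spacetime). Your explicit lifting and basepoint-alignment argument is exactly the kind of detail the paper suppresses here; for the analogous corollaries later in the paper (Corollaries \ref{2.11}, \ref{2.13}, \ref{2.29}, \ref{2.48}), the author simply writes ``the corollary follows as in \cite[Section 6.2]{Lott (2010)}'', and your cocompactness argument is essentially what that citation covers.
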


\begin{remark} \label{1.59}
The notion of convergence in Proposition \ref{1.57} is up to
$s$-dependent diffeomorphisms.  For this reason, Proposition \ref{1.57}
does not imply that $\lim_{s \rightarrow \infty} h_s(\cdot)$ exists as
a metric. As Proposition \ref{1.57} does give regions that
are arbitrarily close to Lorentzian cones in ${\mathcal S}$, a stability result
for Lorentzian cones would imply that that 
$\lim_{s \rightarrow \infty} h_s(\cdot)$ exists as
a metric. When $n=3$, the stability result of
\cite{Andersson-Moncrief (2004)} needs, in particular, $H^3$-closeness of
$h_s(1)$ to the hyperbolic metric on $X$. From Proposition \ref{1.57} we only
get weak $W^{2,p}$-closeness or $C^{1,\alpha}$-closeness.  
If we strengthen the type-III assumption
to include $|\nabla \Rm|_T \le C t^{-3}$ and 
$|\nabla \nabla \Rm|_T \le C t^{-4}$ then we will get
$C^{3,\alpha}$-closeness and the stability result will apply.
\end{remark}

\subsubsection{Noncollapsed Type-III Einstein flows without a diameter bound}
\label{subsubsect1.3.3}

In this subsubsection we remove the diameter assumption in
Subsubsection \ref{subsubsect1.3.2}.

Let ${\mathcal S}$ be the collection of 
Einstein flows that generate Lorentzian cones
over finite volume complete connected pointed $n$-dimensional
Riemannian Einstein manifolds with Einstein constant $-(n-1)$.
They are defined on the time interval $(0, \infty)$.

\begin{proposition} \label{1.68}
  Let ${\mathcal E}$ be a type-III Einstein flow with time slices
  diffeomorphic to a compact connected $n$-dimensional manifold $X$.
  Given $v > 0$, there is some $N_v \in \N$ so that for all
  $t \ge t_0$, there is a set $\{x_{t,j}\}_{j=1}^{N^\prime_t}$ in $X$,
  with $N^\prime_t \le N_v$, such that
\begin{itemize}
\item Each $x \in X - \bigcup_{j=1}^{N^\prime_t} B_{h(t)}(x_{t,j}, 2t)$
  has $t^{-n} \vol(B_{h(t)}(x,t)) < v$.
  Here $B_{h(t)}(x, t)$ denotes the ball of radius $t$ around $x$ with
respect to the metric $h(t)$.
\item Let $\{t_i\}_{i=1}^\infty$ be a sequence tending to infinity
  such that $\{x_{t_i,j}\}_{j=1}^{N^\prime_{t_i}}$ is nonempty for each $i$.
Let  $x_{i}$ be a choice of an element of
  $\{x_{t_i,j}\}_{j=1}^{N^\prime_{t_i}}$ for each $i$.
Then as $i \rightarrow \infty$, the pointed rescaled flows
  $({\mathcal E}_{t_i}, x_i)$ approach ${\mathcal S}$ in the
pointed weak $W^{2,p}$-topology and the pointed $C^{1,\alpha}$-topology.
\end{itemize}
\end{proposition}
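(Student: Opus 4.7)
My plan has two parts: producing the covering with a uniform bound on the number of balls, and identifying the pointed limits along the centers as Einstein cones.

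Call $x \in X$ at time $t$ \emph{thick} if $t^{-n} \vol(B_{h(t)}(x,t)) \ge v$. By a standard maximal-packing argument, I would choose $\{x_{t,j}\}_{j=1}^{N^\prime_t}$ to be a maximal collection of thick points such that the balls $B_{h(t)}(x_{t,j}, t)$ are pairwise disjoint. Maximality forces every thick point to lie within $h(t)$-distance $2t$ of some $x_{t,j}$ (otherwise its own $t$-ball would be disjoint from the packing, contradicting maximality), so the first bullet holds. For the cardinality bound, Proposition \ref{1.15} gives $(n/t)^n \vol(X, h(t)) \le (-H_0)^n \vol(X, h(t_0))$, hence $\vol(X, h(t)) \le V_0 t^n$ for some $V_0 = V_0(H_0, \vol(X, h(t_0)))$. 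Since the $N^\prime_t$ disjoint balls each have $h(t)$-volume at least $v t^n$, we obtain $N^\prime_t \le V_0 / v$, and we may take $N_v = \lfloor V_0 / v \rfloor$.

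For the convergence statement, fix a sequence $t_i \to \infty$ and centers $x_i \in \{x_{t_i,j}\}$. Thickness gives $\vol(B_{h_{t_i}(1)}(x_i, 1)) = t_i^{-n} \vol(B_{h(t_i)}(x_i, t_i)) \ge v$, so Corollary \ref{1.55} yields, after passing to a subsequence, a pointed limit ${\mathcal E}^\infty$ on a complete pointed $n$-manifold $(X^\infty, x^\infty)$ for $u \in (0, \infty)$, with $L^\infty$ bounded below by a positive constant. Transferring the bound $\vol(X, h_{t_i}(u)) \le V_0 u^n$ through the $C^{1,\alpha}$-convergence on an exhaustion of $X^\infty$ gives $\vol(X^\infty, h^\infty(u)) \le V_0 u^n < \infty$, so the time slices of ${\mathcal E}^\infty$ have finite volume. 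To identify ${\mathcal E}^\infty \in {\mathcal S}$, I would first note that the integrand in (\ref{1.21}) is scale invariant: the substitution $t = s u$ converts $\int_{s \Lambda^{-1}}^{s \Lambda} \int_X |tK^0|^2 L \, \dvol(X, h(t)) \, t^{-n} \, dt/t$ into the same integral for the rescaled flow ${\mathcal E}_s$ on $[\Lambda^{-1}, \Lambda]$. Since (\ref{1.21}) is the tail of a convergent integral, this vanishes as $s \to \infty$; combined with $C^{1,\alpha}$-convergence on compact space-time regions and the lower bound on $L^\infty$, I conclude $K^{0,\infty} \equiv 0$.

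With $H^\infty = -n/u$ and $K^{0,\infty} = 0$, equation (\ref{1.13}) collapses to $\triangle L^\infty = (L^\infty - 1) n / u^2$. The maximum principle applied to (\ref{1.28}) gives $L \le 1$ in ${\mathcal E}$, hence $L^\infty \le 1$; then $f = 1 - L^\infty \ge 0$ is bounded, nonnegative and subharmonic on the complete finite-volume manifold $(X^\infty, h^\infty(u))$, so $f$ lies in every $L^p$ and Yau's $L^p$-Liouville theorem forces $f$ to be constant, whereupon $\triangle f = n f / u^2$ forces $f \equiv 0$. With (\ref{1.11}) now verified for ${\mathcal E}^\infty$, Lemma \ref{1.10} identifies ${\mathcal E}^\infty$ as a Lorentzian cone over a Riemannian Einstein manifold with Einstein constant $-(n-1)$, placing it in ${\mathcal S}$. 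The main technical hurdle is precisely the step $L^\infty = 1$: since $X^\infty$ may be noncompact, one cannot simply integrate the equation for $L^\infty$ over the whole space, and the finite-volume bound on $X^\infty$ together with Yau's Liouville theorem is essential.
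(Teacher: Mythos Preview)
Your argument is correct and the first half (the maximal-packing construction and the bound $N^\prime_t \le V_0/v$) is exactly what the paper does.  For the second half you and the paper both invoke Corollary~\ref{1.55} to extract a pointed limit ${\mathcal E}^\infty$, and you both kill $K^{0,\infty}$ by the ``tail of a convergent integral'' mechanism based on (\ref{1.21}).  The genuine difference is in how you force $L^\infty \equiv 1$.

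The paper simply reruns the same integral-tail argument a second time, using (\ref{1.29}) in place of (\ref{1.21}): if $L^\infty \ne 1$ on some compact set, one pulls back to obtain infinitely many disjoint time windows in the original flow on which $\int_X (1-L)\,t^{-n}\dvol\,dt/t$ is bounded below, contradicting the finiteness of (\ref{1.29}).  This is cheap and uniform: one mechanism handles both $1-L$ and $|K^0|^2 L$, and no PDE or global analysis on $X^\infty$ is needed.

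Your route instead first establishes $K^{0,\infty}=0$, reduces (\ref{1.28}) to $\triangle_{h^\infty} f = (n/u^2) f$ for $f = 1-L^\infty \ge 0$, uses the transferred bound $\vol(X^\infty,h^\infty(u)) \le V_0 u^n$ to put $f$ into every $L^p$, and then invokes Yau's $L^p$-Liouville theorem for nonnegative subharmonic functions.  This is sound (the $W^{2,p}$ regularity is enough for Yau's integration-by-parts proof), and it has the side benefit of making the finite-volume property of $X^\infty$ explicit, which is part of the definition of ${\mathcal S}$ but is left implicit in the paper's proof.  On the other hand it is more machinery than necessary: the paper's repetition of the integral argument is shorter and avoids the Liouville step entirely.
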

\begin{proof}
Define the $v$-thick part of
$(X,h(t))$ by
\begin{equation} \label{1.60}
X_{v-thick,h(t)} = \{x \in X \: : \: t^{-n}
\vol \left( B_{h(t)}(x, t) \right) 
\ge v \}.
\end{equation}
If $X_{v-thick,h(t)} \neq \emptyset$,
choose a maximal collection of points $\{x_{t,j}\}$
in $X_{v-thick,h(t)}$ so
that the balls $B_{h(t)} (x_{t,j},  t )$ are disjoint. 
From volume monotonicity, there is some $V_0 < \infty$ so that for all 
$t \ge t_0$,
we have $t^{-n} \vol(X, h(t)) \le V_0$.  Hence the number of points
in the collection is bounded above by $N = \frac{V_0}{v}$. 
The first conclusion of Proposition \ref{1.68} follows.

Given the sequence $\{t_i\}_{i=1}^\infty$ tending to infinity,
after passing to a subsequence,
from Corollary \ref{1.55}
the rescaled pointed flows
$\{({\mathcal E}_{t_i},x_i) \}$ converge to a pointed
$W^{2,p}$-regular Einstein flow
$({\mathcal E}^\infty, x^\infty)$
with complete time slices $(X^\infty, h^\infty(u))$
of constant mean curvature $- \: \frac{n}{u}$,
defined for $u \in (0, \infty)$.

To show that $({\mathcal E}^\infty, x^\infty)$ lies in ${\mathcal S}$,
we claim first that its lapse function $L^\infty$ is identically one.
Suppose not.  From (\ref{1.28}), the lapse $L$ for ${\mathcal E}$
is bounded above by one. Hence $L^\infty$ is also bounded above by one.
Suppose that $ L^\infty \neq 1$. Then there are a compact
set $K^\infty \subset X^\infty$, a time interval $[u_1, u_2] \subset
(0, \infty)$ and a number $\epsilon > 0$ so that
\begin{equation}
  \int_{u_1}^{u_2} \int_{K^\infty} (1 - L^\infty)
  \frac{\dvol(X^\infty, h^\infty(u))}{u^n} \frac{du}{u} > \epsilon.
  \end{equation}
It follows that for large $i$, there are compact subsets $K_i \subset X$
so that
\begin{equation} \label{toomany}
  \int_{t_i u_1}^{t_i u_2} \int_{K_i} (1 - L)
  \frac{\dvol(X, h(t))}{t^n} \frac{dt}{t} > \frac{\epsilon}{2}.
\end{equation}
From the set of intervals $\{ [t_i u_1, t_i u_2]\}_{i=1}^\infty$, we can
extract a subset consisting of an infinite number of disjoint intervals.
Then (\ref{toomany}) gives a contradiction to the fact from (\ref{1.29}) that
\begin{equation}
  \int_{t_0}^{\infty} \int_{X} (1 - L)
  \frac{\dvol(X, h(t))}{t^n} \frac{dt}{t} < \infty.
\end{equation}

Hence $L^\infty = 1$. A similar argument, using (\ref{1.21}), shows that
$|K^{0,\infty}|^2 L^\infty = 0$. Then from Lemma \ref{1.10}, the limit flow
${\mathcal E}^\infty$ lies in ${\mathcal S}$.
This proves the proposition.
\end{proof}

\begin{remark}
From \cite[Theorem 0.1]{Cheeger-Gromov (1990)} and
\cite[Theorem 4.7]{Cheeger-Gromov-Taylor (1982)},
there is some $v_0 = v_0(n,C) > 0$ so that for all large $t$,
the complement of the $v_0$-thick set $X_{v_0-thick,h(t)}$
is part of an open subset of $X$ with an $F$-structure.
(Here $C$ is the constant from Definition \ref{1.54}.)
In particular, if $X$ does not carry an $F$-structure then
$X_{v_0-thick,h(t)}$ is nonempty for all large $t$.
For example, it suffices that $X$ have
a nonzero Euler characteristic or a nonvanishing simplicial volume, 
e.g. if $\dim(X) = 3$
that $X$ has a hyperbolic piece in its Thurston decomposition.
\end{remark}

\subsubsection{Dimensions two and three}
\label{subsubsect1.3.4}

If $n$ is two or three then a finite volume complete Riemannian manifold
with $\Ric = - (n-1) g$ is hyperbolic, i.e.
has constant sectional curvature $-1$.
There is a positive lower bound
on the volumes of such manifolds.

For $n \in \{2,3\}$, let ${\mathcal S}$ now be the collection of 
Einstein flows that generate flat Lorentzian cones
over finite volume complete connected pointed $n$-dimensional
hyperbolic manifolds.

\begin{proposition} \label{1.68prime}
  Let ${\mathcal E}$ be a type-III Einstein flow with time slices
  diffeomorphic to a compact connected $n$-dimensional manifold $X$,
  where $n \in \{2,3\}$. Then
there are a number $N \in \N$ and
a function $\sigma : [t_0, \infty) \rightarrow (0, \infty)$ with
  $\lim_{t \rightarrow \infty} \sigma(t) = 0$ so that for all
  $t \ge t_0$, there is a set $\{x_{t,j}\}_{j=1}^{N_t^\prime}$ in $X$, with
  $N_t^\prime \le N$, such that
  \begin{itemize}
  \item Each $x \in X - \bigcup_{j=1}^{N^\prime_{t}}
    B_{h(t)} \left( x_{t,j}, \frac{t}{\sigma(t)} \right)$
    has $t^{-n} \vol(B_{h(t)}(x,t)) < \sigma(t)$.
  \item
    Let $\{t_i\}_{i=1}^\infty$ be a sequence tending to infinity
  such that $\{x_{t_i,j}\}_{j=1}^{N^\prime_{t_i}}$ is nonempty for each $i$.
  Let  $x_{i}$ be a choice of an element of
  $\{x_{t_i,j}\}_{j=1}^{N^\prime_{t_i}}$ for each $i$.
Then as $i \rightarrow \infty$, the pointed rescaled flows
  $({\mathcal E}_{t_i}, x_i)$ approach ${\mathcal S}$ in the
pointed weak $W^{2,p}$-topology and the pointed $C^{1,\alpha}$-topology.
\end{itemize}
\end{proposition}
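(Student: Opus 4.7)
The plan is to refine Proposition \ref{1.68} using two features special to dimensions two and three: a complete Einstein $n$-manifold satisfying $\Ric = -(n-1)g$ is hyperbolic of constant sectional curvature $-1$, and there is a uniform positive lower bound $v_* = v_*(n)$ on the volume of any complete finite-volume hyperbolic $n$-manifold (Gauss-Bonnet in dimension two, the Kazhdan-Margulis theorem applied to the Margulis thick-thin decomposition in dimension three). Let $V_0 = \lim_{t \to \infty} t^{-n} \vol(X, h(t))$, which is finite by Proposition \ref{1.15}. I would fix $N \in \N$ satisfying $(N+1) v_* > V_0$.

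The key auxiliary claim I would establish is: for every $\epsilon > 0$ there exist $R_\epsilon < \infty$ and $T_\epsilon \ge t_0$ so that for $t \ge T_\epsilon$, any $R_\epsilon$-separated subset of the $\epsilon$-thick set $\{x : \vol(B_{h_t(1)}(x,1)) \ge \epsilon\}$ in the rescaled metric $h_t(1) = t^{-2}h(t)$ has at most $N$ elements. Granting the claim, I would construct $\sigma$ by a diagonal procedure: choose an increasing sequence $T_k \to \infty$ with $T_k \ge T_{1/k}$ and $k \ge R_{1/k}$, and set $\sigma(t) = 1/k$ on $[T_k, T_{k+1})$. A maximal $(1/\sigma(t))$-separated subset of the $\sigma(t)$-thick set in $h_t(1)$ then yields at most $N$ centers $\{x_{t,j}\}$ in $X$ whose complement (at radius $t/\sigma(t)$ in $h(t)$) consists of $\sigma(t)$-thin points, giving the first bullet. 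The second bullet follows from Proposition \ref{1.68}, since the chosen centers inherit a positive lower volume bound on unit balls, combined with the dimension-2/3 rigidity that forces the Einstein limit bases to be hyperbolic.

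The claim itself is proved by contradiction. If it fails for some $\epsilon > 0$, one constructs $t_i \to \infty$ and $R_i \to \infty$ together with $R_i$-separated $\epsilon$-thick collections $\{x_{i,j}\}_{j=1}^{N+1}$ in $h_{t_i}(1)$. By a diagonal extraction using Corollary \ref{1.55}, a subsequence of the pointed rescaled flows $({\mathcal E}_{t_i}, x_{i,j})$ converges simultaneously for $j=1,\ldots,N+1$ to limits $({\mathcal E}^\infty_j, x^\infty_j)$; each limit lies in the $\mathcal S$ of Proposition \ref{1.68}, which in dimensions two and three is the set of flat Lorentzian cones over complete finite-volume pointed hyperbolic manifolds $Y_j$ with $\vol(Y_j) \ge v_*$. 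For any fixed $r$, the $h_{t_i}(1)$-balls $B(x_{i,j},r)$ are disjoint once $R_i \ge 2r$, so monotonicity yields
\begin{equation*}
V_0 \ge t_i^{-n} \vol(X, h(t_i)) \ge \sum_{j=1}^{N+1} \vol\bigl(B_{h_{t_i}(1)}(x_{i,j},r)\bigr).
\end{equation*}
Passing $i \to \infty$ by the $W^{2,p}/C^{1,\alpha}$ convergence of Corollary \ref{1.55}, then $r \to \infty$, yields $V_0 \ge (N+1) v_*$, contradicting the choice of $N$. The main obstacle is the final exhaustion step: in dimension three the limit $Y_j$ typically has infinite diameter owing to its cusps, so one must verify that $\vol(B(x^\infty_j,r)) \to \vol(Y_j)$ as $r \to \infty$ (valid by completeness and finite volume) and that this limit can be exchanged with the Cheeger-Gromov convergence at fixed large $r$.
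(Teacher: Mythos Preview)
Your strategy differs from the paper's: you bound the number $N$ of centers via the universal lower volume bound $v_*$ for finite-volume hyperbolic $n$-manifolds and a contradiction/compactness argument, whereas the paper invokes the Margulis thick--thin structure directly to show that in any finite-volume hyperbolic $Z$ the $v$-thick part lies in a $D(v)$-neighborhood of a fixed $2v_0$-thick part, and then reruns Proposition~\ref{1.68} while keeping all centers inside $X_{v_0\text{-thick},h(t)}$. Your auxiliary claim and its proof are essentially correct (one small slip: monotonicity gives $t_i^{-n}\vol(X,h(t_i))\ge V_0$, not $\le$; the argument is saved because you take $i\to\infty$ before $r\to\infty$, so the upper bound $V_0$ is recovered in the limit).

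There is, however, a genuine gap in your derivation of the second bullet. Your centers $x_{t,j}$ are, by construction, only $\sigma(t)$-thick, i.e.\ $\vol\!\big(B_{h_t(1)}(x_{t,j},1)\big)\ge\sigma(t)$ with $\sigma(t)\to 0$. This is \emph{not} a positive lower bound uniform in $t$, which is exactly the hypothesis needed in Corollary~\ref{1.55} (and hence in Proposition~\ref{1.68}) to extract a noncollapsed limit. Without it, a subsequence of the pointed rescaled flows could collapse to an Einstein flow on an \'etale groupoid rather than to a Lorentzian cone over a hyperbolic \emph{manifold}, so the limit need not lie in the $\mathcal S$ of Subsubsection~\ref{subsubsect1.3.4}. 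The sentence ``the chosen centers inherit a positive lower volume bound on unit balls'' is precisely the unproved step.

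The gap can be closed, but it requires an extra ingredient that is morally the paper's Margulis-lemma step. One clean fix in your framework: prove a second auxiliary claim, by the same contradiction/compactness scheme, that for a fixed small $\epsilon_0>0$ (chosen so that every finite-volume hyperbolic $n$-manifold has a $2\epsilon_0$-thick point) and for every $\epsilon\in(0,\epsilon_0)$ there exist $\widetilde R_\epsilon,\widetilde T_\epsilon$ with the property that for $t\ge\widetilde T_\epsilon$ every $\epsilon$-thick point of $(X,h_t(1))$ lies within distance $\widetilde R_\epsilon$ of an $\epsilon_0$-thick point. Granting this, choose the centers to lie in the $\epsilon_0$-thick set (at most $N$ of them by your first claim with $\epsilon=\epsilon_0$), and enlarge the covering radius to $\widetilde R_{\sigma(t)}+R_{\epsilon_0}$; then both bullets follow. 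This is exactly what the paper accomplishes by quoting the Margulis lemma to keep basepoints inside $X_{v_0\text{-thick},h(t)}$ while letting the covering parameter $v$ go to zero.
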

\begin{proof}
  Using the Margulis lemma and pointed compactness,
  there is some $v_0 > 0$ so that for all sufficiently small $v > 0$,
  there is some $D(v) < \infty$ with the following property.
  If $(Z,h)$ is a finite volume complete
  connected $n$-dimensional
  hyperbolic manifold, $n \in \{2, 3\}$, then
  $Z_{v-thick,h}$ is contained in the $D(v)$-neighborhood of
  $Z_{2v_0-thick,h}$.

  Consequently, we can carry out the proof of Proposition \ref{1.68}
  while letting $v$ go to zero, but keeping basepoints
  $\{x_{t,j}\}_{j=1}^{N^\prime_t}$ within
  $X_{v_0-thick,h(t)}$. The proposition follows.
\end{proof}

\begin{remark}
  There is a possible redundancy in the choice of basepoints
  $\{x_{t,j}\}_{j=1}^{N^\prime_t}$ in Proposition
  \ref{1.68prime}. In the second conclusion of the proposition,
 if $x_i = x_{t_i,j_i}$ and
  $x_i^\prime = x_{t_i,j^\prime_i}$ are choices of basepoints
  with $d_{h(t_i)} (x_i, x^\prime_i) = O(t_i)$ then
  they will give rise to the same element of ${\mathcal S}$, up to
  a change of basepoint. After eliminating this redundancy,
  we can say that for large $t$, there is a decomposition of
  $\left( X, \frac{h(t)}{t^2} \right)$ into an almost-hyperbolic part
  and a locally collapsing part.
  
  We do not claim that as $t \rightarrow \infty$, the volume of the
  almost-hyperbolic part
  approaches
$\lim_{t \rightarrow \infty}
t^{-n} \vol(X, h(t))$.
That is, it is conceivable that there
is a substantial part of the volume in the locally collapsing part of
$(X, h(t))$. 
\end{remark}

\section{Einstein flows on \'etale groupoids} \label{sect2}

This section contains the results about collapsed type-III Einstein flows.
The convergence results are phrased in terms of Einstein flows on
\'etale groupoids.
We refer to \cite[Section 3]{Lott (2010)} for an overview, aimed at
geometers, of the use of groupoids in collapsing theory.  More details
appear in \cite[Section 5]{Lott (2007)}.

In Subsection \ref{subsect2.1} we define Einstein flows on \'etale groupoids.
We extend the results of
Subsubsection \ref{subsubsect1.3.1} by removing the lower volume
bound assumption.  As an immediate application, we strengthen the
convergence result of Subsection \ref{1.2} when the Einstein flow is
type-III.
Namely, for any point $x \in X$ with $\dvol_\infty(x) \neq 0$,
the rescaled Einstein flows around $x$ converge in the pointed sense
to  Lorentzian cones over (possibly collapsed) Riemannian Einstein metrics
with Einstein
constant $-(n-1)$.

From Subsection \ref{subsect2.1}, after performing rescalings on
a type-III Einstein flow ${\mathcal E}$, we can extract subsequential
limit Einstein flows that live on \'etale groupoids.
In the rest of the section, we restrict to the case $n=3$. We also
assume a scale invariant {\it a priori} diameter bound on 
${\mathcal E}$.
The goal is to show that there are arbitrarily large future time
intervals on which ${\mathcal E}$ is modelled, in a scale invariant way,
by one of a few
homothety-invariant homogeneous Einstein flows, depending on the
Thurston type of $X$.
 
The dimension of the orbit space of the \'etale groupoid is the same as the
dimension of the Gromov-Hausdorff limit of the rescaled time slices.
The case when the dimension is three was covered in
Subsubsection \ref{subsubsect1.3.2}. The cases when the orbit space has
dimension zero, one or two are covering in Subsections
\ref{subsect2.2}, \ref{subsect2.3} and \ref{subsect2.4}, respectively.
More detailed descriptions are at the beginnings of the subsections.  

To summarize the relation between type-III flows (with a scale invariant
diameter bound) and topology, we recall the notion of the Thurston
type of a compact $3$-manifold \cite{Scott (1983)}.
This is a topological notion, i.e. we
do not only consider locally homogeneous metrics.
\begin{enumerate}
\item 
If ${\mathcal E}$ has a rescaling limit flow 
with a zero dimensional orbit
space then $X$ has
Thurston type $\R^3$ or $Nil$.
\item If $X$ has Thurston type $Sol$ then
any rescaling limit flow of 
${\mathcal E}$ has a one dimensional orbit
space. Conversely, 
if a rescaling limit flow of 
${\mathcal E}$ has a one dimensional orbit
space then $X$ has Thurston type $Sol$, $\R^3$ or $Nil$.
\item If $X$ has Thurston type $H^2 \times \R$ or $\widetilde{\SL(2, \R)}$
then any rescaling limit flow of
${\mathcal E}$ has a two dimensional orbit
space. Conversely, 
if a rescaling limit flow of
${\mathcal E}$ has a two dimensional orbit
space then $X$ has Thurston type $H^2 \times \R$, $\widetilde{\SL(2, \R)}$,
$\R^3$ or $Nil$.  One
can speculate that in fact, $X$ must have Thurston type $H^2 \times \R$ or 
$\widetilde{\SL(2, \R)}$; this is true when Proposition \ref{2.44} applies.
\item If $X$ has Thurston type $H^3$
then any rescaling limit flow of
${\mathcal E}$ has a three dimensional orbit
space. Conversely, 
if a rescaling limit flow of
${\mathcal E}$ has a three dimensional orbit
space then $X$ has Thurston type $H^3$.
\end{enumerate}

If $X$ has Thurston type $\R^3$ then the orbit space
of a rescaling limit flow could be zero dimensional 
(as happens for a quotient of a generic Kasner solution) or one dimensional
(as happens for a quotient of the Taub-flat spacetime).
The same is true for Thurston type $Nil$.

\subsection{Collapsing limits of expanding CMC Einstein flows} 
\label{subsect2.1}

In what follows, ${\mathcal X}$ will denote a closed effective Hausdorff 
\'etale groupoid \cite[Section 5]{Lott (2007)}. 
We will loosely refer to it just as an \'etale groupoid.

\begin{definition} \label{2.1}
Let $I$ be an interval in $\R$. 
An Einstein flow
${\mathcal E}$ on an $n$-dimensional \'etale groupoid ${\mathcal X}$ is 
given by a
family of nonnegative functions $\{L(t)\}_{t \in I}$ on ${\mathcal X}$,
a family of Riemannian metrics $\{h(t)\}_{t \in I}$ on ${\mathcal X}$, 
and a family
of symmetric covariant $2$-tensor fields $\{K(t)\}_{t \in I}$ on ${\mathcal X}$
so that equations (\ref{1.2})-(\ref{1.5}) are satisfied.
\end{definition}

We can talk about ${\mathcal E}$ being $W^{2,p}$-regular as in
Definition \ref{1.48}. The $W^{2,p}$-norms can be defined using
integration over the orbit space, as in \cite[Section 2.6]{Hilaire (2015)}.

An expanding CMC Einstein flow on an \'etale groupoid ${\mathcal X}$ is defined
as in Definition \ref{1.7}.

The definition of convergence of Riemannian groupoids is given in
\cite[Definition 5.8]{Lott (2007)}. 
Let $\{ {\mathcal E}^{(k)} \}_{k=1}^\infty$
be CMC Einstein flows on pointed
\'etale groupoids. If ${\mathcal E}^\infty$ is a CMC
Einstein flow on a pointed \'etale groupoid, whose time slices have
metrically complete orbit spaces, then we define 
pointed weak $W^{2,p}$-convergence of $\{ {\mathcal E}^{(k)} \}_{k=1}^\infty$
to ${\mathcal E}^\infty$ by the corresponding spacetime extension,
as in Definition \ref{1.49}. 
Let ${\mathcal S}$ be a set of CMC
Einstein flows on pointed \'etale groupoids, whose time slices have
metrically complete orbit spaces. If $\{ {\mathcal E}^{(k)} \}_{k=1}^\infty$
are pointed CMC Einstein flows on \'etale groupoids then as in Definition
\ref{1.50}, we can talk about
$\{ {\mathcal E}^{(k)} \}_{k=1}^\infty$ approaching ${\mathcal S}$ as
$k\rightarrow \infty$. 
If $\{ {\mathcal E}^{(s)} \}_{s \in [s_0, \infty)}$ is a $1$-parameter
family of 
pointed CMC Einstein flows on \'etale groupoids then as in Definition
\ref{1.51}, we can talk about
$\{ {\mathcal E}^{(s)} \}_{s \in [s_0, \infty)}$ approaching ${\mathcal S}$ as
$s\rightarrow \infty$.

\begin{proposition} \label{2.2} 
Let $\left\{ {\mathcal E}^{(k)} \right\}_{k=1}^\infty$ 
be sequence of CMC
Einstein flows on pointed $n$-dimensional manifolds $(X^{(k)}, x^{(k)})$.
Suppose that ${\mathcal E}^{(k)}$ is
defined on a time-interval $I^{(k)}$, on which the mean curvature
$H^{(k)}$ is negative and increasing. Suppose that ${\mathcal E}^{(k)}$
has complete time slices.
Suppose that $I^\infty \subset \R$ is an
interval so that for any compact interval $S \subset I^\infty$,
\begin{itemize}
\item For large
$k$ we have $S \subset I^{(k)}$, and
\item For large $k$, there are uniform upper bounds on
 $\left| H^{(k)} \right|$, 
$\left| \frac{d}{dt} H^{(k)} \right|$, 
$\left|\frac{d^2}{dt^2} H^{(k)} \right|$, 
$\left| \frac{d^3}{dt^3} H^{(k)} \right|$
$- \: \frac{d}{dt} \frac{1}{H^{(k)}}$ and
$\left| \Rm^{(k)} \right|_T$ 
on $S$.
\end{itemize}

Then after passing to a subsequence,
there is a limit
$\lim_{k \rightarrow \infty} {\mathcal E}^{(k)} =
{\mathcal E}^\infty$
in the pointed weak $W^{2,p}$-topology and the pointed 
$C^{1,\alpha}$-topology. The limit flow
${\mathcal E}^\infty$ is defined on a pointed $n$-dimensional
\'etale groupoid
$({\mathcal X}^\infty, {\mathcal O}^\infty)$, 
and on the time interval $I^\infty$.
The time slices have metrically complete orbit spaces.

If for each compact interval $S \subset I^\infty$, there is some
$C_S < \infty$ such that 
$\left| K^{(k)} \right|^2 \le C_S
\frac{dH^{(k)}}{dt}$ for all large $k$, on the time interval $S$, 
then the limiting lapse function $L^\infty$ is
positive.
\end{proposition}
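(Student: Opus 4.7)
The plan is to run the argument of Proposition \ref{1.52} verbatim through the derivation of uniform $W^{2,p}$-bounds, and then replace the pointed Cheeger-Gromov compactness for Riemannian manifolds by the corresponding compactness theorem for pointed Riemannian \'etale groupoids from collapsing theory, as developed in \cite[Section 5]{Lott (2007)} and used in \cite{Lott (2010)}. The only real difference from Proposition \ref{1.52} is that we no longer have a lower volume bound at $t_0$, so we must allow local collapse to be absorbed into the isotropy of a limiting \'etale groupoid.

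First I would establish uniform $W^{2,p}$-bounds. Fix a compact interval $S \subset I^\infty$. As in Proposition \ref{1.52}, the uniform upper bounds on $|H^{(k)}|$ and $|\Rm^{(k)}|_T$ give uniform upper bounds on $|K^{(k)}|$ via \cite[Proposition 2.2]{Anderson (2001)}, hence on the sectional curvature of $h^{(k)}(t)$. Equation (\ref{1.14}) then gives a uniform upper bound on the lapse $L^{(k)}$, and differentiating the elliptic equation (\ref{1.13}) in $t$ (using the bounds on higher derivatives of $H^{(k)}$) gives uniform $W^{2,p}$-estimates on $L^{(k)}$ and $W^{1,p}$-estimates on $K^{(k)}$, in the sense of Definition \ref{1.48}; this is the argument of \cite[p.~551]{Anderson (2001)} combined with \cite[Section 3]{Chen-LeFloch (2009)}.

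Next I would apply the collapsing compactness. The uniformly bounded curvature of the spatial slice $(X^{(k)}, h^{(k)}(t_0))$, together with the choice of basepoint $x^{(k)}$, yields after passing to a subsequence a pointed convergence
\[
 (X^{(k)}, x^{(k)}, h^{(k)}(t_0)) \longrightarrow ({\mathcal X}^\infty, {\mathcal O}^\infty, h^\infty(t_0))
\]
to a pointed $n$-dimensional Riemannian \'etale groupoid with metrically complete orbit space, in the weak $W^{2,p}$-topology and the $C^{1,\alpha}$-topology. Concretely, one works on small balls in $(X^{(k)}, h^{(k)}(t_0))$ where local covers become noncollapsed; the limiting local covers fit together into the arrows of ${\mathcal X}^\infty$, with local symmetries arising from the almost-symmetries of highly collapsed balls. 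This is the groupoid analogue of the argument at the start of Proposition \ref{1.52}.

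Then I would propagate the convergence across the time interval $I^\infty$. The comparison maps $\phi_{W,k}$ at time $t_0$ (coming from the groupoid convergence on charts $W \subset {\mathcal X}^\infty$) are time-independent, so setting $\phi_{S,W,k} = (\Id_S \times \phi_{W,k})$ as in (\ref{1.53}), the uniform $W^{2,p}$-bounds on $\left(L^{(k)}, h^{(k)}, K^{(k)}\right)$ over $S$ give weak subsequential limits of the pullbacks by a standard diagonal argument over exhausting charts and exhausting compact time intervals. The uniform bound on $|K^{(k)}|$ gives multiplicative distance-distortion estimates that propagate metric completeness of the orbit space from $t_0$ to all $t \in I^\infty$. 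Equivariance of each approximant under the arrows of ${\mathcal X}^{(k)}$ (trivially so, since ${\mathcal X}^{(k)}$ is a manifold) passes to invariance of $(L^\infty, h^\infty(t), K^\infty(t))$ under the arrows of ${\mathcal X}^\infty$, and passing to limits in (\ref{1.2})-(\ref{1.5}) shows that ${\mathcal E}^\infty$ is an Einstein flow on ${\mathcal X}^\infty$. The final positivity statement for $L^\infty$ is then identical to Proposition \ref{1.52}: $|K^{(k)}|^2 \le C_S \frac{dH^{(k)}}{dt}$ combined with (\ref{1.14}) gives $L^{(k)} \ge C_S^{-1}$ on $S$, a bound that survives weak $W^{2,p}$-convergence via the $C^{1,\alpha}$-convergence.

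The main obstacle I anticipate is verifying that the limiting groupoid structure is genuinely compatible with the full spacetime flow rather than only with the single slice $h^\infty(t_0)$. This is exactly why Definition \ref{1.49} insists that the comparison diffeomorphisms be time-independent: then the invariance of each $h^{(k)}(t)$ under the trivial arrows pulls back to invariance under the fixed arrows of ${\mathcal X}^\infty$, and the same for $L^{(k)}$ and $K^{(k)}$. All other steps are essentially routine, being either repetitions of the manifold argument or invocations of standard groupoid-compactness technology; the content is that collapsing never actually interferes with the spacetime evolution thanks to the uniform curvature and higher-derivative hypotheses.
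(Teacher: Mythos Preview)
Your proposal is correct and follows essentially the same approach as the paper: derive the uniform $W^{2,p}$-bounds exactly as in Proposition \ref{1.52}, replace the pointed manifold compactness at a fixed time $t_0$ by the pointed Riemannian groupoid compactness of \cite[Proposition 5.9]{Lott (2007)} (with smooth convergence weakened to weak $W^{2,p}$), and then propagate across $I^\infty$ via time-independent comparison maps as in Proposition \ref{1.52}. Your write-up is considerably more detailed than the paper's terse proof, but the logical structure is identical.
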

\begin{proof}
The proof is similar to that of Proposition \ref{1.52}.
On any compact interval $S \subset I^\infty$, the uniform bounds on
$H^{(k)}$ and $\left| \Rm^{(k)} \right|_T$ give uniform bounds on
$\left| K^{(k)} \right|$ for large $k$
\cite[Proposition 2.2]{Anderson (2001)}, 
and hence on the curvature of $h^{(k)}$.
Choose $t_0 \in I^\infty$.
As in \cite[Proposition 5.9]{Lott (2007)},
after passing to a subsequence the pointed Riemannian manifolds
$\{ (X^{(k)}, x^{(k)}, h^{(k)}) \}_{k=1}^\infty$ converge in the
pointed  weak $W^{2,p}$-topology to a pointed Riemannian groupoid
$({\mathcal X}^\infty, {\mathcal O}^\infty, h^\infty(t_0))$ whose orbit space
is metrically complete.  (The smooth convergence in
\cite[Proposition 5.9]{Lott (2007)} gets replaced by
pointed weak $W^{2,p}$-convergence.)
Given this, the construction of a limit on the 
time interval $I^\infty$ is similar to
that in the proof of Proposition \ref{1.52}.
\end{proof}

\begin{remark} \label{2.3}
There is an analog of Proposition \ref{2.2} when ${\mathcal E}^{(k)}$ is a CMC
Einstein flow on an $n$-dimensional \'etale groupoid.
\end{remark}

The unit space ${\mathcal X}^\infty_{(0)}$
 of the \'etale groupoid ${\mathcal X}^\infty$ carries
a locally constant sheaf ${\frak n}$ of finite-dimensional
Lie algebras, which act as germs of Killing vector fields on 
$({\mathcal X}^\infty_{(0)}, h^\infty(t))$. 
For any $t$, the Riemannian groupoid 
$({\mathcal X}^\infty_{(0)}, h^\infty(t))$ 
is a limit of Riemannian manifolds with
bounded curvature; hence the Lie algebras are
nilpotent.

We take $t = - \frac{n}{H}$.

\begin{corollary} \label{2.4}
Let ${\mathcal E}$ be a type-III
Einstein flow on a pointed $n$-dimensional manifold $(X,x)$.
Suppose that it is
defined on a time-interval $[t_0, \infty)$ with 
$t_0 > 0$, and has complete time slices.
Then for any sequence $\{t_i\}_{i=1}^\infty$ in $[t_0, \infty)$ with
$\lim_{i \rightarrow \infty} t_i = \infty$,
after passing to a subsequence, which we relabel as
$\{t_i\}_{i=1}^\infty$, there is a limit
$\lim_{i \rightarrow \infty} {\mathcal E}_{t_i} = 
{\mathcal E}^\infty$
in the pointed weak $W^{2,p}$-topology and the pointed 
$C^{1,\alpha}$-topology. The limit flow
${\mathcal E}^\infty$ exists on a pointed \'etale groupoid
$({\mathcal X}^\infty, {\mathcal O}^\infty)$ and is
defined on the time interval $(0, \infty)$. The orbit spaces
of its time slices are metrically complete. Its
lapse function $L^\infty$ is uniformly bounded below by a
positive constant.
\end{corollary}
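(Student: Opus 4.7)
The plan is to reduce Corollary \ref{2.4} directly to Proposition \ref{2.2} applied to the sequence of rescaled flows $\{{\mathcal E}^{(i)} = {\mathcal E}_{t_i}\}_{i=1}^\infty$, viewed on the time intervals $I^{(i)} = [t_0/t_i, \infty)$, with target interval $I^\infty = (0,\infty)$. The whole task is to verify the uniform geometric hypotheses required by Proposition \ref{2.2}, on any fixed compact subinterval $S \subset (0,\infty)$, for the rescaled flows.

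First I would exploit the fact that in Hubble time $t = -n/H$, rescaling by $s = t_i$ gives $H^{(i)}(u) = s H(su) = -n/u$. Hence $|H^{(i)}|$, $|dH^{(i)}/du|$, $|d^2 H^{(i)}/du^2|$, $|d^3 H^{(i)}/du^3|$, and $-\frac{d}{du}\frac{1}{H^{(i)}} = \frac{1}{n}$ are not merely bounded but identical across $i$, and bounded on any compact $S \subset (0,\infty)$. Meanwhile, the type-III assumption $|\Rm|_T \le C t^{-2}$ rescales (as noted after Definition \ref{1.54}) to $|\Rm^{(i)}|_T(u) \le C u^{-2}$, again uniformly bounded on $S$. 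Thus the first block of hypotheses of Proposition \ref{2.2} is satisfied.

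Next I would establish the bound needed for the lapse conclusion. The curvature and mean curvature bounds above feed into the pointwise estimate $|K^{(i)}|^2 \le \mathrm{const}$ on $S$ (from \cite[Proposition 2.2]{Anderson (2001)}, as used already in the proof of Proposition \ref{1.52}). Combined with $dH^{(i)}/du = n u^{-2}$, which is bounded below by a positive constant on $S$, this yields $|K^{(i)}|^2 \le C_S \frac{dH^{(i)}}{du}$ uniformly on $S$. So the supplementary hypothesis of Proposition \ref{2.2} also holds.

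Now I would invoke Proposition \ref{2.2} directly: after passing to a subsequence, the pointed rescaled flows converge in the pointed weak $W^{2,p}$-topology and pointed $C^{1,\alpha}$-topology to a limit CMC Einstein flow ${\mathcal E}^\infty$ on a pointed $n$-dimensional étale groupoid $({\mathcal X}^\infty,{\mathcal O}^\infty)$, defined on $(0,\infty)$, with metrically complete time-slice orbit spaces and with lapse $L^\infty$ uniformly bounded below by a positive constant on each compact subinterval of $(0,\infty)$. There is no real obstacle here, since all of the groupoid-level compactness theory has been absorbed into Proposition \ref{2.2}; the only thing one must be careful about is extracting the subsequence by a diagonal argument across an exhaustion of $(0,\infty)$ by compact subintervals, so that the bounds $C_S$ and the limit flow are obtained globally in $u$.
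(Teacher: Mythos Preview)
Your proposal is correct and follows essentially the same route as the paper: the paper's proof simply says ``Given Proposition \ref{2.2}, the proof is similar to that of Corollary \ref{1.55},'' and your write-up is exactly that argument spelled out. One small sharpening: to get the lapse bounded below by a single positive constant on all of $(0,\infty)$ (not merely on each compact $S$), note that the Anderson bound actually gives the scale-invariant estimate $|K^{(i)}|^2 \le \mathrm{const}\, u^{-2}$, so that $|K^{(i)}|^2 \big/ \frac{dH^{(i)}}{du} \le \mathrm{const}/n$ with a constant independent of $S$; this is how the paper obtains the uniform lower bound in Corollary \ref{1.55}.
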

\begin{proof}
Given Proposition \ref{2.2}, the proof is similar to that of
Corollary \ref{1.55}.
\end{proof}

Recall the definition of $\dvol_\infty$ from (\ref{1.22}).

\begin{proposition} \label{2.5}
Let ${\mathcal S}$ denote the collection of Einstein flows that are
Lorentzian cones over Riemannian Einstein metrics on \'etale groupoids, with 
Einstein constant $- (n-1)$. 
Under the hypotheses of Corollary \ref{2.4}, suppose that the basepoint
$x$ is such that $\dvol_\infty(x) \neq 0$. Then as $t \rightarrow \infty$,
the rescaled flows $\{ {\mathcal E}_t \}_{t=1}^\infty$ approach 
${\mathcal S}$.
\end{proposition}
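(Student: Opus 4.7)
The plan is to show that every pointed subsequential limit of $\{\mathcal{E}_{t_i}\}$ lies in $\mathcal{S}$. Given a sequence $t_i \to \infty$, I would apply Corollary \ref{2.4} to extract a subsequence (relabeled $\{t_i\}$) converging in the pointed weak $W^{2,p}$- and pointed $C^{1,\alpha}$-topologies to an Einstein flow $\mathcal{E}^\infty$ on an $n$-dimensional pointed \'etale groupoid $(\mathcal{X}^\infty, \mathcal{O}^\infty)$, defined on $(0,\infty)$, with lapse $L^\infty$ uniformly bounded below by a positive constant. By the \'etale-groupoid version of Lemma \ref{1.10}, it then suffices to prove $L^\infty \equiv 1$ and $K^{0,\infty} \equiv 0$ on the whole limit flow; the maximum principle applied to (\ref{1.28}) already gives $L^\infty \le 1$.

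To rule out $L^\infty < 1$ anywhere, I would mimic the contradiction argument of Proposition \ref{1.68}. Suppose $L^\infty \le 1 - \epsilon$ on $U^\infty \times [u_1, u_2]$ for some $\epsilon > 0$, relatively compact open $U^\infty \ni x^\infty$ in the unit space, and $[u_1, u_2] \subset (0, \infty)$. By the pointed $C^{1,\alpha}$-convergence, for large $k$ this transfers to $L_{t_k}(u) \le 1 - \epsilon/2$ on $V_k \times [u_1, u_2]$, where $V_k \subset X$ is an open neighborhood of $x$. The hypothesis $\dvol_\infty(x) \neq 0$ is exactly what provides a uniform lower bound $\int_{V_k} \dvol_\infty \ge c_0 > 0$: since $\dvol_\infty$ has strictly positive density in a fixed neighborhood of $x$, and the pointwise monotonicity $t^{-n} \dvol(X,h(t)) \ge \dvol_\infty$ (Lemma \ref{1.31}) controls the rescaled volume, one can arrange that $V_k$ always contains a fixed-size core of this density neighborhood. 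Changing variables $t = t_k u$ then yields
\begin{equation*}
\int_{t_k u_1}^{t_k u_2} \int_{V_k} (1 - L)\, \dvol_\infty\, \frac{dt}{t} \;\ge\; \frac{\epsilon c_0}{2}\,\log\frac{u_2}{u_1}.
\end{equation*}
Passing to a further subsequence so that the intervals $[t_k u_1, t_k u_2]$ are pairwise disjoint and summing over $k$ produces $+\infty$ on the left, contradicting the finiteness of $\int_{t_0}^{\infty}\int_X (1-L)\, \dvol_\infty\, dt/t$ from (\ref{1.29}). A parallel argument using (\ref{1.27}) forces $|K^{0,\infty}|^2 L^\infty \equiv 0$, hence $K^{0,\infty} \equiv 0$. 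Lemma \ref{1.10} then identifies $\mathcal{E}^\infty$ as a Lorentzian cone over a Riemannian Einstein metric on $\mathcal{X}^\infty$ with Einstein constant $-(n-1)$, so $\mathcal{E}^\infty \in \mathcal{S}$.

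The main technical obstacle is making the transfer step rigorous in the collapsed setting: the comparison maps in pointed groupoid convergence are only local isometric immersions, and the image of $U^\infty$ in $X$ may wrap with high multiplicity, which could degenerate the $\dvol_\infty$-mass of $V_k$ as the collapse deepens. The hypothesis $\dvol_\infty(x) \neq 0$ is what rescues the argument, by placing $x$ in the positive-density region of $\dvol_\infty$ and ensuring that $V_k$ retains a nontrivial core of uniform $\dvol_\infty$-mass near $x$. Without this hypothesis the argument would break, consistent with the fact that the conclusion is vacuous when $\dvol_\infty(x) = 0$.
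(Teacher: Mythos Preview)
Your contradiction strategy via (\ref{1.29}) is in the spirit of Proposition \ref{1.68}, but the paper takes a different and cleaner route that entirely sidesteps the groupoid-transfer obstacle you flag. Instead of the spatially integrated bound (\ref{1.29}), the paper works \emph{pointwise at the basepoint} $x$. From (\ref{1.16}) one has the pointwise identity
\[
\frac{\partial}{\partial t}\,\ln\frac{\dvol_\infty}{t^{-n}\dvol(X,h(t))} \;=\; \frac{n}{t}\bigl(1-L\bigr),
\]
and since $\dvol_\infty(x)\neq 0$ the left-hand side, evaluated at $x$, integrates to a finite quantity over $[t_0,\infty)$. Thus $\int_{t_0}^\infty (1-L(x,t))\,dt/t<\infty$, a purely pointwise statement requiring no spatial neighborhoods or comparison of $V_k$ with a density core. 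Because basepoints map to basepoints under pointed convergence, this immediately forces $L^\infty(x^\infty,u)=1$ for every $u$.

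The remaining step, propagating from the single point $x^\infty$ to the whole unit space, is handled by the strong maximum principle applied to (\ref{1.28}), after first invoking elliptic regularity to upgrade $L^\infty$ to $C^{2,\alpha}$. Once $L^\infty\equiv 1$, equation (\ref{1.28}) gives $K^{0,\infty}=0$ for free; no separate contradiction argument via (\ref{1.27}) is needed. Your approach trades the maximum principle for a direct neighborhood transfer, which works in the noncollapsed manifold setting of Proposition \ref{1.68} but, as you correctly note, becomes delicate when the comparison maps can wrap with high multiplicity. The assertion that $V_k$ retains a uniform $\dvol_\infty$-mass is plausible but not established in your sketch, and the paper's pointwise-then-propagate argument avoids the issue altogether.
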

\begin{proof}
With reference to Corollary \ref{2.4}, we must show that ${\mathcal E}^\infty$
describes a Lorentzian cone over an Einstein metric on ${\mathcal X}^\infty$,
with Einstein constant $-(n-1)$. 
From (\ref{1.16}), we have
\begin{equation} \label{2.5.5}
\frac{\partial}{\partial t} \ln \frac{
\dvol_\infty
}{
t^{-n} \dvol_t
} = \frac{n}{t} (1-L).
\end{equation}
Hence
\begin{equation} \label{2.6}
\int_{t_0}^\infty (1-L(x,t)) \: \frac{dt}{t} < \infty.
\end{equation}
(Recall that $L \le 1$.)
Then $L_\infty(x,u) = 1$ for all $u \in (0, \infty)$.
Equation (\ref{1.28}) (with $t$ replaced by $u$), 
along with elliptic regularity, the fact that
$g_\infty$ is locally $C^{1, \alpha}$-regular and the fact that
$K^0_\infty$ is locally $C^\alpha$-regular, implies that $L_\infty(x, u)$
is locally $C^{2,\alpha}$-regular in $x$.  
We can apply the strong maximum principle to (\ref{1.28}) on the unit space of
${\mathcal X}^\infty$ to obtain that $L_\infty = 1$ and $K_\infty^0 = 0$.
The proposition follows from Lemma \ref{1.10}.
\end{proof}

\begin{corollary} \label{2.6.25}
Under the hypotheses of Proposition \ref{2.5}, for any $C < \infty$ we
have the following asymptotics as $t \rightarrow \infty$:
\begin{enumerate}
\item The supremum of $|L-1|$ on the time-$t$ ball
$B_{h(t)}(x, Ct)$ is $o ( t^0 )$. \\
\item The supremum of $|K^0|$ on the time-$t$ ball
$B_{h(t)}(x, Ct)$ is $o ( t^{-1} )$.
\end{enumerate}
\end{corollary}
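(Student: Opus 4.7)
The plan is to argue by contradiction for each assertion, using the subsequential convergence of rescaled flows furnished by Proposition \ref{2.5}. Both quantities are scale invariant under the rescaling (\ref{1.36}): one has $L(y,t)-1 = L_t(y,1)-1$ and $t|K^0(y,t)|_{h(t)} = |K^0_t(y,1)|_{h_t(1)}$, while the time-$t$ ball $B_{h(t)}(x,Ct)$ rescales to the unit-time ball $B_{h_t(1)}(x,C)$. Consequently the two claims reduce to showing that $\sup_{B_{h_t(1)}(x,C)}|L_t(\cdot,1)-1|$ and $\sup_{B_{h_t(1)}(x,C)}|K^0_t(\cdot,1)|_{h_t(1)}$ both tend to $0$ as $t \rightarrow \infty$.

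Suppose the statement about $L$ fails. Then there exist $\epsilon > 0$ and sequences $t_i \rightarrow \infty$, $y_i \in B_{h_{t_i}(1)}(x,C)$, with $|L_{t_i}(y_i,1)-1| \ge \epsilon$. By Proposition \ref{2.5}, after passing to a subsequence $({\mathcal E}_{t_i}, x)$ converges in the pointed weak $W^{2,p}$-topology and pointed $C^{1,\alpha}$-topology to a Lorentzian cone ${\mathcal E}^\infty$ on a pointed \'etale groupoid $({\mathcal X}^\infty,{\mathcal O}^\infty)$; in particular $L_\infty \equiv 1$ and $K^0_\infty \equiv 0$ on the unit space ${\mathcal X}^\infty_{(0)}$. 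Let $\phi_i$ denote the comparison diffeomorphisms of Definition \ref{1.49}, identifying a relatively compact neighborhood of ${\mathcal O}^\infty$ in ${\mathcal X}^\infty_{(0)}$ with a neighborhood of $x$ in $X$.

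The type-III bound $|\Rm|_T \le Ct^{-2}$ gives a uniform curvature bound on the time-$1$ slices of the rescaled flows, hence also on the time-$1$ slice of ${\mathcal E}^\infty$. Together with metric completeness of the orbit space, this implies via Gromov pre-compactness that the closed $C$-ball around ${\mathcal O}^\infty$ in the orbit space is compact. After a further subsequence, $\phi_i^{-1}(y_i)$ therefore converges to some $z^\infty \in {\mathcal X}^\infty_{(0)}$. Since $W^{2,p}$ embeds in $C^0$ for $p$ large, the pointed weak $W^{2,p}$-convergence of $\phi_i^* L_{t_i}$ to $L_\infty$ is uniform on compacta, so $L_{t_i}(y_i,1) = (\phi_i^* L_{t_i})(\phi_i^{-1}(y_i),1) \rightarrow L_\infty(z^\infty,1) = 1$, contradicting $|L_{t_i}(y_i,1)-1| \ge \epsilon$. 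Assertion (2) follows by the same argument applied to the pointed weak $W^{1,p}$-convergence of $K^0_{t_i}$ to $K^0_\infty \equiv 0$, using the Sobolev embedding $W^{1,p} \hookrightarrow C^0$. The main technical obstacle is the groupoid bookkeeping required to follow the basepoints $y_i$ through the comparison maps $\phi_i$; once this is handled by the precompactness argument above, the contradiction is immediate from the scale-invariant structure of ${\mathcal E}^\infty$ given by Lemma \ref{1.10}.
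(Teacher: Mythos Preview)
Your argument is correct and follows the same route as the paper: rescale so that the claim becomes a statement at time $u=1$, then invoke the subsequential convergence of Proposition \ref{2.5} together with $L_\infty \equiv 1$, $K^0_\infty \equiv 0$ from Lemma \ref{1.10}. The paper's one-line proof simply cites the $C^\alpha$ convergence of $L$ and $K^0$ after rescaling; you spell out the contradiction argument and the groupoid bookkeeping, which is fine.

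One small imprecision: the sentence ``since $W^{2,p}$ embeds in $C^0$ for $p$ large, the pointed weak $W^{2,p}$-convergence \ldots\ is uniform on compacta'' is not quite right as stated. A continuous embedding $W^{2,p} \hookrightarrow C^0$ does not turn weak convergence into norm convergence; you need the compact embedding (Rellich--Kondrachov) $W^{2,p} \hookrightarrow C^{1,\alpha}$ on bounded domains, which upgrades weak $W^{2,p}$-convergence to strong $C^0$-convergence. Simpler still, Proposition \ref{2.5} (through Corollary \ref{2.4}) already gives convergence in the pointed $C^{1,\alpha}$-topology, which is norm convergence and hence directly yields the uniform $C^0$-convergence you need; this is what the paper's proof invokes. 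The same comment applies to your treatment of $K^0$.
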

\begin{proof}
The corollary follows from the $C^\alpha$ convergence of $L$ and $K^0$,
after rescaling.
\end{proof}

\begin{remark} \label{2.6.5}
We cannot conclude that the Einstein flow is noncollapsing around $x$,
in the sense of volumes of metric balls.
Although the volume form is noncollapsing in a neighborhood of $x$ in $X$,
there is not enough control on the change of distances to deduce
noncollapsing of a time-$t$ metric ball around $x$ of radius 
comparable to $t$, as $t \rightarrow \infty$.
{\it A priori}, the rescaled spatial geometry around $x$ could resemble that
of a point going out the end of a hyperbolic cusp, for example,
while the diameter of a fixed neighborhood $U \subset X$ of $x$
increases faster than $O(t)$ so as to keep $\vol(U) \ge \const t^n$.
\end{remark}

In what follows, we will assume that $n=3$, and also make the
following assumption.

\begin{assumption} \label{2.7}
There is some $D < \infty$ so that for all $t$, we have
$\diam(X, h(t)) \le Dt$.
\end{assumption}

The reason for Assumption \ref{2.7} is that we will want to apply
monotonicity arguments to limit spaces, and will need to know that
they have compact spatial hypersurfaces.  This is ensured by 
Assumption \ref{2.7}

Because of the bounded diameter assumption, in what follows we
will not have to choose basepoints. Assumption \ref{2.7}, along with
the type-III assumption, implies that the Thurston decomposition
of $X$ consists of a single topological type
\cite[Proposition 3.5]{Lott (2010)}.
We will also assume that $X$ is aspherical, i.e. has a contractible
cover.  Then the relevant Thurston types are
$\R^3$, $H^3$, $H^2 \times \R$, $\widetilde{\SL(2, \R)}$, $Nil$ and $Sol$. 
From \cite[Lemma 6.1]{Lott (2010)}, the \'etale groupoid 
${\mathcal X}^\infty$ of
Corollary \ref{2.4} is locally free. 

\subsection{Zero dimensional orbit space} \label{subsect2.2}

In this subsection we look at the case when there is a rescaling
limit that is an Einstein flow on an \'etale groupoid with a
zero-dimensional orbit space. This is the case when 
$\liminf_{t \rightarrow \infty} t^{-1} \diam(X, h(t)) = 0$.
The unit space of
the groupoid is locally homogeneous with respect to the local
action of the sheaf ${\frak n}$ of nilpotent Lie algebras. 
The only possibilities for the stalk of ${\frak n}$ are $\R^3$ or $nil$. 

In the $\R^3$ case, the limiting Einstein flow must be a Kasner
solution.  In the $nil$ case, the limiting Einstein flow must be
the Taub-nil solution.  Hence we can say that if 
$\lim_{t \rightarrow \infty} t^{-1} \diam(X, h(t)) = 0$ then
at large times, after rescaling the geometry is modeled by
one of these two solutions.

The rescalings of a Taub-nil solution approach a Kasner solution.
Using this fact, if $\liminf_{t \rightarrow \infty} t^{-1} \diam(X, h(t)) = 0$
then we show that there is a sequence of times going to infinity so
that after rescaling, the geometry is modeled by a Kasner solution.

We first recall some facts about homogeneous Einstein solutions from
\cite{Ellis-Wainwright (1997)}.
The only $\R^3$-invariant expanding CMC Einstein flows on $\R^3$ 
(up to time translation) are
the Kasner solutions
\begin{equation} \label{2.8}
g = - du^2 + u^{2p_1} dx^2 + u^{2p_2} dy^2 + u^{2p_3} dz^2, 
\end{equation}
where $p_1 + p_2 + p_3 = p_1^2 + p_2^2 + p_3^2 = 1$. 
Equation (\ref{2.8}) is not in CMC form. Putting it in CMC form
and using the time parameter $- \frac{3}{H}$ removes the
time translation freedom.

The only left-invariant expanding CMC Einstein flows on $Nil$ 
(up to time translation) are
the Taub-nil solutions
\begin{equation} \label{2.9}
g = - A^2 du^2 + u^{2p_1} A^{-2} (dx+4p_1bzdy)^2 + u^{2p_2} A^2 dy^2 +
u^{2p_3} A^2 dz^2,
\end{equation}
where $A^2 = 1 + b^2 u^{4p_1}$ and
$p_1 + p_2 + p_3 = p_1^2 + p_2^2 + p_3^2 = 1$. 
Equation (\ref{2.9}) is not in CMC form. Putting it in CMC form
and using the time parameter $- \frac{3}{H}$ removes the
time translation freedom.

With reference to Corollary \ref{2.4}, 
suppose that the groupoid ${\mathcal X}^\infty$ has
$\dim({\frak n}) = 3$. Then the orbit space is a point.

Let ${\mathcal E}$ be an Einstein flow on a $3$-dimensional
\'etale groupoid ${\mathcal X}$ with $\dim({\frak n}) = 3$. Then the stalk
of ${\frak n}$ is $\R^3$
or $nil$.  
If the stalk is $\R^3$ then there is a cross-product description
${\mathcal X} = \R^3 \rtimes \Gamma$, where $\Gamma$ is a group (with the discrete
topology) that contains $\R^3$ as a finite-index
subgroup of translations. We say that
${\mathcal E}$ is of Kasner type.

If the stalk of ${\frak n}$ is $nil$ then 
${\mathcal X} = Nil \rtimes \Gamma$, where $\Gamma$ is a group (with the discrete
topology) that contains $Nil$
as a finite-index subgroup, acting by left multiplication.
We say that ${\mathcal E}$ is of Taub-nil type.
One can check as $s \rightarrow \infty$, the rescaled Einstein flow
${\mathcal E}_s^\infty$ approaches an Einstein flow of Kasner type (\ref{2.8})
with the same indices $(p_1, p_2, p_3)$.

Let $Kas$ denote the Einstein flows of Kasner type on $3$-dimensional \'etale
groupoids whose orbit space is a point.
Let $Taub-nil$ denote the Einstein flows 
of Taub-nil type on $3$-dimensional \'etale
groupoids whose orbit space is a point.

\begin{proposition} \label{2.10}
Let ${\mathcal E}$ be a type-III Einstein flow on a compact manifold
$X$. Suppose that there
is a sequence $\{t_i\}_{i=1}^\infty$ with $\lim_{i \rightarrow \infty}
t_i = \infty$ so that $\lim_{i \rightarrow \infty} t_i^{-1}
\diam(X, h(t_i)) = 0$. Then $X$ has Thurston type $\R^3$ or $Nil$.
As $i \rightarrow \infty$,
\begin{enumerate}
\item 
If $X$ has Thurston type $\R^3$ then
the rescaled Einstein flow ${\mathcal E}_{t_i}$ approaches $Kas$
in the weak $W^{2,p}$-topology and the $C^{1,\alpha}$-topology.
\item
If $X$ has Thurston type $Nil$ then
the rescaled Einstein flow ${\mathcal E}_{t_i}$ approaches $Kas \cup Taub-nil$
in the weak $W^{2,p}$-topology and the $C^{1,\alpha}$-topology.
\end{enumerate}
\end{proposition}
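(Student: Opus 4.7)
The plan is to pass to subsequential limits via Corollary \ref{2.4}, identify the limits as homogeneous Einstein flows of one of the listed types, and then use topological rigidity of the collapse to pin down the Thurston type of $X$.

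First I would apply Corollary \ref{2.4} to the sequence $\{t_i\}_{i=1}^\infty$ (after passing to a subsequence) to obtain a limit Einstein flow ${\mathcal E}^\infty$ on a pointed \'etale groupoid $({\mathcal X}^\infty,{\mathcal O}^\infty)$, with convergence in both the pointed weak $W^{2,p}$-topology and the pointed $C^{1,\alpha}$-topology. The diameter condition $\lim_i t_i^{-1}\diam(X,h(t_i))=0$ means that the rescaled time slices $(X,h_{t_i}(1))$ have diameter tending to zero, so the pointed Gromov--Hausdorff limit at time $u=1$ is a single point. Consequently the orbit space of ${\mathcal X}^\infty$ at $u=1$ is zero-dimensional, which by the remarks preceding the proposition forces $\dim({\frak n})=3$. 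Since ${\frak n}$ is nilpotent and three-dimensional, its stalk is either abelian (${\frak n}\cong\R^3$) or the Heisenberg algebra ($nil$).

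Next I would identify ${\mathcal E}^\infty$ with one of the model homogeneous solutions. The flow ${\mathcal E}^\infty$ is locally invariant under the action of ${\frak n}$, so its unit space is locally homogeneous under a simply transitive action of either $\R^3$ or $Nil$. Since ${\mathcal E}^\infty$ is an expanding CMC Einstein flow (the type-III bound passes to the limit, and the Hubble normalization is preserved by rescaling), the classification recalled at the start of the subsection applies: in the $\R^3$ case ${\mathcal E}^\infty$ must be of Kasner type (\ref{2.8}), and in the $nil$ case it must be of Taub-nil type (\ref{2.9}). Thus every subsequential limit lies in $Kas\cup Taub\text{-}nil$, proving the approach statement in case (2). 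For case (1) it only remains to rule out Taub-nil limits when $X$ has Thurston type $\R^3$.

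Finally I would establish the topological rigidity. Because ${\mathcal E}$ is type-III (so $h(t)$ has sectional curvature $O(t^{-2})$) and $t_i^{-1}\diam(X,h(t_i))\to 0$, the rescaled Riemannian manifolds $(X,h_{t_i}(1))$ collapse with uniformly bounded curvature, so by Cheeger--Fukaya--Gromov theory $X$ admits a pure nilpotent $N$-structure whose fiber is locally modelled on the stalk of ${\frak n}$. Since $X$ is aspherical (the previous subsection reduced to this case) and three-dimensional, this $N$-structure forces its Thurston type: stalk $nil$ forces type $Nil$, while stalk $\R^3$ is compatible with type $\R^3$ or $Nil$. This proves the dichotomy of Thurston types in the first assertion, and shows that when $X$ has Thurston type $\R^3$ the stalk must be abelian, so the limit must be of Kasner type.

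The main obstacle I expect is the last step: extracting the correct topological conclusion (in particular, excluding Taub-nil limits for Thurston type $\R^3$) from the convergence to a groupoid. The convergence is only in weak $W^{2,p}$ and $C^{1,\alpha}$, not in a smoother topology, so one needs that the $N$-structure machinery applies in this regularity; this should follow from $C^{1,\alpha}$-convergence of the metrics, which is enough to control holonomy and produce the nilpotent fiber. A secondary technical point is verifying that the limit ${\mathcal E}^\infty$ is genuinely an expanding CMC Einstein flow so that the classification of spatially homogeneous expanding CMC Einstein solutions applies without ambiguity; this follows since $H^\infty = -3/u$ and $L^\infty>0$ are preserved in the limit by Corollary \ref{2.4}.
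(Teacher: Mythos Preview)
Your proposal is correct and follows the same overall strategy as the paper: pass to a subsequential groupoid limit via Corollary~\ref{2.4}, observe that the orbit space is a point so $\dim(\mathfrak{n})=3$, and then identify the limit as Kasner or Taub--nil according to whether the stalk is $\R^3$ or $nil$.

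The only place you diverge from the paper is in the topological step, and there the paper's route is shorter. Rather than invoking Cheeger--Fukaya--Gromov $N$-structure machinery, the paper simply notes that the hypotheses give $|\Rm|_{h(t_i)}\diam^2(X,h(t_i))\to 0$, so $X$ is an almost flat manifold by Gromov's theorem and hence of Thurston type $\R^3$ or $Nil$. To exclude Taub--nil limits in the $\R^3$ case, the paper argues directly: if $X$ has type $\R^3$ then $X$ is a finite quotient of $T^3$, so the local symmetry algebra $\mathfrak{n}^\infty$ of the limit groupoid must be abelian, forcing $\mathcal{E}^\infty\in Kas$. Your contrapositive (``stalk $nil$ forces type $Nil$'') reaches the same conclusion via heavier tools. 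Also, your regularity worry is misplaced: the almost-flat/$N$-structure argument is applied to the original smooth metrics $h(t_i)$ on $X$, not to the limit, so the weak $W^{2,p}$ or $C^{1,\alpha}$ nature of the convergence is irrelevant for this step.
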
 
\begin{proof}
Since $X$ admits a sequence of Riemannian metrics $\{h(t_i)\}_{i=1}^\infty$
with $\lim_{i \rightarrow \infty} |\Rm|_{h(t_i)} \diam^2(X, h(t_i)) = 0$, 
it is an almost flat manifold and
hence of Thurston type $\R^3$ or $Nil$.  Suppose that 
$X$ has Thurston type $\R^3$. Consider any subsequence
of the $t_i$'s, which we relabel as $\{t_i\}_{i=1}^\infty$.  From
Corollary \ref{2.4}, a further subsequence converges to an Einstein flow
${\mathcal E}^\infty$ on an \'etale groupoid
${\mathcal X}^\infty$, with a zero dimensional orbit space from the
diameter bound. Since $X$ is a finite quotient of $T^3$,
the local symmetry algebra ${\frak n}^\infty$
of ${\mathcal X}^\infty$ must be $\R^3$. 
Hence ${\mathcal E}^\infty \in Kas$.

If $X$ has Thurston type $Nil$ then we can again construct 
${\mathcal E}^\infty$. Now the stalk of
${\frak n}^\infty$ is $\R^3$ or $nil$. Hence
${\mathcal E}^\infty \in Kas \cup Taub-nil$.
\end{proof}

\begin{corollary} \label{2.11}
Under the hypotheses of Proposition \ref{2.10},
let $\widetilde{\mathcal E}$ denote the pullback Einstein flow on
the universal cover $\widetilde{X}$. 
Choose $\widetilde{x}_i \in \widetilde{X}$.
Under conclusion (1) of Proposition \ref{2.10}, 
$\lim_{i \rightarrow \infty} \widetilde{\mathcal E}_{t_i}$ approaches the
set of pointed 
Kasner solutions on $\R^3$, in the pointed weak $W^{2,p}$-topology
and the pointed $C^{1,\alpha}$-topology. 
Under conclusion (2) of Proposition \ref{2.10}, 
$\{ \widetilde{\mathcal E}_{t_i}, 
\widetilde{x}_i \}_{i=1}^\infty$ approaches the
set of pointed Kasner and Taub-nil 
solutions on $\R^3$, in the pointed weak $W^{2,p}$-topology
and the pointed $C^{1,\alpha}$-topology. 
\end{corollary}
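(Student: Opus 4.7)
The plan is to deduce Corollary \ref{2.11} directly from Proposition \ref{2.10} by unwrapping the groupoid convergence to the universal cover. Since the type-III assumption and Assumption \ref{2.7} are preserved under lifting to $\widetilde{X}$, and the mean curvature, its time derivatives, and $|\Rm|_T$ are pointwise quantities that are identical on $\widetilde{\mathcal{E}}$ and $\mathcal{E}$, the rescaled pullback flows $\widetilde{\mathcal{E}}_{t_i}$ satisfy all hypotheses of Proposition \ref{1.52} (or rather its extension discussed in Remark \ref{2.3} on the covering level) on every compact subinterval of $(0,\infty)$.

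First I would pass to a subsequence of $\{t_i\}$ and apply Proposition \ref{2.10} to extract an Einstein flow limit $\mathcal{E}^\infty$ on a pointed 3-dimensional \'etale groupoid $(\mathcal{X}^\infty, \mathcal{O}^\infty)$ with zero-dimensional orbit space. By construction, $\mathcal{X}^\infty$ is either $\R^3 \rtimes \Gamma$ (Kasner stalk) or $Nil \rtimes \Gamma$ (Taub-nil stalk), and in both cases the unit space $\mathcal{X}^\infty_{(0)}$ is diffeomorphic to $\R^3$. The simply connected homogeneous Einstein flow living on $\mathcal{X}^\infty_{(0)}$ is precisely the corresponding Kasner or Taub-nil solution in the lists (\ref{2.8})--(\ref{2.9}).

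Second, I would relate the groupoid convergence on $X$ to pointed convergence on $\widetilde{X}$. By the definition of convergence of Riemannian groupoids (cf.\ \cite[Section 5]{Lott (2007)}), a pointed weak $W^{2,p}$-limit on the groupoid level is, in particular, realized on the unit space as a pointed weak $W^{2,p}$-limit of the universal covers of the large balls in $(X, h_{t_i}(1))$ around points projecting to $\mathcal{O}^\infty$. Since $X$ is compact and the groupoid is simply the quotient of $\widetilde{X}$ by its deck transformation pseudogroup, any sequence of basepoints $\widetilde{x}_i \in \widetilde{X}$ lifting (after a deck transformation adjustment) $x_i \in X$ gives the same pointed limit on the unit space up to a choice of basepoint in $\R^3$. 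Thus $(\widetilde{\mathcal{E}}_{t_i}, \widetilde{x}_i)$ subsequentially converges in the pointed weak $W^{2,p}$-topology and pointed $C^{1,\alpha}$-topology to a pointed simply connected homogeneous model: a Kasner solution under hypothesis~(1), and a Kasner or Taub-nil solution under hypothesis~(2).

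The main technical point, rather than an obstacle, is verifying that the passage from groupoid convergence to universal cover convergence is legitimate under the mere $W^{2,p}/C^{1,\alpha}$ regularity available. This is handled by noting that the frame bundle construction used in \cite[Proposition 5.9]{Lott (2007)} produces, over the unit space, diffeomorphisms of balls in $\widetilde{X}$ onto balls in the model $\R^3$ under which the pulled-back metrics, second fundamental forms, and lapse functions converge in the stated topologies; since ``approaches the set of pointed solutions'' in the sense of Definition \ref{1.51} is defined via subsequential extraction, this suffices. Finally, any choice of basepoint $\widetilde{x}_i$ only shifts the limit by an element of the isometry group of the homogeneous model, which is already absorbed into the collection of all pointed Kasner (and Taub-nil) solutions on $\R^3$.
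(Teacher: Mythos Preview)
Your approach is correct and matches the paper's: the paper's proof simply cites \cite[Section~6.2]{Lott (2010)}, which is precisely the passage from groupoid convergence on $X$ to pointed convergence on the universal cover that you spell out in your second through fourth paragraphs. One small slip: in your first paragraph you assert that Assumption~\ref{2.7} is preserved under lifting to $\widetilde{X}$, but the diameter bound obviously fails on a noncompact cover; fortunately you never actually use this, since your real argument proceeds via Proposition~\ref{2.10} and the unit-space identification rather than by directly invoking Proposition~\ref{1.52} on $\widetilde{X}$.
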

\begin{proof}
Given Proposition \ref{2.10}, the corollary follows as in
\cite[Section 6.2]{Lott (2010)}.
\end{proof}

\begin{corollary} \label{2.12}
Under the hypotheses of Proposition \ref{2.10}, there is a sequence
$\{ t^\prime_j\}_{i=1}^\infty$ with $\lim_{j \rightarrow \infty}
t^\prime_j = \infty$ so that the rescalings 
$\left\{ {\mathcal E}_{t^\prime_j} \right\}_{j=1}^\infty$ approach
an Einstein flow of Kasner type on a three dimensional \'etale groupoid,
in the weak $W^{2,p}$-topology and the $C^{1,\alpha}$-topology.
\end{corollary}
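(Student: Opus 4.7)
The plan is to bootstrap from Proposition \ref{2.10} by iterating the rescaling. In the Thurston type $\R^3$ case, conclusion (1) of Proposition \ref{2.10} already delivers the corollary, so only the $Nil$ case requires work. There, a given sequence of rescalings may accumulate on a Taub-nil flow rather than a Kasner flow, and we need to use the fact (noted immediately before Proposition \ref{2.10}) that further rescaling a Taub-nil flow drives it to a Kasner flow with the same exponents $(p_1,p_2,p_3)$.

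\medskip

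Concretely, I would proceed as follows. Fix a sequence $t_i\to\infty$ as in the hypothesis. By Proposition \ref{2.10}, after passing to a subsequence we may assume ${\mathcal E}_{t_i}\to {\mathcal E}^\infty$ in the weak $W^{2,p}$- and $C^{1,\alpha}$-topology, with ${\mathcal E}^\infty$ either of Kasner or of Taub-nil type on a three-dimensional \'etale groupoid with point orbit space. In the Kasner case, take $t'_j = t_j$ and the corollary is immediate. So assume ${\mathcal E}^\infty$ is of Taub-nil type. Then the further rescaled flows ${\mathcal E}^\infty_{s}$ are also well-defined Einstein flows on the same groupoid, and by the Taub-nil $\to$ Kasner asymptotics recalled above, as $s\to\infty$ the family $\{{\mathcal E}^\infty_s\}$ approaches the collection $Kas$ of Kasner flows (on three-dimensional \'etale groupoids with point orbit space) in the weak $W^{2,p}$- and $C^{1,\alpha}$-topology.

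\medskip

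The remaining step is a diagonal extraction. The rescaling operation satisfies $({\mathcal E}_{t_i})_{s} = {\mathcal E}_{s t_i}$, so if $\Phi_{i,s}$ denotes the diffeomorphisms realizing the convergence ${\mathcal E}_{t_i}\to{\mathcal E}^\infty$ on a given compact spacetime slab, then applying the rescaling functor (which is a smooth isometric rescaling of fields and does not disturb weak $W^{2,p}$- or $C^{1,\alpha}$-convergence on any compact slab), we obtain $\lim_i {\mathcal E}_{s t_i} = {\mathcal E}^\infty_s$ for each fixed $s$. Choose $s_k\to\infty$ so that ${\mathcal E}^\infty_{s_k}$ lies within $W^{2,p}$- and $C^{1,\alpha}$-distance $1/k$ of some element of $Kas$ on a compact slab $S_k\times W_k$ exhausting spacetime. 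For each $k$ pick $i(k)$ large enough that ${\mathcal E}_{s_k t_{i(k)}}$ lies within distance $1/k$ of ${\mathcal E}^\infty_{s_k}$ on $S_k\times W_k$, and large enough that $s_k t_{i(k)} > s_{k-1} t_{i(k-1)}$. Setting $t'_j = s_j t_{i(j)}$, the triangle inequality in the weak $W^{2,p}$- and $C^{1,\alpha}$-topologies shows that $\{{\mathcal E}_{t'_j}\}$ approaches $Kas$.

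\medskip

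The main obstacle is verifying that the diagonal passage is clean in these weak topologies on \'etale groupoids, in particular that rescaling commutes with taking limits on each fixed compact slab and that the comparison diffeomorphisms for different values of $s$ can be composed consistently with those provided by Corollary \ref{2.4}. Once this is granted, the argument is essentially formal: the Taub-nil limit is a one-parameter-family "stepping stone" whose asymptotics lie in $Kas$, and a diagonal sequence in the original flow inherits these asymptotics.
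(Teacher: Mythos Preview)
Your proposal is correct and follows essentially the same route as the paper: pass to a subsequential limit ${\mathcal E}^\infty$ via Proposition \ref{2.10}, dispose of the Kasner case immediately, and in the Taub-nil case use the known asymptotics ${\mathcal E}^\infty_s \to {\mathcal E}^{\infty,\infty}\in Kas$ together with a diagonal choice $t'_j = s_j t_{i_j}$. The paper's proof packages your diagonal step in the single phrase ``from the definition of convergence of flows, we can find a subsequence $\{t_{i_j}\}$ so that $\lim_j {\mathcal E}_{s_j t_{i_j}} = {\mathcal E}^{\infty,\infty}$''; your explicit $\epsilon$-$k$ argument and the observation $({\mathcal E}_{t_i})_s = {\mathcal E}_{st_i}$ are exactly what underlies that phrase, and your caveat about composing comparison diffeomorphisms is a point the paper likewise leaves implicit.
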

\begin{proof}
Proposition \ref{2.10} implies that after passing to a subsequence of
$\{t_i\}_{i=1}^\infty$, which we relabel as $\{t_i\}_{i=1}^\infty$,
there is a limit $\lim_{i \rightarrow \infty} {\mathcal E}_{t_i} =
{\mathcal E}^\infty$, with the stalk of ${\frak n}^\infty$ equal to 
$\R^3$ or $nil$. If the stalk is $\R^3$ then
${\mathcal E}^\infty \in Kas$ and we can take
$t^\prime_i = t_i$. Suppose that the stalk is $nil$. Then
${\mathcal E}^\infty \in Taub-nil$.  As
$\lim_{s \rightarrow \infty} {\mathcal E}_s^\infty =
{\mathcal E}^{\infty, \infty}$ for some
${\mathcal E}^{\infty, \infty} \in Kas$, 
we can find a sequence $\{s_j\}_{i=1}^\infty$ with
$\lim_{j \rightarrow \infty} s_j = \infty$ so that
$\lim_{j \rightarrow \infty} {\mathcal E}_{s_j}^\infty =
{\mathcal E}^{\infty, \infty}$. From the definition of
convergence of flows, we can find a subsequence
$\left\{ t_{i_j} \right\}_{j=1}^\infty$ of 
$\left\{ t_{i} \right\}_{i=1}^\infty$ so that
$\lim_{j \rightarrow \infty} {\mathcal E}_{s_j t_{i_j}} = 
{\mathcal E}^{\infty, \infty}$. Putting $t^\prime_j = s_j t_{i_j}$,
the corollary follows. 
\end{proof}

Let $\widetilde{X}$ denote the universal cover of $X$. We give it the
pullback Einstein flow.

\begin{corollary} \label{2.13}
Under the hypotheses of Corollary \ref{2.12},
choose $\widetilde{x}^\prime_j \in \widetilde{X}$.
Then $\{
\widetilde{\mathcal E}_{t^\prime_j}, 
\widetilde{x}^\prime_j \}_{j=1}^\infty$ 
approaches the
set of pointed 
Kasner solutions on $\R^3$, in the pointed weak $W^{2,p}$-topology
and the pointed $C^{1,\alpha}$-topology.
\end{corollary}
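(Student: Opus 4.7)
The plan is to transfer the groupoid-level convergence produced by Corollary \ref{2.12} to the universal cover, by applying the unwrapping procedure of \cite[Section 6.2]{Lott (2010)} already invoked in Corollary \ref{2.11}. By Corollary \ref{2.12} we may pass to a subsequence along which $\lim_{j \to \infty} {\mathcal E}_{t'_j} = {\mathcal E}^{\infty,\infty}$, where ${\mathcal E}^{\infty,\infty}$ is of Kasner type on a three-dimensional \'etale groupoid ${\mathcal X}^{\infty,\infty} = \R^3 \rtimes \Gamma$ whose unit space is $\R^3$ with a Kasner metric; the ``universal cover'' of this groupoid is simply $\R^3$ equipped with that Kasner flow.

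I would then unpack what the groupoid convergence provides at the level of $\widetilde{X}$. The comparison diffeomorphisms $\phi_{S,W,j}$ appearing in Definitions \ref{1.49}--\ref{1.51} on the groupoid side give, for each compact spacetime region, local $W^{3,p}$-diffeomorphisms into the rescaled manifold $(X, h_{t'_j})$ that pull back $(L, h, K)$ to the Kasner fields in weak $W^{2,p}$. These local diffeomorphisms lift canonically to maps between balls in the universal cover $\R^3$ of the unit space and balls in the rescaled universal cover $(\widetilde{X}, \widetilde{h}_{t'_j})$. The type-III assumption yields uniform curvature and injectivity-radius bounds on $\widetilde{X}$ in the rescaled metrics, so a standard pointed compactness argument for $(\widetilde{X}, \widetilde{x}'_j, \widetilde{g}_{t'_j})$ extracts a further subsequential pointed weak $W^{2,p}$- and $C^{1,\alpha}$-limit; matching with the lifted $\phi_{S,W,j}$ identifies this limit with a pointed Kasner solution on $\R^3$.

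The one point that requires care is the basepoint: different choices $\widetilde{x}'_j$ on $\widetilde{X}$ may correspond, under the lifted comparison maps, to different points of $\R^3$, and hence to pointed Kasner solutions that differ by a spatial translation. Since the Kasner metric is translation-invariant, every such limit is again a pointed Kasner solution on $\R^3$, which is exactly what the conclusion asserts. Finally, because the hypotheses already guarantee that \emph{every} subsequence of $\{t'_j\}$ admits, by the same argument applied to a further subsequence, a pointed Kasner limit, Definition \ref{1.51} gives that the family $\{(\widetilde{\mathcal E}_{t'_j}, \widetilde{x}'_j)\}$ approaches the set of pointed Kasner solutions on $\R^3$ in both topologies.

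I expect the main obstacle to be the bookkeeping needed to lift weak $W^{2,p}$-convergence on the groupoid to weak $W^{2,p}$-convergence on $\widetilde{X}$ uniformly in the basepoint, especially tracking how the comparison maps $\phi_{S,W,j}$ interact with the deck transformation action of $\pi_1(X)$. Once one verifies, as in \cite[Section 6.2]{Lott (2010)}, that this lifting is compatible with the Riemannian groupoid convergence framework of \cite[Section 5]{Lott (2007)}, the rest of the argument is the routine translation of groupoid statements into statements on universal covers.
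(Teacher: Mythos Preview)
Your proposal is correct and takes essentially the same approach as the paper: the paper's proof is the single line ``Given Corollary \ref{2.12}, the corollary follows as in \cite[Section 6.2]{Lott (2010)},'' and your write-up is precisely an unpacking of what that citation entails (lifting the groupoid comparison maps to the universal cover, using the type-III bounds to extract a pointed limit on $\widetilde{X}$, and handling the basepoint ambiguity via translation-invariance of the Kasner metric). One minor quibble: at the end you invoke Definition \ref{1.51}, but since $\{t'_j\}$ is a sequence rather than a one-parameter family, Definition \ref{1.50} is the relevant one.
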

\begin{proof}
Given Corollary \ref{2.12}, the corollary follows as in
\cite[Section 6.2]{Lott (2010)}.
\end{proof}

\begin{remark} \label{2.14}
Under the hypotheses of Proposition \ref{2.10}, it does not immediately
follow that as $i \rightarrow \infty$, the rescaled Einstein flows
$\{ {\mathcal E}_{t_i} \}_{i=1}^\infty$ approach an Einstein flow 
of Kasner type.
For example, it is conceivable that there is an infinite number of
increasingly sparse subsequences $\{ t_{j,m} \}_{j=1}^\infty$ so that
for each $m$, the limit
$\lim_{j \rightarrow \infty} {\mathcal E}_{t_{j,m}}$ exists and is
always the same Taub-nil solution. We do not know if there is an example
where the rescaled flows
$\{ {\mathcal E}_{t_i} \}_{i=1}^\infty$ approach a Taub-nil
solution.
\end{remark}

\subsection{One dimensional orbit space} \label{subsect2.3}

In this subsection we deal with the case when a limiting Einstein
flow ${\mathcal E}^\infty$
is on an \'etale groupoid whose orbit space is one dimensional,
i.e. is a circle or an interval. The goal is to show that after
performing a further rescaling, there is a new limit 
${\mathcal E}^{\infty,\infty}$
which is a Taub-flat flow;
hence an appropriate 
rescaling limit of the original Einstein flow ${\mathcal E}$
is a Taub-flat flow.

If ${\mathcal E}$ has 
$\liminf_{t \rightarrow \infty} t^{-1} \diam(X, h(t)) = 0$ then we
can consider the flow to be treated by
Corollary \ref{2.12}. Hence we assume that $\diam(X, ht)) \ge c t$
for all $t \in [t_0, \infty)$ and some $c > 0$. Then the limiting flow
${\mathcal E}^\infty$ satisfies
$\diam \left( {\mathcal X}^\infty, h^\infty(u) \right) \ge c u$ 
for all $u > 0$.
This means that any rescaling limit ${\mathcal E}^{\infty,\infty}$ of 
${\mathcal E}^\infty$ also has
a one dimensional orbit space.

Using the type-III assumption on the original flow ${\mathcal E}$, we
argue that a rescaling limit ${\mathcal E}^{\infty,\infty}$ of
${\mathcal E}^{\infty}$ exists.  Then the issue is to show that it
is a Taub-flat flow.  In order to do this, we need a monotonic quantity.
The Einstein flow ${\mathcal E}^{\infty}$ has $\dim({\frak n}) = 2$, i.e.
has local $\R^2$-symmetries.  The metric in the $\R^2$-directions is
locally given by a $2 \times 2$ matrix $G$, whose determinant is
a well-defined function on the two dimensional Lorentzian manifold
$(0, \infty) \times X^\infty$. We can assume that
${\mathcal E}^\infty$ is not already a Taub-flat flow. Then it is
known that $\nabla \det G$ is a nonvanishing timelike vector on
$(0, \infty) \times X^\infty$. The level sets of $\det G$ are 
spacelike submanifolds; we assume that they are compact.  Then we can
use the monotonic quantities defined in Subsection \ref{subsectA.3}.

We obtain an integral convergence result along the lines of
Proposition \ref{1.37}. To go further, we make the additional
assumption that there is a time function $\widehat{u}$ for the foliation
of $(0, \infty) \times X^\infty$ by level sets of $\det(G)$, which is
comparable to $u$.
Using this time function and the monotonic quantities
from Subsection \ref{subsectA.3}, we deduce that 
${\mathcal E}^{\infty,\infty}$ is a Taub-flat flow. Hence there are
arbitrarily large future time intervals on which the
original flow ${\mathcal E}$ is modelled, in a scale invariant way,
by a Taub-flat flow.

To begin,
the Taub-flat vacuum solution is the isometric product of $\R^2$ with
the Lorentzian cone over $H^1 \cong \R^1$. Suppose that ${\mathcal X}$ is a
three dimensional cross-product groupoid $(H^1 \times \R^2) \rtimes \Gamma$,
where $\Gamma$ is a group (with the discrete topology) that contains
the translations $L\Z \times \R^2$ as a finite-index subgroup, for
some $L > 0$. 
We say that an Einstein flow on
${\mathcal X}$ is of Taub-flat type if the corresponding Lorentzian
groupoid is equivalent to the cross product of $\Gamma$ with
the Taub-flat solution.

Let ${\mathcal E}$ be an Einstein flow as in the hypotheses of Corollary 
\ref{2.4}, satisfying Assumption \ref{2.7}.  
If $X$ has Thurston type $Sol$ and $\{t_i\}_{i=1}^\infty$ is a sequence
with $\lim_{i \rightarrow \infty} t_i = \infty$ then a 
Gromov-Hausdorff limit of $\{ (X, t_i^{-2} h(t_i)) \}_{i=1}^\infty$
cannot be three dimensional by Proposition \ref{1.57}.
It cannot be zero dimensional (or else $X$ would have Thurston type
$\R^3$ or $Nil$) and it cannot be two dimensional (or else $X$ would be
a Seifert $3$-manifold).  Thus if 
$X$ has Thurston type $Sol$ then a 
Gromov-Hausdorff limit of $\{ (X, t_i^{-2} h(t_i)) \}_{i=1}^\infty$
must be one dimensional.
More generally, if $\{ (X, t_i^{-2} h(t_i)) \}_{i=1}^\infty$
has a one dimensional Gromov-Hausdorff limit
then $X$ must have Thurston type $Sol$, $\R^3$ or $Nil$.  

With reference to Corollary \ref{2.4}, 
suppose that the groupoid ${\mathcal X}^\infty$ has
$\dim({\frak n}) = 2$. Then the orbit space is one dimensional.
If $X$ has Thurston type $Sol$ then there is some $c > 0$ so that
$\diam(X, h(t)) \ge c t$ for all $t \ge t_0$, as $X$ is not almost
flat.  If $X$ has Thurston type $\R^3$ or $Nil$, and
$\liminf_{t \rightarrow \infty} t^{-1} \diam(X, h(t)) = 0$, then
we can consider the Einstein flow to be covered by Corollary \ref{2.12}.
Hence we make the following assumption.

\begin{assumption} \label{2.15}
For some $c > 0$, we have $\diam(X, h(t)) \ge c t$ for
all $t \in [t_0, \infty)$.
\end{assumption}

With reference to Corollary \ref{2.4},
letting $X^\infty$ denote the orbit space of ${\mathcal X}^\infty$, we
loosely write $\diam({\mathcal X}^\infty, h^\infty(u))$ for the
diameter of $X^\infty$ with the induced metric.
Then 
\begin{equation} \label{2.16}
\diam({\mathcal X}^\infty, h^\infty(u)) \ge cu
\end{equation}
for all $u \in (0, \infty)$.

The orbit space $X^\infty$ is a one dimensional orbifold
\cite[Pf. of Proposition 3.5]{Lott (2010)}.  Hence it is
$S^1$ or $S^1/\Z_2$. In the latter case, we can pullback under the orbifold
covering map $S^1 \rightarrow S^1/\Z_2$ to reduce the statements to the
$S^1$ case.  Hence we assume that the orbit space $X^\infty$ is 
diffeomorphic to $S^1$.

The coordinate function $u$ on $(0, \infty) \times 
{\mathcal X}^\infty$ pulls back
from a function on $(0, \infty) \times S^1$, which we again denote by $u$.
Similarly, the lapse function $L$ pulls back from a function on
$(0, \infty) \times S^1$, which we again denote by $L$. 
As in Subsection \ref{A.1},
the Lorentzian 
metric corresponding to the Einstein flow
${\mathcal E}^\infty$ can locally be written as
\begin{equation} \label{2.17}
-L^2 du^2 + h d\theta^2 + \sum_{I,J=1}^2 G_{IJ} (db^I+A^I) (db^J + A^J).
\end{equation}
We note that $\det(G)$ is a well defined function on $(0, \infty) \times S^1$,
since the flat twisting bundle $e$ on $(0, \infty) \times S^1$ has holonomy
in $\SL(2, \Z)$ \cite[Proof of Lemma 6.1]{Lott (2010)}.
Put $g = -L^2 du^2 + h d\theta^2$.

If ${\mathcal E}^\infty$ is not flat then
the function $\det(G)$ has a timelike gradient on
$(0, \infty) \times S^1$; see 
\cite[Proof of Proposition 5.1]{LeFloch-Smulevici (2015)} and
references therein. If ${\mathcal E}^\infty$ is flat then the results
of this subsection will be valid, so we assume that
${\mathcal E}^\infty$ is not flat. Then $\det(G)$ has level sets that
foliate $(0, \infty) \times S^1$.
We assume that $\nabla \det(G)$ is future-directed; 
this holds, for example, in the Ellis-MacCallum $Sol$-solution 
\cite[Section 9.2.3]{Ellis-Wainwright (1997)}. Then we can choose
a time function $\widehat{u}$ on $(0, \infty) \times S^1$ that
is an increasing function of $\det(G)$.

\begin{assumption} \label{2.18}
There is an open set $U \subset (0, \infty) \times S^1$ containing
$[u_0, \infty) \times S^1$ for some $u_0 < \infty$, and a proper function 
$\widehat{u} \in W^{3,p}_{loc}(U)$ so that
\begin{enumerate}
\item $\nabla \widehat{u}$ is timelike, and
\item On $U$, $\det(G)$ is a function of $\widehat{u}$.
\end{enumerate}
\end{assumption}

Assumption \ref{2.18} implies the level sets of $\widehat{u}$ are compact
manifolds.
We can assume that they are diffeomorphic to $S^1$.
Then for suitable $\widehat{u}_0 < \infty$, the space
$\widehat{u}^{-1}([\widehat{u}_0,
\infty))$ is $W^{3,p}_{loc}$-diffeomorphic to 
$[\widehat{u}_0, \infty) \times S^1$. We write the
Lorentzian metric $g$ on $[\widehat{u}_0, \infty) \times S^1$, in terms of
$\widehat{u}$ and $\theta$, as
$\widehat{g} = - \widehat{L}^2 d\widehat{u}^2 + \widehat{h} d\theta^2$.

If the curvature $F$ of the $\R^N$-valued connection $A$ vanishes then
from (\ref{A.20}),
\begin{equation} \label{2.19}
\int_{\widehat{u}_0}^\infty \frac{1}{\sqrt{\det G}} \widehat{L}^{-1} 
\Tr \left( \left( G^{-1} \partial_{\widehat{u}} G \right)^2 \right)  
\dvol(S^1, \widehat{h}(\widehat{u})) \: d\widehat{u} < \infty.
\end{equation}

Given $s > 0$, define $\widehat{L}_s$ and $\widehat{h}_s$ as in 
(\ref{1.36}). Put
${G}_s(\widehat{v}) = {G}(s\widehat{v})$. 

\begin{proposition} \label{2.20}
Suppose that $F = 0$.
Given $\Lambda > 1$, we have
\begin{equation} \label{2.21}
\lim_{s \rightarrow \infty} 
\frac{1}{\sqrt{\det G_s}} \widehat{L}_s^{-1} 
\Tr \left( \left( G_s^{-1} \partial_{\widehat{v}} G_{s} \right)^2 \right)
d\widehat{v} \: \dvol(S^1, \widehat{h}_s(\widehat{v}))  = 0
\end{equation}
in norm convergence of measures on  $[\Lambda^{-1}, \Lambda] \times S^1$.
\end{proposition}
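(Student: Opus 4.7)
The plan is to follow the contradiction template used in the proof of Proposition~\ref{1.37}: extract from the assumed failure of (\ref{2.21}) a sequence of disjoint time windows on which the unrescaled integrand in (\ref{2.19}) contributes a definite amount, contradicting the finiteness of that improper integral.

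The key preparatory step is a scale-invariance identity for the integrand. Using $\widehat{L}_s(\widehat{v})=\widehat{L}(s\widehat{v})$, $\widehat{h}_s(\widehat{v})=s^{-2}\widehat{h}(s\widehat{v})$ and $G_s(\widehat{v})=G(s\widehat{v})$, I would first verify that
\begin{equation*}
\int_{\Lambda^{-1}}^{\Lambda}\!\int_{S^1} \frac{\Tr\bigl((G_s^{-1}\partial_{\widehat{v}}G_s)^2\bigr)}{\sqrt{\det G_s}\,\widehat{L}_s}\,d\widehat{v}\,\dvol(S^1,\widehat{h}_s) = \int_{s\Lambda^{-1}}^{s\Lambda}\!\int_{S^1} \frac{\Tr\bigl((G^{-1}\partial_{\widehat{u}}G)^2\bigr)}{\sqrt{\det G}\,\widehat{L}}\,d\widehat{u}\,\dvol(S^1,\widehat{h}).
\end{equation*}
The factors of $s$ should balance cleanly: the trace picks up $s^2$ from $G_s^{-1}\partial_{\widehat{v}}G_s = s\,(G^{-1}\partial_{\widehat{u}}G)(s\widehat{v})$, the volume form on $S^1$ contributes $s^{-1}$ via $\widehat{h}_s = s^{-2}\widehat{h}$, and the substitution $d\widehat{v}=s^{-1}d\widehat{u}$ supplies a further $s^{-1}$; the $(\det G)^{-1/2}$ and $\widehat{L}^{-1}$ factors are scale-neutral under the given conventions.

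Granted this identity, the contradiction is immediate. Because the integrand is nonnegative, norm convergence of measures on $[\Lambda^{-1},\Lambda]\times S^1$ coincides with convergence of total mass. If (\ref{2.21}) failed, there would exist $\epsilon>0$ and $s_i\to\infty$ for which the right-hand side above, with $s=s_i$, is bounded below by $\epsilon$. Passing to a subsequence so that the intervals $[s_i\Lambda^{-1},s_i\Lambda]$ are pairwise disjoint, summing the contributions over $i$ yields $+\infty$ for the single integral in (\ref{2.19}), contradicting its finiteness.

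The only real obstacle I anticipate is careful bookkeeping of the scaling conventions: the prescription $G_s(\widehat{v})=G(s\widehat{v})$ omits the overall spatial $s^{-2}$ factor that $\widehat{h}$ carries, so one must confirm the scale-invariance of the combination $(\det G)^{-1/2}\widehat{L}^{-1}\Tr\!\bigl((G^{-1}\partial_{\widehat{u}}G)^2\bigr)\,d\widehat{u}\,\dvol(S^1,\widehat{h})$ directly from the definitions in (\ref{1.36}) together with the convention for $G_s$. Once this algebraic check is carried out, the argument is literally parallel to the treatment of $|K^0|^2_s L_s$ in Proposition~\ref{1.37}.
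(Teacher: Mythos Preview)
Your proposal is correct and follows exactly the approach the paper indicates (``similar to that of Proposition~\ref{1.37}''): the scale-invariance identity you verify is precisely what is needed, and the disjoint-interval contradiction against (\ref{2.19}) then goes through verbatim. Your bookkeeping of the powers of $s$ is accurate --- the $s^2$ from the trace, the $s^{-1}$ from the one-dimensional volume form, and the $s^{-1}$ from the substitution $d\widehat v = s^{-1}\,d\widehat u$ cancel exactly, making the identity cleaner here than in Proposition~\ref{1.37} (no stray factor of $\Lambda^{-1}$ is needed).
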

\begin{proof}
The proof is similar to that of Proposition \ref{1.37}.  We omit the details.
\end{proof}

\begin{remark} \label{2.22}
From Subsection \ref{subsectA.3}, if $G_s^{-1} \partial_{\widehat{v}} G_{s} 
= 0$ then $G$ is locally constant in $\widehat{v}$ and $\theta$, and 
$\widehat{g}$ is flat.
Hence Proposition \ref{2.20} can be interpreted as saying that
in an integral sense, the original flow ${\mathcal E}$ is approaching
a flow of Taub-flat type. 
If $F \neq 0$ then there is a result analogous to Proposition
\ref{2.20}, except more
complicated to state, using (\ref{A.27}) and (\ref{A.28}).
\end{remark}

We now make a further assumption about $\widehat{u}$, saying that it is
comparable to $u$.

\begin{assumption} \label{2.23}
In addition to Assumption \ref{2.18}, suppose that
there is a constant $\Lambda < \infty$ so that
\begin{enumerate}
\item
$\Lambda^{-1} u \le \widehat{u} \le \Lambda u$,
\item For all $r > u_0$, $p < \infty$ and $k + l \le 3$,
\begin{equation} \label{2.24}
\parallel \nabla_x^k \partial_u^l \widehat{u} \parallel_{
L^p((r, 2r) \times X^\infty)
} \le \const r^{1-k-l+\frac{2}{p}} and
\end{equation} 
\item 
$\frac{{g}(\nabla u, \nabla \widehat{u})}
{|\nabla u|_{{g}}  | \nabla \widehat{u}|_{{g}}} 
\le - \Lambda^{-1}$.
\end{enumerate}
\end{assumption}

\begin{remark}
The exponent on the right-hand side of (\ref{2.24}) ensures scale invariance.
\end{remark}

\begin{proposition} \label{2.25}
If Assumption \ref{2.23} holds then there is a sequence
$\{t_j^\prime \}_{j=1}^\infty$ with $\lim_{j \rightarrow \infty} t^\prime_j =
\infty$, and an Einstein flow
${\mathcal E}^{\infty, \infty}$ of Taub-flat type,
so that the rescalings ${\mathcal E}_{t^\prime_j}$ of ${\mathcal E}$ satisfy
$\lim_{j \rightarrow \infty}
{\mathcal E}_{t^\prime_j} = {\mathcal E}^{\infty, \infty}$.
\end{proposition}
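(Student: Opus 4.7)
The plan is to perform a second rescaling of the already-extracted limit ${\mathcal E}^\infty$ to produce a Taub-flat flow ${\mathcal E}^{\infty,\infty}$, and then diagonalize with the first rescaling to realize it as a rescaling limit of the original flow ${\mathcal E}$. The mechanism for identifying the Taub-flat limit is finiteness of the monotonic integral (\ref{2.19}), which measures how far $G$ is from being locally constant in the $\widehat{u}$-direction, together with the scale-invariance furnished by Assumption \ref{2.23}.

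First I would rescale ${\mathcal E}^\infty$ itself. The type-III bound on ${\mathcal E}$ passes to ${\mathcal E}^\infty$, so Proposition \ref{2.2} applies to $\{{\mathcal E}^\infty_{s_k}\}$ for any $s_k \to \infty$ and yields a subsequential pointed weak $W^{2,p}$- and $C^{1,\alpha}$-limit ${\mathcal E}^{\infty,\infty}$ on a three-dimensional \'etale groupoid. The inequality $\diam({\mathcal X}^\infty, h^\infty(u)) \ge cu$ from (\ref{2.16}) is scale-invariant, so the limit has a non-point orbit space; a three-dimensional orbit space is excluded by Proposition \ref{1.57} applied in the groupoid category (via Proposition \ref{2.5}); and a two-dimensional orbit space is excluded because the rank of the isotropy sheaf ${\frak n}^\infty$ is scale-invariant. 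Hence ${\mathcal E}^{\infty,\infty}$ has a one-dimensional orbit space and inherits a structure of the form $(G^{\infty,\infty}, A^{\infty,\infty})$.

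Next I would use finiteness of the integral in (\ref{2.19}) together with the comparability of $\widehat{u}$ with $u$ from Assumption \ref{2.23} to select the sequence $s_k$. A dyadic decomposition of $[\widehat{u}_0, \infty)$ combined with the scale-invariant bounds (\ref{2.24}), mirroring the proof of Proposition \ref{2.20}, produces $s_k \to \infty$ along which
\begin{equation*}
\frac{1}{\sqrt{\det G_{s_k}}} \, \widehat{L}_{s_k}^{-1} \, \Tr\bigl((G_{s_k}^{-1} \partial_{\widehat{v}} G_{s_k})^2\bigr) \, d\widehat{v} \, \dvol(S^1, \widehat{h}_{s_k}(\widehat{v}))
\end{equation*}
vanishes in $L^1$ on $[\Lambda^{-1}, \Lambda] \times S^1$ for every $\Lambda > 1$. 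The uniform $C^{1,\alpha}$-convergence of $(G_{s_k}, \widehat{L}_{s_k}, \widehat{h}_{s_k})$ then forces $(G^{\infty,\infty})^{-1} \partial_{\widehat{v}} G^{\infty,\infty} \equiv 0$ on the limit. By Remark \ref{2.22} (together with the $F \ne 0$ analogue from (\ref{A.27})--(\ref{A.28}) if needed), an $\R^2$-invariant Einstein flow whose reduced $G$ is $\widehat{v}$-constant and whose twist $F$ vanishes is locally isometric to Taub-flat, so ${\mathcal E}^{\infty,\infty}$ is of Taub-flat type.

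Finally, a diagonal argument of the form used in Corollary \ref{2.12} extracts $t^\prime_j = s_{k_j} t_{i_j}$ so that ${\mathcal E}_{t^\prime_j} \to {\mathcal E}^{\infty,\infty}$ in the pointed weak $W^{2,p}$-topology. The main obstacle is step three in the groupoid setting: one has to verify that the level sets of $\det G_{s_k}$ and the time function $\widehat{u}_{s_k}$ converge strongly enough to push the $L^1$-vanishing of the monotonic integrand into the limit geometry, and that the comparability bounds of Assumption \ref{2.23} remain genuinely scale-invariant under the implicit identification diffeomorphisms of the weak $W^{2,p}$-convergence. A secondary subtlety is the possibility $F \ne 0$, where one must invoke the more intricate variant of (\ref{2.19}) alluded to in Remark \ref{2.22} rather than the clean statement of Proposition \ref{2.20}.
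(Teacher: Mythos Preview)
Your overall architecture --- rescale ${\mathcal E}^\infty$ a second time, extract ${\mathcal E}^{\infty,\infty}$, then diagonalize to get $t'_j = s_{k_j} t_{i_j}$ --- matches the paper exactly. For the case $F=0$ your approach via the finite integral (\ref{2.19}) is essentially dual to the paper's: the paper works directly with the scale-invariant monotone quantity $(\partial_{\widehat u}\ln\det G)\int_{\widehat X^\infty}\widehat L^{-1}\dvol$ (or equivalently $\widehat{\mathcal E}$ of (\ref{A.19}), see Remark \ref{2.28}), observes that a bounded monotone scale-invariant quantity is constant on any rescaling limit, and then invokes the rigidity statement of Subsubsection \ref{subsubsectA.3.1}. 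You instead try to show the \emph{derivative} of that quantity vanishes in the limit via $L^1$-smallness plus $C^{1,\alpha}$-convergence. This is workable but less clean: the factor $1/\sqrt{\det G_s}$ in your integrand is not invariant under the $j$-dependent bundle automorphisms that are part of the convergence, so the $L^1$-vanishing you produce before renormalization does not directly transfer to the limit. The paper's formulation sidesteps this because $(\partial_{\widehat u}\ln\det G)\int\widehat L^{-1}\dvol$ is insensitive to those automorphisms.

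The genuine gap is the case $F\neq 0$. You treat it as a mild variant handled by ``the more intricate'' monotone integral from Remark \ref{2.22}, expecting the same conclusion. The paper's argument is quite different: when $F\neq 0$ one uses the scale-invariant monotone functional $\widehat{\mathcal E}_K$ of (\ref{A.26}); constancy of $\widehat{\mathcal E}_K$ on ${\mathcal E}^{\infty,\infty}$ forces the two-dimensional base to be of the explicit form (\ref{A.29}), whose $S^1$-fiber length stays \emph{bounded} as $R\to\infty$. This contradicts the lower diameter bound (\ref{2.16}) inherited from Assumption \ref{2.15}. So $F\neq 0$ is ruled out by contradiction, not by showing the limit is Taub-flat directly. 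Your sketch does not contain this step, and without it the $F\neq 0$ case is not closed. Finally, a smaller point: once you know ${\mathcal E}^{\infty,\infty}$ is flat with $G$ locally constant, you still need the short argument (last paragraph of the paper's proof) identifying the flat two-dimensional base as the Lorentzian cone over a circle, which is what makes the flow Taub-flat rather than merely flat.
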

\begin{proof}
Let $\{s_j\}_{j=1}^\infty$ be a sequence with $\lim_{j \rightarrow \infty} 
s_j = \infty$. Since $\lim_{i \rightarrow \infty} {\mathcal E}_{t_i} =
{\mathcal E}^\infty$, for fixed $j$, we have
$\lim_{i \rightarrow \infty} {\mathcal E}_{s_j t_{i}} =
{\mathcal E}^\infty_{s_j}$.
If $\{ t_{i_j} \}_{j=1}^\infty$ is a subsequence of $\{ t_{i} \}_{i=1}^\infty$
then after passing to a subsequence of $j$'s, we can assume that
$\lim_{j \rightarrow \infty} {\mathcal E}_{s_j t_{i_j}} =
{\mathcal E}^{\infty, \infty}$ for an Einstein flow 
${\mathcal E}^{\infty, \infty}$ on an \'etale groupoid
${\mathcal X}^{\infty, \infty}$, defined on 
the time interval $(0, \infty)$.  
From our definition of convergence of flows,
we can choose $\{ t_{i_j} \}_{j=1}^\infty$ so that
$\lim_{j \rightarrow \infty} {\mathcal E}^\infty_{s_j} =
\lim_{j \rightarrow \infty} {\mathcal E}_{s_j t_{i_j}} =
{\mathcal E}^{\infty, \infty}$.
(The rescaling in
${\mathcal E}^\infty_{s_j}$ involves pullback with respect to 
$u \rightarrow s_j u$ and a $j$-dependent diffeomorphism $\phi_j$ of
$S^1$, along with a $j$-dependent automorphism of the flat 
$\R^2$-vector bundle on $S^1$.)
From Assumption \ref{2.15},
the orbit space of ${\mathcal X}^{\infty, \infty}$ is one dimensional. 
The Lorentzian metric corresponding to the Einstein flow
${\mathcal E}^{\infty, \infty}$ can be locally written as
\begin{equation} \label{2.26}
- (L^\infty)^2 (du^\infty)^2 + h^\infty (d\theta^\infty)^2 + 
\sum_{I,J=1}^2 G^\infty_{IJ} (db^I+A^{\infty,I}) (db^J + A^{\infty,J}).
\end{equation}
We will show this is an Einstein flow of Taub-flat type and take
$t^\prime_j = s_j t_{i_j}$.

Let $\widehat{X}^\infty$ denote the level sets of $\widehat{u}$.
Let $\widehat{u}_{s_j}$ be $\frac{1}{s_j}$ times
the pullback of $\widehat{u}$ with respect to
$u  \rightarrow s_j u$ and $\phi_j \in \Diff(X^\infty)$.
From Assumption \ref{2.23}(1,2), after passing
to a subsequence we can assume that $\lim_{j \rightarrow \infty}
\widehat{u}_{s_j} = \widehat{u}^\infty$ in the weak topology on
$W^{3,p}_{loc}$, for some
$\widehat{u}^\infty \in W^{3,p}_{loc}((0, \infty) \times 
X^{\infty, \infty})$.
From Assumption \ref{2.18} and Assumption \ref{2.23}(3), the gradient
$\nabla \widehat{u}^\infty$ is timelike, and $\det(G)$ is
a function of $\widehat{u}^\infty$.
We can write the Lorentian metric on $(0, \infty) \times X^{\infty, \infty}$ 
as
\begin{equation} \label{2.27}
- (\widehat{L}^\infty)^2 (d\widehat{u}^\infty)^2 + \widehat{h}^\infty 
(d\widehat{\theta}^\infty)^2.
\end{equation}

Suppose first that the curvature $F^\infty$ of $A^\infty$ vanishes.
Applying Subsection \ref{A.2} to the flow ${\mathcal E}^\infty$, we know that
$(\partial_{\widehat{u}} \ln \det G) 
\int_{\widehat{X}^\infty} \widehat{L}^{-1} \dvol_{\widehat{X}^\infty}$ is
monotonically nonincreasing in $\widehat{u}$. It is clearly nonnegative. Since 
$\dim(\widehat{X}^\infty) = 1$,
the expression is invariant under rescaling.
Note that in forming the limit
$\lim_{j \rightarrow \infty} {\mathcal E}^\infty_{s_j} =
{\mathcal E}^{\infty, \infty}$, we are allowed to perform 
$j$-dependent automorphisms of
the flat $2$-dimensional vector bundle on $X^\infty$.
These automorphisms can change
$\ln \det G$ by a $j$-dependent additive constant, which vanishes
upon taking the $\widehat{u}$-derivative.  

Given $a \in (0, \infty)$, the level set $(\widehat{u}^\infty)^{-1}(a)
\subset (0, \infty) \times X^{\infty, \infty}$ is
the limit of rescalings of level sets $\widehat{u}^{-1}(s_j a)
\subset (0, \infty) \times X^{\infty}$.
It follows that the monotonic quantity
$(\partial_{\widehat{u}^\infty} \ln \det G^\infty) 
\int_{\widehat{X}^{\infty, \infty}} 
(\widehat{L}^\infty)^{-1} \dvol_{\widehat{X}^{\infty, \infty}}$ is constant in 
$\widehat{u}^\infty$. By Subsubsection \ref{subsubsectA.3.1}, we conclude that
${\mathcal E}^{\infty, \infty}$ is a flat solution.

\begin{remark} \label{2.28}
We could have reached the same conclusion using the functional
$\widehat{\mathcal E}$ of (\ref{A.19}). 
\end{remark}

Now suppose that $F^\infty \neq 0$. 
After pulling back from a finite cover of $S^1$ if necessary, we can
assume that the holonomy $H \in \SL(2, \R)$ over $S^1$ of the flat 
vector bundle $E$ has real positive eigenvalues.
The functional $\widehat{\mathcal E}_K$ of (\ref{A.26}) is
scale invariant and monotonically nonincreasing. It follows that
the corresponding functional for ${\mathcal E}^{\infty, \infty}$ is
constant. In terms of Subsubsection \ref{subsubsectA.3.2}, 
the metric (\ref{2.26}) equals
(\ref{A.29}), after a change of variable from $u^\infty$ to $R$. 
From Corollary \ref{2.4}, the function $L^\infty$ is uniformly bounded
below and so the metric (\ref{2.26}) admits future-directed timelike curves
along which the proper time goes to infinity.
Hence under the change of variable
from $u^\infty$ to $R$, we must have $\lim_{u^\infty \rightarrow \infty}
R(u^\infty) = \infty$. 
From (\ref{A.29}), the length of the $S^1$-fiber is uniformly
bounded as $R$ goes to infinity.  This contradicts (\ref{2.16}), showing
that $F^\infty$ cannot be nonzero.

Hence
${\mathcal E}^{\infty, \infty}$ is a flat solution.
There is a foliation of $(0, \infty) \times X^{\infty, \infty}$ by circles
$\{C_v\}_{v \in (0, \infty)}$ of constant geodesic curvature $- \:
\frac{1}{v}$. The lift $\widetilde{C}_v$ of such a circle to the
universal cover $(0, \infty) \times \widetilde{X}^{\infty, \infty}$ is
an embedded curve with a neighborhood that is isometric to a
neighborhood of a hyperbola, of constant geodesic curvature $- \:
\frac{1}{v}$, in the flat Lorentzian plane $\R^{1,1}$.  As $(0,
\infty) \times \widetilde{X}^{\infty, \infty}$ is foliated by such
lifts, it must be isometric to the chronological future of the origin
in $\R^{1,1}$, with its foliation by hyperbolas.  Then $(0, \infty)
\times {X}^{\infty, \infty}$ is the Lorentzian cone over a circle, and
${\mathcal E}^{\infty,\infty}$ is an Einstein flow of Taub-flat type.
\end{proof}

Let $\widetilde{X}$ denote the universal cover of $X$. We give it the
pullback Einstein flow.

\begin{corollary} \label{2.29}
Under the hypotheses of Proposition \ref{2.25},
choose $\widetilde{x}^\prime_j \in \widetilde{X}$.
Then $\{
( \widetilde{\mathcal E}_{t^\prime_j}, 
\widetilde{x}^\prime_j) \}_{j=1}^\infty$ 
approaches the
set of Taub-flat Einstein flows
on $\R^3$, in the pointed weak $W^{2,p}$-topology
and the pointed $C^{1,\alpha}$-topology.
\end{corollary}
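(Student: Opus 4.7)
The plan is to reduce Corollary \ref{2.29} to Proposition \ref{2.25} by the standard universal-cover unwrapping procedure for convergence of Riemannian groupoids, which is exactly the route taken in Corollaries \ref{2.11} and \ref{2.13} via \cite[Section 6.2]{Lott (2010)}. Concretely, Proposition \ref{2.25} produces $\{t^\prime_j\}$ with $t^\prime_j \to \infty$ and a limit Einstein flow $\mathcal{E}^{\infty,\infty}$ of Taub-flat type on a three-dimensional \'etale groupoid $\mathcal{X}^{\infty,\infty}$, which by the definition in Subsection \ref{subsect2.3} is a cross product $(H^1 \times \R^2) \rtimes \Gamma$ whose associated Lorentzian groupoid is equivalent to $\Gamma$ acting on the Taub-flat solution. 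The universal cover of $\mathcal{X}^{\infty,\infty}$, as a Riemannian (resp.\ Lorentzian) groupoid with trivial unit space, is therefore the Taub-flat solution on $\R^3$.

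The key step is to lift the pointed groupoid convergence of Definition \ref{1.49} to pointed convergence of universal covers. For each compact time interval $S$ and each compact manifold-with-boundary $W \subset \mathcal{X}^{\infty,\infty}_{(0)}$, Proposition \ref{2.25} produces comparison diffeomorphisms $\phi_{S,W,j} : W \to W^{(j)} \subset X$ along with weak $W^{2,p}$- and $C^{1,\alpha}$-convergence of the pulled-back fields $(L,h,K)$ and the two-tensor data $(G,A)$ encoding the local $\R^2$-symmetry. Lifting $\phi_{S,W,j}$ to a map from a chosen fundamental domain in $\widetilde{W} \subset \R^3$ to $\widetilde X$, normalized so that the chosen basepoint in $\widetilde{W}$ is sent to $\widetilde{x}^\prime_j$, and extending equivariantly using the deck group actions, yields comparison maps $\widetilde{\phi}_{S,W,j}$ between the universal covers. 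The uniform curvature bound $|\Rm|_T \le C t^{-2}$ (which is preserved by pullback to $\widetilde X$ and by rescaling) together with the uniform lower bound on $L$ provided by Corollary \ref{2.4} yields uniform $W^{2,p}$- and $C^{1,\alpha}$-bounds on the lifted rescaled flows on balls of any fixed radius around $\widetilde{x}^\prime_j$. A diagonal argument analogous to that in the proof of Proposition \ref{1.52} then extracts, from any subsequence, a further subsequence with a pointed limit on a complete $\widetilde X^\infty$; since the orbit projection of this limit must coincide with $\mathcal{E}^{\infty,\infty}$ on an arbitrarily large neighborhood of the image of $\widetilde{x}^\prime_j$, and since locally the cover of $\mathcal{E}^{\infty,\infty}$ is Taub-flat on $\R^3$, the limit is a pointed Taub-flat Einstein flow on $\R^3$.

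The main obstacle, as in \cite[Section 6.2]{Lott (2010)}, lies in controlling the choice of basepoints $\widetilde{x}^\prime_j$: different choices may lie at unbounded distance from each other in $\widetilde X$, and different sequences of choices can in principle select different Taub-flat limits (differing by change of basepoint). This is resolved by exploiting the fact that the Taub-flat solution admits a transitive action of $\R^2 \rtimes \R$ on each spatial slice modulo the time direction, so that every pointed limit obtained from a sequence of basepoints is isometric, modulo a basepoint shift, to the standard Taub-flat flow on $\R^3$; hence the collection of such pointed Taub-flat flows forms the target set ${\mathcal S}$ with respect to which ``approaches'' is defined in Definition \ref{1.51}. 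Once this is recorded, the proof of the corollary amounts to applying the lifting procedure to an arbitrary subsequence and invoking the sequential characterization of approach.
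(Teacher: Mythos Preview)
Your proposal is correct and takes essentially the same approach as the paper: the paper's proof consists of the single sentence ``Given Proposition \ref{2.25}, the corollary follows as in \cite[Section 6.2]{Lott (2010)},'' and you have accurately identified and unpacked that reference, following the identical pattern used for Corollaries \ref{2.11}, \ref{2.13}, and \ref{2.48}. One minor point: the isometry group acting transitively on the Taub-flat spatial slices is the direct product $\R^2 \times \R$ (translations in the flat factor and along $H^1 \cong \R$), not a semidirect product, but this does not affect the argument.
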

\begin{proof}
Given Proposition \ref{2.25}, the corollary follows as in
\cite[Section 6.2]{Lott (2010)}.
\end{proof}

\subsection{Two dimensional orbit space} \label{subsect2.4}

In this subsection we deal with the case when a limiting Einstein
flow ${\mathcal E}^\infty$
is on an \'etale groupoid whose orbit space is two dimensional.
In our case, it will necessarily be a two dimensional orbifold. 
The goal is to show that after
performing a further rescaling, there is a new limit 
${\mathcal E}^{\infty,\infty}$
which is a  Bianchi-III flat flow;
hence an appropriate 
rescaling limit of the original Einstein flow ${\mathcal E}$
is a Bianchi-III flat flow.

If ${\mathcal E}$ has a rescaling limit whose orbit space has 
dimension zero or one
then we can consider the flow to be treated by
Corollary \ref{2.12} and Proposition \ref{2.25}.
Hence we assume that there is no such rescaling limit.
This implies that any rescaling limit ${\mathcal E}^{\infty,\infty}$ of 
${\mathcal E}^\infty$ also has
a two dimensional orbit space.

Using the type-III assumption on the original flow ${\mathcal E}$, we
argue that a rescaling limit ${\mathcal E}^{\infty,\infty}$ of
${\mathcal E}^{\infty}$ exists.  Then the issue is to show that it
is a Bianchi-III flat flow.  In order to do this, we make the
conformal change of Subsection \ref{subsectA.4} and assume that
the ensuing Lorentzian $3$-manifold has an expanding CMC foliation,
Using the monotonic quantity of Subsection \ref{subsectA.4}, 
we obtain an integral convergence result along the lines of
Proposition \ref{1.37}. To go further, we make the additional
assumption that there is a time function $\widehat{u}$ for the 
new CMC foliation, which is
comparable to $u$.
Using this time function and the monotonic quantity
from Subsection \ref{subsectA.4}, we deduce that 
${\mathcal E}^{\infty,\infty}$ is a Bianchi-III flat flow. Hence there are
arbitrarily large future time intervals on which the
original flow ${\mathcal E}$ is modelled, in a scale invariant way,
by a Bianchi-III flat flow.

To begin,
the Bianchi-III flat vacuum solution is the isometric product of $\R$ with
the Lorentzian cone over $H^2$. Suppose that ${\mathcal X}$ is a
three dimensional cross-product groupoid $(\R \times \R^2) \rtimes \Gamma$,
where $\Gamma$ is a group (with the discrete topology) that contains
$\R \times \Gamma_0$ as a finite-index subgroup, with
$\Gamma_0$ being a discrete subgroup of $\Isom(H^2)$. 
We say that an Einstein flow on
${\mathcal X}$ is of Bianchi-III flat type if the corresponding
Lorentzian groupoid is the cross-product of $\Gamma$ with
the Bianchi-III flat vacuum solution.

Let ${\mathcal E}$ be an Einstein flow as in the hypotheses of Corollary 
\ref{2.4}, satisfying Assumption \ref{2.7}.  
If $X$ has Thurston type $H^2 \times \R$ or $\widetilde{\SL(2, \R)}$,
 and $\{t_i\}_{i=1}^\infty$ is a sequence
with $\lim_{i \rightarrow \infty} t_i = \infty$, then a 
Gromov-Hausdorff limit of $\{ (X, t_i^{-2} h(t_i)) \}_{i=1}^\infty$
cannot be three dimensional by Proposition \ref{1.57}.
It cannot be zero dimensional (or else $X$
would have Thurston type
$\R^3$ or $Nil$) and it cannot be one dimensional 
(or else a finite cover of $X$ 
would be the total space of a $T^2$-bundle over a circle).  Thus if 
$X$ has Thurston type $H^2 \times \R$ or $\widetilde{\SL(2, \R)}$ then a 
Gromov-Hausdorff limit of $\{ (X, t_i^{-2} h(t_i)) \}_{i=1}^\infty$
must be two dimensional.
More generally, if
there is a sequence $\{t_i\}_{i=1}^\infty$ with $\lim_{i \rightarrow
\infty} t_i = \infty$ 
so that $\{ 
(X, t_i^{-2} h(t_i)) \}_{i=1}^\infty$
has a two dimensional Gromov-Hausdorff limit, 
then $X$ must have Thurston type
$H^2 \times \R$, $\widetilde{\SL(2, \R)}$, $\R^3$ or $Nil$.

With reference to Corollary \ref{2.4}, 
suppose that the groupoid ${\mathcal X}^\infty$ has
$\dim({\frak n}) = 1$. Then the orbit space is two dimensional.

If
there is a sequence $\{t_i\}_{i=1}^\infty$ with $\lim_{i \rightarrow
\infty} t_i = \infty$ so that $\{ 
(X, t_i^{-2} h(t_i)) \}_{i=1}^\infty$
has a Gromov-Hausdorff limit of dimension less than
two, then
we can consider the Einstein flow to be covered by 
Corollary \ref{2.12} and Proposition \ref{2.25}
(modulo the verification of Assumption \ref{2.23}).
Hence we make the following assumption.

\begin{assumption} \label{2.31}
There is no sequence $\{t_i\}_{i=1}^\infty$ with $\lim_{i \rightarrow
\infty} t_i = \infty$ so that $\{ 
(X, t_i^{-2} h(t_i)) \}_{i=1}^\infty$ has a Gromov-Hausdorff limit
of dimension less than two.
\end{assumption}

Let ${\mathcal E}^\infty$ be a limit flow on an \'etale groupoid
${\mathcal X}^\infty$, as in Corollary \ref{2.4}.
The orbit space $X^\infty$ of ${\mathcal X}^\infty$ 
is a two dimensional orbifold
\cite[Pf. of Proposition 3.5]{Lott (2010)}.
(This uses our assumption that $X$ is aspherical.)
From Assumption \ref{2.31}, there is no subsequence $\{u_j\}_{j=1}^\infty$ with
$\lim_{j \rightarrow \infty} u_j = \infty$ so that
$\{ 
(X^\infty, u_j^{-2} h^\infty(u_j)) \}_{j=1}^\infty$
has a Gromov-Hausdorff limit of dimension less than two.

The coordinate function $u$ on $(0, \infty) \times 
{\mathcal X}^\infty$ pulls back
from a function on $(0, \infty) \times X^\infty$, which we again denote by $u$.
Similarly, the lapse function $L$ pulls back from a function on
$(0, \infty) \times X^\infty$, which we again denote by $L$. 
As in Subsection \ref{A.1},
the Lorentzian 
metric corresponding to the groupoid Einstein flow
${\mathcal E}^\infty$ can locally be written as
\begin{equation} \label{2.32}
-L^2 du^2 + \sum_{\alpha, \beta =1}^2 h_{\alpha \beta} 
db^\alpha db^\beta  + G (d\theta+A)^2.
\end{equation}
Put 
\begin{equation} \label{2.33}
g = -L^2 du^2 + \sum_{\alpha, \beta =1}^2 h_{\alpha \beta} 
db^\alpha db^\beta,
\end{equation}
$\widehat{g} = Gg$ and 
$\widehat{G} = G^2$.
Then
\begin{equation} \label{2.34}
g + G (d\theta + A)^2 = \widehat{G}^{- \: \frac12} 
\left( \widehat{g} + \widehat{G} (d\theta + A)^2 \right).
\end{equation}

\begin{assumption} \label{2.35}
There is an open set $U \subset (0, \infty) \times X^\infty$ containing
$[u_0, \infty) \times X^\infty$ for some $u_0 < \infty$, and a proper function 
$\widehat{u} \in W^{3,p}_{loc}(U)$ so that
\begin{enumerate}
\item $\nabla \widehat{u}$ is timelike, and
\item On $U$, the level sets of $\widehat{u}$ have constant mean
curvature with respect to $\widehat{g}$.
\end{enumerate}
\end{assumption}

Assumption \ref{2.35} implies the level sets of $\widehat{u}$ are compact.
Let us denote their diffeomorphism type by
$\widehat{X}^\infty$. Then for suitable $\widehat{u}_0 < \infty$, the space
$\widehat{u}^{-1}([\widehat{u}_0,
\infty))$ is $W^{3,p}_{loc}$-diffeomorphic to 
$[\widehat{u}_0, \infty) \times 
\widehat{X}^\infty$.

Letting $\widehat{H}$ denote the (constant) mean curvatures of the level
sets,
suppose that $\widehat{H}$ is an increasing function in $\widehat{u}$ that 
takes all values in an interval $(-\widehat{H}_0, 0)$. Define a new
time parameter by $v = - \: \frac{2}{\widehat{H}}$. From (\ref{A.50}),
$v^{-2} \dvol(\widehat{X}^\infty, \widehat{h}(v))$ is pointwise decreasing.
Put 
\begin{equation} \label{2.36}
\widehat{\dvol}_\infty = \lim_{v \rightarrow \infty}
v^{-2} \dvol(\widehat{X}^\infty, \widehat{h}),
\end{equation}
an absolutely continuous measure on $\widehat{X}^\infty$. From (\ref{A.51}),
\begin{align} \label{2.37}
& \frac{d}{dv} \left( v^{-2} \vol(\widehat{X}^\infty, \widehat{h}(v)) 
\right) = 
- \: v \int_{\widehat{X}^\infty} 
\left[
\widehat{L} |\widehat{K}^{0}|^2 + 
\frac14 \widehat{L}^{-1} \left| \widehat{S}_0 \right|^2 + 
\right. \\
& \left. \frac14 \widehat{L}^{-1} \left( 
\frac{\partial \ln \det \widehat{G}}{\partial v} \right)^2 +
\frac14 \widehat{L}
\widehat{h}^{ij} \widehat{h}^{kl} 
\widehat{G}_{IJ} \widehat{F}^{I}_{ik} 
\widehat{F}^{J}_{jl}
\right]\: \dvol(\widehat{X}^\infty, \widehat{h}(v)). \notag
\end{align}

Given $s > 0$, define $\widehat{L}_s$, $\widehat{h}_s$, 
$\widehat{K}_s$ and $\widehat{K}_s^0$ as in (\ref{1.36}). Put
$\widehat{G}_s(v) = \widehat{G}(sv)$ and
$\widehat{F}_{s,ij}(v) = s^{-1} \widehat{F}_{ij}(sv)$.

\begin{proposition} \label{2.38}
Given $\Lambda > 1$, we have
\begin{align} \label{2.39}
& \lim_{s \rightarrow \infty} (\widehat{L}_s-1) =
\lim_{s \rightarrow \infty} |\widehat{K}^0|^2_s \widehat{L}_s =
\lim_{s \rightarrow \infty} |\widehat{S}^0|^2_s \widehat{L}_s =
\lim_{s \rightarrow \infty} 
\left( \frac{\partial \ln \det \widehat{G}_s}{\partial v} \right)^2
\widehat{L}_s^{-1} = \\
& \lim_{s \rightarrow \infty} \widehat{h}_s^{ij} \widehat{h}_s^{kl} 
\widehat{G}_{s,IJ} \widehat{F}^{I}_{s,ik} 
\widehat{F}^{J}_{s,jl} \widehat{L}_s = 0 \notag
\end{align}
in $L^1 \left( [\Lambda^{-1}, \Lambda] \times \widehat{X}_\infty,
dv \: \widehat{\dvol}_\infty \right)$.
\end{proposition}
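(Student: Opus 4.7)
The proposition is a two-dimensional, conformally-modified analogue of Proposition \ref{1.37}, so the plan is to mimic that argument using the monotonic quantity in (\ref{2.37}) in place of (\ref{1.17}), together with a lapse equation for the CMC foliation of $\widehat{g}$ that plays the role of (\ref{1.28}).

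First I would integrate (\ref{2.37}) from $v_0$ to $\infty$. Since $v^{-2}\vol(\widehat{X}^\infty,\widehat{h}(v))$ is monotonically nonincreasing and bounded below by zero, the telescoping gives a finite total variation, and because every term inside the brackets on the right-hand side is manifestly nonnegative, each contributes a finite integral separately. Converting to the scale-invariant measure $\frac{dv}{v}$ by absorbing the leading $v$ factor and powers of $v$ appropriate to each term, one obtains
\begin{equation*}
\int_{v_0}^{\infty} \int_{\widehat{X}^\infty} \Bigl[ v^2\widehat{L}|\widehat{K}^0|^2 + v^2\widehat{L}^{-1}|\widehat{S}_0|^2 + \bigl(v\partial_v\ln\det\widehat{G}\bigr)^2\widehat{L}^{-1} + v^2\widehat{L}\, \widehat{h}^{ij}\widehat{h}^{kl}\widehat{G}_{IJ}\widehat{F}^{I}_{ik}\widehat{F}^{J}_{jl} \Bigr]\, \frac{\dvol(\widehat{X}^\infty,\widehat{h}(v))}{v^2}\, \frac{dv}{v} < \infty.
\end{equation*}
Pointwise monotonicity $\widehat{\dvol}_\infty \le v^{-2}\dvol(\widehat{X}^\infty,\widehat{h}(v))$ then lets me replace the volume form by $\widehat{\dvol}_\infty$ and still have a finite integral.

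Next I would handle the $\widehat{L}-1$ statement. The CMC lapse equation for the conformally modified flow (derived as in Subsection \ref{subsectA.4}, analogous to (\ref{1.13})) should read
\begin{equation*}
\tfrac{\partial \widehat{H}}{\partial v} = -\widehat{\triangle}\widehat{L} + \widehat{L}\bigl(|\widehat{K}^0|^2 + \tfrac14\widehat{L}^{-2}|\widehat{S}_0|^2 + \tfrac14\widehat{L}^{-2}(\partial_v\ln\det\widehat{G})^2 + \tfrac14\widehat{h}^{ij}\widehat{h}^{kl}\widehat{G}_{IJ}\widehat{F}^I_{ik}\widehat{F}^J_{jl} \bigr) + \tfrac{1}{2}\widehat{L}\widehat{H}^2,
\end{equation*}
which with $\widehat{H}=-2/v$ rearranges into
\begin{equation*}
\tfrac{2}{v^2}(1-\widehat{L}) = -\widehat{\triangle}\widehat{L} + \widehat{L}\cdot(\text{the bracketed nonnegative quantity}).
\end{equation*}
The strong maximum principle forces $\widehat{L}\le 1$, and integration against $\widehat{\dvol}_\infty$ produces an $L^1$-bound on $1-\widehat{L}$ of the same form as the bracketed terms, which is already known to be integrable by the preceding step.

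Finally, I would run the contradiction/rescaling argument of Proposition \ref{1.37}. Suppose one of the five quantities, call it $Q_s$, fails to converge to $0$ in $L^1([\Lambda^{-1},\Lambda]\times\widehat{X}^\infty, dv\,\widehat{\dvol}_\infty)$. Then there exist $\epsilon>0$ and $s_i\to\infty$ with $\int_{\Lambda^{-1}}^{\Lambda}\int_{\widehat{X}^\infty}Q_{s_i}\,\widehat{\dvol}_\infty\,dv \ge \epsilon$. Passing to a subsequence so the windows $[s_i\Lambda^{-1},s_i\Lambda]$ are disjoint and undoing the rescaling exactly as in (\ref{1.40}) yields
\begin{equation*}
\sum_i \int_{s_i\Lambda^{-1}}^{s_i\Lambda} \int_{\widehat{X}^\infty} (\text{scale-invariant form of }Q)\, \widehat{\dvol}_\infty\, \frac{dv}{v} \ge \sum_i \Lambda^{-1}\epsilon = \infty,
\end{equation*}
contradicting the finite integral established in the first step. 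The main obstacle is purely bookkeeping: verifying that the correct powers of $v$ convert each of the five terms into its scale-invariant form, and confirming the precise lapse equation for $\widehat{g}$ from the appendix — once those are in hand, the rescaling/disjointness argument is formally identical to the one in Proposition \ref{1.37}.
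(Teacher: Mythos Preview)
Your proposal is correct and is exactly the approach the paper intends: the paper's own proof simply reads ``The proof is similar to that of Proposition \ref{1.37}. We omit the details.'' You have supplied those details faithfully, using the monotonicity formula (\ref{2.37}) (derived from (\ref{A.51}) with $n=2$, $N=1$) in place of (\ref{1.17}), the lapse equation from (\ref{A.47}) in place of (\ref{1.28}), and the identical disjoint-windows contradiction argument.
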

\begin{proof}
The proof is similar to that of Proposition \ref{1.37}.  We omit the details.
\end{proof}

\begin{remark} 
From Subsection \ref{subsectA.4}, if $\widehat{L}-1=\widehat{K}^0=
\widehat{S}^0=
\frac{\partial \ln \det \widehat{G}}{\partial v} = 
\widehat{h}^{ij} \widehat{h}^{kl} \widehat{G}_{IJ} 
\widehat{F}^I_{ik} \widehat{F}^J_{jl} = 0$
then $\widehat{G}$ is locally constant and 
$\widehat{g}$ is flat.
Hence Proposition \ref{2.38} can be interpreted as saying that
in an integral sense, the original flow ${\mathcal E}$ is approaching
a flow of Bianchi-III flat type.
\end{remark}

Define $\widehat{H}^1_{s,\Lambda}$ as in the paragraph before
Proposition \ref{1.42}, replacing $h_s$ by $\widehat{h}_s$.

\begin{proposition} \label{2.40}
We have
\begin{equation} \label{2.41}
\lim_{s \rightarrow \infty} d_{symm}(\widehat{H}^1_{s,\Lambda}, I_n) = 0
\end{equation}
in $L^2(\widehat{X}_\infty, \widehat{\dvol_\infty})$.
\end{proposition}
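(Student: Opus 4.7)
The plan is to mimic the proof of Proposition \ref{1.42} verbatim in the conformally modified setting. The starting point will be the evolution equation for $\widehat{h}$ along the conformally modified CMC foliation, which is the analog of (\ref{1.4}):
\begin{equation*}
\frac{\partial \widehat{h}_{ij}}{\partial v} = - 2 \widehat{L} \widehat{K}_{ij}.
\end{equation*}
Exactly as in (\ref{1.44}), this identifies the length of the curve $\{\widehat{H}^1_{s,v}(x)\}_{v=1}^{\Lambda}$ in $\SO(n)\backslash \SL(n)$ with $2\int_1^{\Lambda} |\widehat{K}^0_s|(x,v) \widehat{L}_s(x,v)\, dv$, since passing to the unit-determinant quotient kills the trace part of $\widehat{h}_s^{-1/2} \partial_v \widehat{h}_s \widehat{h}_s^{-1/2}$ and leaves only $\widehat{K}^0_s$.

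Next I would apply the Cauchy--Schwarz inequality as in (\ref{1.45}):
\begin{equation*}
d_{symm}^2(\widehat{H}^1_{s,\Lambda}(x), I_n) \;\le\; 4 \int_1^{\Lambda} |\widehat{K}^0_s|^2(x,v) \widehat{L}_s(x,v)\, dv \; \int_1^{\Lambda} \widehat{L}_s(x,v)\, dv.
\end{equation*}
To control the second factor I invoke the pointwise upper bound $\widehat{L} \le 1$ coming from the maximum principle applied to the elliptic equation for $\widehat{L}$ derived from the CMC condition on $\widehat{g}$ (Subsection \ref{subsectA.4}), which is the conformally modified analog of (\ref{1.28}). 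Should the sharp constant $1$ not be available in the modified setting, any $s$-independent upper bound on $\widehat{L}$ would suffice, since we only need boundedness.

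Integrating against $\widehat{\dvol}_\infty$ then gives
\begin{equation*}
\int_{\widehat{X}^\infty} d_{symm}^2(\widehat{H}^1_{s,\Lambda}(x), I_n)\, \widehat{\dvol}_\infty \;\le\; 4(\Lambda-1) \int_1^{\Lambda} \int_{\widehat{X}^\infty} |\widehat{K}^0_s|^2 \widehat{L}_s\, \widehat{\dvol}_\infty\, dv,
\end{equation*}
and the conclusion follows directly from the $L^1$-convergence $\lim_{s \to \infty} |\widehat{K}^0|_s^2 \widehat{L}_s = 0$ in $L^1([\Lambda^{-1},\Lambda] \times \widehat{X}^\infty, dv\,\widehat{\dvol}_\infty)$ supplied by Proposition \ref{2.38}.

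The main (though modest) obstacle is bookkeeping: confirming that the pointwise upper bound on $\widehat{L}$ really does follow from the elliptic equation for the conformally modified lapse in Subsection \ref{subsectA.4}, and confirming that the decomposition $\widehat{h}_s^{-1/2} \partial_v \widehat{h}_s \widehat{h}_s^{-1/2} = -2\widehat{L}_s (\widehat{K}_s)_{\text{op}}$ yields the traceless part $-2\widehat{L}_s \widehat{K}^0_s$ after projecting onto $T_{I_n}(\SO(n)\backslash \SL(n))$ under whatever normalization convention is in force for $\widehat{K}$. Once these identifications are in place, the argument is identical in structure to Proposition \ref{1.42}, with Proposition \ref{2.38} replacing Proposition \ref{1.37}.
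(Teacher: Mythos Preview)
Your proposal is correct and follows exactly the approach the paper indicates: the paper's proof simply reads ``The proof is similar to that of Proposition \ref{1.42}. We omit the details.'' Your bookkeeping checks are also in order --- in particular, with the time parameter $v = -\tfrac{2}{\widehat{H}}$ and $n=2$ in the reduced base, the maximum-principle bound (\ref{A.48}) from Subsection \ref{subsectA.4} gives precisely $\widehat{L} \le 1$.
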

\begin{proof}
The proof is similar to that of Proposition \ref{1.42}.  We omit the details.
\end{proof}

We now make a further assumption about $\widehat{u}$, saying that it is
comparable to $u$.

\begin{assumption} \label{2.42}
In addition to Assumption \ref{2.35}, there is some
$\Lambda < \infty$ so that
\begin{enumerate}
\item
$\Lambda^{-1} u \le \widehat{u} \le \Lambda u$,
\item For all $r > u_0$, $p < \infty$ and $k + l \le 3$,
\begin{equation} \label{2.43}
\parallel \nabla_x^k \partial_u^l \widehat{u} \parallel_{
L^p((r, 2r) \times X^\infty)
} \le \const r^{1-k-l+\frac{3}{p}},
\end{equation} 
and
\item
$\frac{{g}(\nabla u, \nabla \widehat{u})}
{|\nabla u|_{{g}}  | \nabla \widehat{u}|_{{g}}} 
\le - \Lambda^{-1}$.
\end{enumerate}
\end{assumption}

\begin{remark}
The exponent on the right-hand side of (\ref{2.43}) ensures scale invariance.
\end{remark}

\begin{proposition} \label{2.44}
If Assumption \ref{2.42} holds then there is a sequence
$\{t_j^\prime \}_{j=1}^\infty$ with $\lim_{j \rightarrow \infty} t^\prime_j =
\infty$, and an Einstein flow ${\mathcal E}^{\infty, \infty}$ of
Bianchi-III flat type,
so that the rescalings ${\mathcal E}_{t^\prime_j}$ of ${\mathcal E}$ satisfy
$\lim_{j \rightarrow \infty}
{\mathcal E}_{t^\prime_j} = {\mathcal E}^{\infty, \infty}$.
\end{proposition}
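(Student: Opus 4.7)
The plan is to mirror the proof of Proposition \ref{2.25}, substituting the one-dimensional monotonic quantity of Subsection \ref{subsectA.3} by the normalized volume functional (\ref{2.37}) of the conformally modified Lorentzian metric $\widehat{g}$ of Subsection \ref{subsectA.4}. Given an arbitrary sequence $\{s_j\}_{j=1}^\infty$ with $s_j \to \infty$, I first perform a diagonal extraction: for fixed $j$ we have $\lim_{i \to \infty} \mathcal{E}_{s_j t_i} = \mathcal{E}^\infty_{s_j}$, and the definition of convergence of flows lets me choose a subsequence $\{t_{i_j}\}_{j=1}^\infty$ such that $\lim_{j \to \infty} \mathcal{E}_{s_j t_{i_j}} = \lim_{j \to \infty} \mathcal{E}^\infty_{s_j} = \mathcal{E}^{\infty,\infty}$ for some Einstein flow on a pointed \'etale groupoid $\mathcal{X}^{\infty,\infty}$. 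By Assumption \ref{2.31} the orbit space of $\mathcal{X}^{\infty,\infty}$ is again two dimensional. I then set $t'_j = s_j t_{i_j}$ and aim to show $\mathcal{E}^{\infty,\infty}$ is of Bianchi-III flat type.

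Next I would construct a limiting CMC time function $\widehat{u}^\infty$ on $(0,\infty)\times X^{\infty,\infty}$. Let $\widehat{u}_{s_j}$ denote $s_j^{-1}$ times the pullback of $\widehat{u}$ under the comparison diffeomorphism used to form $\mathcal{E}^\infty_{s_j}$. Assumption \ref{2.42}(1)--(2) provides uniform, scale-invariant $W^{3,p}_{loc}$ bounds, so after a further subsequence $\widehat{u}_{s_j} \to \widehat{u}^\infty$ weakly in $W^{3,p}_{loc}$. Part (3) of the assumption, combined with the $C^{1,\alpha}$-convergence of the spacetime metric, ensures $\nabla \widehat{u}^\infty$ remains timelike; the CMC condition for the conformally modified metric $\widehat{g}^{\infty,\infty}$ also passes to the limit, since it is an equation on the mean curvature of the $\widehat{u}$-level sets that depends continuously on the limit data.

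The core of the argument is the monotonic quantity in (\ref{2.37}): along $\mathcal{E}^\infty$ the normalized volume $v^{-2}\vol(\widehat{X}^\infty,\widehat{h}(v))$ is nonincreasing, nonnegative, and scale invariant (since $\dim \widehat{X}^\infty = 2$), and is unaffected by the flat-bundle automorphisms used in the groupoid convergence. Hence the corresponding quantity for $\mathcal{E}^{\infty,\infty}$ must be constant in the new time variable, and (\ref{2.37}) forces the pointwise vanishing of $|\widehat{K}^0|^2$, $|\widehat{S}^0|^2$, $\partial_v \ln \det \widehat{G}$, and of the curvature density $\widehat{h}^{ij}\widehat{h}^{kl}\widehat{G}_{IJ}\widehat{F}^I_{ik}\widehat{F}^J_{jl}$, while a strong maximum principle argument applied to the conformally-modified lapse equation, as in the proofs of Propositions \ref{1.15} and \ref{2.5}, gives $\widehat{L}^{\infty,\infty}\equiv 1$. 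By Subsection \ref{subsectA.4}, $\widehat{G}$ is then locally constant and $\widehat{g}^{\infty,\infty}$ is a Lorentzian cone over a Riemannian surface of constant curvature $-1$ (which in $2+1$ vacuum reduces to the $2+1$-dimensional Milne spacetime). Undoing the conformal change (\ref{2.34}) with constant $\widehat{G}$ and $\widehat{F}=0$ exhibits $\mathcal{E}^{\infty,\infty}$ as the product of $\R$ with the Lorentzian cone over a hyperbolic surface, i.e.\ a flow of Bianchi-III flat type.

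The main obstacle I anticipate is passing the vanishing of the integrand in (\ref{2.37}) from an integrated statement about the monotonic quantity to the pointwise geometric conclusion needed to identify $\widehat{g}^{\infty,\infty}$; this demands combining elliptic regularity for the constraint and lapse equations with the strong maximum principle, exactly as in Proposition \ref{2.5}, and requires verifying that the $j$-dependent bundle automorphisms used in forming $\mathcal{E}^{\infty,\infty}$ genuinely leave $v^{-2}\vol(\widehat{X}^\infty,\widehat{h}(v))$ invariant. A secondary technical point is that the conformally modified CMC foliation must persist in the weak $W^{2,p}$ limit; this is where the comparability built into Assumption \ref{2.42} is essential, since without it one could not guarantee that $\widehat{u}^\infty$ has compact level sets with the required regularity.
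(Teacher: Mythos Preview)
Your proposal is correct and follows essentially the same route as the paper's proof: the diagonal extraction to produce ${\mathcal E}^{\infty,\infty}$, the passage of $\widehat{u}$ to a limiting CMC time function via Assumption \ref{2.42}, and the use of scale invariance plus bundle-automorphism invariance of $(-\widehat{H})^2\vol(\widehat{X}^\infty,\widehat{h})$ to force constancy and hence rigidity via Subsection \ref{subsectA.4} are all exactly what the paper does. The only minor deviation is that you invoke a strong maximum principle for $\widehat{L}^{\infty,\infty}\equiv 1$, whereas the paper simply cites Subsubsection \ref{subsubsectA.3.3}, where constancy of the monotonic quantity already forces the vanishing in (\ref{A.61}) and the lapse formula (\ref{A.62}) directly, without a separate maximum-principle step.
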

\begin{proof}
Let $\{s_j\}_{j=1}^\infty$ be a sequence with $\lim_{j \rightarrow \infty} 
s_j = \infty$. Since $\lim_{i \rightarrow \infty} {\mathcal E}_{t_i} =
{\mathcal E}^\infty$, for fixed $j$, we have
$\lim_{i \rightarrow \infty} {\mathcal E}_{s_j t_{i}} =
{\mathcal E}^\infty_{s_j}$.
If $\{ t_{i_j} \}_{j=1}^\infty$ is a subsequence of $\{ t_{i} \}_{i=1}^\infty$
then after passing to a subsequence of $j$'s, we can assume that
$\lim_{j \rightarrow \infty} {\mathcal E}_{s_j t_{i_j}} =
{\mathcal E}^{\infty, \infty}$ for an Einstein flow 
${\mathcal E}^{\infty, \infty}$ on an \'etale groupoid
$X^{\infty, \infty}$, defined on 
the time interval $(0, \infty)$.  
From our definition of convergence of flows,
we can choose $\{ t_{i_j} \}_{j=1}^\infty$ so that
$\lim_{j \rightarrow \infty} {\mathcal E}^\infty_{s_j} =
\lim_{j \rightarrow \infty} {\mathcal E}_{s_j t_{i_j}} =
{\mathcal E}^{\infty, \infty}$. (The rescaling in
${\mathcal E}^\infty_{s_j}$ involves pullback with respect to 
$u \rightarrow s_j u$ and a $j$-dependent diffeomorphism $\phi_j$ of
$X^\infty$, along with a $j$-dependent automorphism of the flat 
$\R$-vector bundle on $X^\infty$.)
From Assumption \ref{2.31},
the orbit space of $X^{\infty, \infty}$ is two dimensional.
The Lorentzian metric corresponding to the groupoid Einstein flow
${\mathcal E}^{\infty, \infty}$ can be locally written as
\begin{equation} \label{2.45}
-(L^\infty)^2 (du^\infty)^2 + \sum_{\alpha, \beta = 1}^2
h^\infty_{\alpha \beta} db^\infty_\alpha db^\infty_\beta + 
G^\infty (d\theta +A^{\infty})^2.
\end{equation}
We will show this is an Einstein flow of Bianchi-III flat type and take
$t^\prime_j = s_j t_{i_j}$. 

Put 
\begin{equation} \label{2.46} 
g^\infty = 
-(L^\infty)^2 (du^\infty)^2 + \sum_{\alpha, \beta = 1}^2
h^\infty_{\alpha \beta} db^\infty_\alpha db^\infty_\beta,
\end{equation}
$\widehat{g}^\infty = G^\infty g^\infty$ and 
$\widehat{G}^\infty = (G^\infty)^2$.
Then
\begin{equation} \label{2.47}
g^\infty + G^\infty (d\theta + A)^2 = (\widehat{G}^\infty)^{- \: \frac12} 
\left( \widehat{g}^\infty + \widehat{G}^\infty (d\theta + 
A^\infty)^2 \right).
\end{equation}

Let $\widehat{u}_{s_j}$ be $\frac{1}{s_j}$ times
the pullback of $\widehat{u}$ with respect to
$u  \rightarrow s_j u$ and $\phi_j \in \Diff(X^\infty)$.
From Assumption \ref{2.42}(1,2), after passing
to a subsequence we can assume that $\lim_{j \rightarrow \infty}
\widehat{u}_{s_j} = \widehat{u}^\infty$ in the weak topology on
$W^{3,p}_{loc}$, for some
$\widehat{u}^\infty \in W^{3,p}_{loc}((0, \infty) \times 
X^{\infty, \infty})$.
From Assumption \ref{2.35} and Assumption \ref{2.42}(3), the gradient
$\nabla \widehat{u}^\infty$ is timelike, and the level sets of
$\widehat{u}^\infty$ have constant mean curvature with respect to
$\widehat{g}^\infty$.

We will apply the monotonicity result of Subsection \ref{A.4}, with
$n = 2$ and $N=1$, to ${\mathcal E}^\infty$,
replacing the $g$ and $G$ of Subsection \ref{A.4} by
$\widehat{g}$ and $\widehat{G}$.
Let $\widehat{h}$ denote the induced metric on the level sets
$\widehat{X}^\infty$ of $\widehat{u}$. Let $\widehat{H}$ denote
the (constant)
mean curvatures of the level sets.
From Subsection \ref{A.4}, we know that
$(- \widehat{H})^2 \vol(\widehat{X}^{\infty},
\widehat{h}(\widehat{u}))$ is nonincreasing in $\widehat{u}$.
It is clearly nonnegative. Since 
$\dim(\widehat{X}^\infty) = 2$,
the expression is invariant under rescaling.
Note that when forming the limit
$\lim_{j \rightarrow \infty} {\mathcal E}^\infty_{s_j} =
{\mathcal E}^{\infty, \infty}$, we are allowed to perform 
$j$-dependent automorphisms of
the flat $1$-dimensional vector bundle on $X^\infty$.
These automorphisms can change
$G$ by a $j$-dependent multiplicative constant, and hence change
$\widehat{g}$ by a multiplicative constant.  One sees that on
a given level set, this does not change 
$(- \widehat{H})^2 \vol(\widehat{X}^{\infty},
\widehat{h})$.

Given $a \in (0, \infty)$, the level set $(\widehat{u}^\infty)^{-1}(a)
\subset (0, \infty) \times X^{\infty, \infty}$ is
the limit of rescalings of level sets $\widehat{u}^{-1}(s_j a)
\subset (0, \infty) \times X^{\infty}$.
It follows that the monotonic quantity
$(- \widehat{H}^\infty)^2 \vol(\widehat{X}^{\infty,\infty},
\widehat{h}^\infty)$ is constant in 
$\widehat{u}^\infty$. By Subsection \ref{A.4}, we conclude that
${\mathcal E}^{\infty, \infty}$ is an Einstein flow of
Bianchi-III flat type.
\end{proof}

Let $\widetilde{X}$ denote the universal cover of $X$. We give it the
pullback Einstein flow.

\begin{corollary} \label{2.48}
Under the hypotheses of Proposition \ref{2.44},
choose $\widetilde{x}^\prime_j \in \widetilde{X}$.
Then $\{
( \widetilde{\mathcal E}_{t^\prime_j}, 
\widetilde{x}^\prime_j) \}_{j=1}^\infty$ 
approaches the
set of Bianchi-III flat Einstein flows
on $\R^3$, in the pointed weak $W^{2,p}$-topology
and the pointed $C^{1,\alpha}$-topology.
\end{corollary}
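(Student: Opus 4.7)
The plan is to pass the groupoid convergence of Proposition \ref{2.44} up to the universal cover, following the strategy used for the analogous Corollaries \ref{2.13} and \ref{2.29} and treated systematically in \cite[Section 6.2]{Lott (2010)}.

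By Proposition \ref{2.44}, after passing to a subsequence we have ${\mathcal E}_{t^\prime_j} \to {\mathcal E}^{\infty,\infty}$ in the weak $W^{2,p}$- and $C^{1,\alpha}$-topologies, where ${\mathcal E}^{\infty,\infty}$ is an Einstein flow of Bianchi-III flat type on an \'etale groupoid ${\mathcal X}^{\infty,\infty} = (\R \times \R^2) \rtimes \Gamma$, with $\Gamma$ containing $\R \times \Gamma_0$ as a finite-index subgroup and $\Gamma_0 \subset \Isom(H^2)$ discrete. The associated Lorentzian groupoid has universal cover equal to the honest Bianchi-III flat spacetime on $\R^3$, namely the isometric product of $\R$ with the Lorentzian cone over $H^2$.

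To lift the convergence, I would unpack the definition of pointed weak $W^{2,p}$-convergence of Einstein flows on \'etale groupoids from \cite[Definition 5.8]{Lott (2007)}, as adapted in Subsection \ref{subsect2.1}. This convergence is witnessed by local comparison diffeomorphisms from balls in $(X, (t^\prime_j)^{-2} h(t^\prime_j))$ into slices of charts of ${\mathcal X}^{\infty,\infty}$, constructed via frame bundles / exponential maps. Such slice charts lift canonically to the universal cover $\R^3$ of the limit groupoid: starting from $\widetilde{x}^\prime_j \in \widetilde{X}$ lying over a point $x^\prime_j \in X$ and a fixed basepoint in $\R^3$, one lifts the local comparison diffeomorphisms to pointed diffeomorphisms defined on $\widetilde{X}$-balls of radius tending to infinity (since the chart neighborhoods in ${\mathcal X}^{\infty,\infty}$ can be taken to exhaust its universal cover). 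The $W^{2,p}$ and $C^{1,\alpha}$ bounds pull back, producing pointed weak $W^{2,p}$- and $C^{1,\alpha}$-convergence of $(\widetilde{\mathcal E}_{t^\prime_j}, \widetilde{x}^\prime_j)$ to a Bianchi-III flat Einstein flow on $\R^3$ with some basepoint.

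The main subtlety, which is why the conclusion is stated as convergence to a \emph{set} rather than to a single flow, is the dependence on the basepoint choice: different choices of $\widetilde{x}^\prime_j$ over the same $x^\prime_j$ are related by deck transformations and produce isometric limits, but there is no canonical identification of $\pi_1(X)$-orbits in $\widetilde{X}$ with points on the universal cover of ${\mathcal X}^{\infty,\infty}$. Consequently the limit is only determined up to the (transitive) action of the isometry group of the Bianchi-III flat spacetime, and the corresponding ambiguity is absorbed by taking the right-hand side to be the full set of Bianchi-III flat Einstein flows on $\R^3$.
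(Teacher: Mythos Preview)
Your proposal is correct and follows exactly the paper's approach: the paper's entire proof is the single sentence ``Given Proposition \ref{2.44}, the corollary follows as in \cite[Section 6.2]{Lott (2010)},'' which is precisely the strategy you outline (and which you explicitly cite). Your additional paragraphs simply unpack what that reference entails.
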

\begin{proof}
Given Proposition \ref{2.44}, the corollary follows as in
\cite[Section 6.2]{Lott (2010)}.
\end{proof}

\begin{corollary} \label{2.49}
Under the hypotheses of Proposition \ref{2.44},
$X$ has Thurston type $H^2 \times \R$ or
$\widetilde{\SL(2, \R)}$.
\end{corollary}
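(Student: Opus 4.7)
The plan is to combine Proposition \ref{2.44} with the classification of Thurston types compatible with a two-dimensional limiting orbit space, and then invoke Cheeger--Fukaya--Gromov collapsing theory to eliminate the remaining almost-nilpotent possibilities. By Proposition \ref{2.44}, there is a sequence $\{t_j'\}$ and a limit flow ${\mathcal E}^{\infty,\infty}$ of Bianchi-III flat type on an \'etale groupoid. From the description preceding Proposition \ref{2.44}, the Bianchi-III flat groupoid has the form $(\R \times \R^2) \rtimes \Gamma$, and the spatial orbit space of any of its time slices is (up to rescaling) a compact hyperbolic $2$-orbifold $\Sigma = H^2/\Gamma_0$, possibly further quotiented by a finite group; compactness here comes from Assumption \ref{2.7}, which descends to give a diameter bound on the orbit space. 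Since the convergence in Proposition \ref{2.44} is in the pointed $C^{1,\alpha}$-topology of \'etale groupoids, the rescaled spaces $(X, (t_j')^{-2} h(t_j'))$ converge to this compact hyperbolic $2$-orbifold in pointed Gromov--Hausdorff topology.

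Now I would recall the enumeration in the introduction of Section \ref{sect2}: the existence of a rescaling limit with two-dimensional orbit space forces the Thurston type of $X$ to be one of $H^2 \times \R$, $\widetilde{\SL(2, \R)}$, $\R^3$, or $Nil$. To prove the corollary it therefore suffices to rule out the flat and nilpotent geometries $\R^3$ and $Nil$.

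For that, I would appeal to Cheeger--Fukaya--Gromov collapsing theory applied to the sequence $(X, (t_j')^{-2} h(t_j'))$. The type-III hypothesis, paired with the fact that rescaling by $(t_j')^{-2}$ trivializes the $t^{-2}$ factor in $|\Rm|_T \le C t^{-2}$, produces a uniform sectional curvature bound. Combined with Gromov--Hausdorff convergence to the hyperbolic $2$-orbifold $\Sigma$, the collapsing theory yields, for large $j$, an F-structure on $X$ whose orbit space is $\Sigma$ and whose orbits are essential circles (the one-dimensional local symmetry direction of the groupoid). In particular, up to finite covers $X$ is a Seifert fibration over a compact hyperbolic $2$-orbifold. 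Consequently $\pi_1(X)$ sits in an extension of a finite-index subgroup of $\pi_1^{orb}(\Sigma)$ by a virtually cyclic normal subgroup. Since $\pi_1^{orb}(\Sigma)$ contains nonabelian free subgroups, $\pi_1(X)$ cannot be virtually nilpotent; this excludes Thurston types $\R^3$ and $Nil$. By the Thurston classification, a Seifert fibered aspherical $3$-manifold over a hyperbolic base orbifold carries either the $H^2 \times \R$ geometry (zero Euler number) or the $\widetilde{\SL(2, \R)}$ geometry (nonzero Euler number), yielding the corollary.

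The main obstacle is the passage from the \'etale-groupoid limit produced abstractly in Proposition \ref{2.44} to a concrete topological fibration conclusion for $X$: one must verify that the limiting orbit space is genuinely a compact hyperbolic $2$-orbifold (not some further degeneration), and that the sequence $t_j'$ actually realizes a Cheeger--Fukaya--Gromov collapse up to finite covers, so that the resulting F-structure has the expected Seifert form. Once those technical points are in place, the fundamental-group obstruction and the Thurston classification close the argument.
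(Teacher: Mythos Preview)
Your proposal is correct and follows essentially the same route as the paper: establish that $(X,(t_j')^{-2}h(t_j'))$ Gromov--Hausdorff converges with bounded curvature to a compact hyperbolic $2$-orbifold, and then invoke standard $3$-manifold topology to conclude. The paper's proof is a single sentence asserting exactly this, leaving the Seifert-fibration/Thurston-classification step implicit; you have simply unpacked that step in detail via the $\pi_1$ obstruction, which is a perfectly good way to fill it in.
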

\begin{proof}
The sequence $\{ (t_j^\prime)^{-2} h(t_j^\prime) \}_{j=1}^\infty$
of Riemannian metrics on $X$ Gromov-Hausdorff converges with bounded
curvature to a
two dimensional compact hyperbolic orbifold,
from which the corollary follows.
\end{proof}

\section{Type-II blowdown} \label{sect3}

Let ${\mathcal E}$ be an expanding CMC Einstein flow
that is not
type-III in the sense of Definition \ref{1.54}.  Then we say that
${\mathcal E}$ is a type-IIb Einstein flow.  One can get information
about such a flow by a rescaling analysis.  The rescaling now involves
the size of the curvature tensor, unlike in the type-III
case where the rescaling involves the Hubble time. 

After rescaling and passing to a limit, one obtains an Einstein
flow ${\mathcal E}^\infty$ on an \'etale groupoid, 
defined for times $t \in \R$, with
vanishing mean curvature.  We show that if the
second fundamental form of the original flow
${\mathcal E}$ is controlled by the
mean curvature, then ${\mathcal E}^\infty$ is the static flow
on a Ricci-flat Riemannian groupoid.  In particular, if
${\mathcal E}$ is locally homogeneous or on a three-dimensional
manifold, then ${\mathcal E}^\infty$ is flat.  This may seem to
contradict the fact that the rescalings in the blowdown procedure 
normalize the
size of the curvature tensor, but the point is that the convergence
to ${\mathcal E}^\infty$ is in the {\em weak} $W^{2,p}$-topology.
Relevant example come from the homogeneous Einstein flows on
$\widetilde{\SL(2, \R)}$ considered in \cite{Ringstrom (2006)}.

More generally, we show that if ${\mathcal E}$ is a type-IIb Einstein
flow on a three dimensional manifold then the
second fundamental form
fails to be controlled by the mean curvature, or
the first covariant derivative of the curvature tensor
fails to be controlled by the curvature norm.

To begin, let ${\mathcal E}$ be a type-IIb Einstein flow on a compact
$n$-dimensional manifold.
Given $t \in [t_0, \infty)$,
let $x_t \in X$ be a point where the time-$t$ curvature norm
$|\Rm|_T$ is maximized.  

\begin{proposition} \label{3.1}
We can find a sequence $\{t_i\}_{i=1}^\infty$ with
$\lim_{i \rightarrow \infty} t_i = \infty$ such that the following
property holds. 
Put $Q_i = |\Rm|_T(x_i, t_i)$ and
${\mathcal E}^{(i)}(u) = {\mathcal E}_{Q_i^{- \: \frac12}}
(u+Q_i^{\frac12}t_i)$.
Then after passing to a subsequence, there is a limit
$\lim_{i \rightarrow \infty} ({\mathcal E}^{(i)}, x_i) =
({\mathcal E}^\infty, x_\infty)$ in the pointed weak $W^{2,p}$-topology and the
pointed $C^{1,\alpha}$-topology.  Here ${\mathcal E}^\infty$ is an Einstein
flow on an $n$-dimensional \'etale groupoid, defined for
$t \in \R$. If there is some $C < \infty$ such that
$|K|^2 \le C H^2$ then $L^\infty$ is uniformly bounded below by a 
positive constant.
\end{proposition}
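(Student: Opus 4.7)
The plan is to combine a Hamilton-style point-picking with the groupoid compactness result, Proposition \ref{2.2}. Let $R(t) = \sup_X |\Rm|_T(\cdot, t)$. By the type-IIb hypothesis there is a sequence $\tau_j \to \infty$ with $\tau_j^2 R(\tau_j) \to \infty$. On each interval $[\tau_j/2, 3\tau_j/2]$ the continuous function $F_j(t) = R(t)^{1/2} \min(t - \tau_j/2, \, 3\tau_j/2 - t)$ vanishes at the endpoints and is positive inside, so it achieves its maximum at an interior point $t_j$; set $\rho_j = \min(t_j - \tau_j/2, \, 3\tau_j/2 - t_j)$ and $Q_j = R(t_j)$. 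From $F_j(t_j) \ge F_j(\tau_j) = (\tau_j/2) R(\tau_j)^{1/2}$ one reads off $Q_j^{1/2} \rho_j \to \infty$ and, using $\rho_j \le \tau_j/2$, $Q_j \ge R(\tau_j)$, so also $t_j Q_j^{1/2} \to \infty$. Moreover, for $t \in [t_j - \rho_j/2, \, t_j + \rho_j/2]$ the maximality $F_j(t) \le F_j(t_j)$ gives $R(t) \le 4 Q_j$. I take $t_i = t_j$, pick $x_i$ realizing $R(t_i)$, and form ${\mathcal E}^{(i)}$ as in the statement; by construction $|\Rm|_T^{(i)} \le 4$ on $[-A_i, A_i]$ with $A_i = \rho_i Q_i^{1/2}/2 \to \infty$, while the rescaled existence interval $[Q_i^{1/2}(t_0 - t_i), \infty)$ exhausts $\R$.

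The second step is to feed this data into Proposition \ref{2.2} (applied in the groupoid form of Remark \ref{2.3} to the manifold flows ${\mathcal E}^{(i)}$). A direct computation using $H = -n/t$ gives $H^{(i)}(u) = -n Q_i^{-1/2}/(t_i + Q_i^{-1/2} u)$, from which $H^{(i)}$ and all of its $u$-derivatives tend uniformly to zero on compact subsets of $\R$ (the key point being $t_i Q_i^{1/2} \to \infty$), while $-\partial_u(1/H^{(i)}) = 1/n$ identically. Combined with the curvature bound from Step 1, these are exactly the hypotheses of Proposition \ref{2.2}, and after passing to a subsequence I extract a pointed limit ${\mathcal E}^\infty$ on an $n$-dimensional \'etale groupoid, defined for all $u \in \R$, with convergence in the weak $W^{2,p}$ and $C^{1,\alpha}$ topologies.

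Finally, under the assumption $|K|^2 \le C H^2$, the identity $(H^{(i)})^2 = n \, \partial_u H^{(i)}$ (immediate from the formulas above) shows that the rescaled flows inherit $|K^{(i)}|^2 \le Cn \, \partial_u H^{(i)}$, which is the hypothesis of the second half of Proposition \ref{2.2}; tracing through its proof via (\ref{1.14}) gives the uniform pointwise lower bound $L^{(i)} \ge 1/(Cn)$. Equivalently, (\ref{1.14}) applied to the original flow yields $L \ge \partial_t H/\sup_X |K|^2 \ge 1/(Cn)$ directly, a scale-invariant pointwise bound that passes to $L^\infty$ by the $C^{1,\alpha}$-convergence. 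I expect Step 1 to be the main obstacle: one must arrange curvature control on a time interval that grows to infinity on \emph{both} sides of $t_i$ after parabolic rescaling, which is why the weighted point-picking via $F_j$ is needed rather than a naive maximization of $t^2 R(t)$.
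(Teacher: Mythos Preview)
Your proof is correct and follows the same approach as the paper: Hamilton-style point-picking to produce times $t_i$ with $Q_i t_i^2 \to \infty$ and uniform rescaled curvature bounds on intervals exhausting $\R$, then an appeal to Proposition~\ref{2.2} for compactness, and finally the scale-invariant inequality $|K^{(i)}|^2 \le Cn\,\partial_u H^{(i)}$ for the lapse lower bound. The paper simply cites \cite[Chapter 8.2.1.3]{Chow-Lu-Ni (2006)} for the point-picking, whereas you carry it out explicitly via the weighted functional $F_j$; otherwise the two arguments are the same (your constant $Cn$ is correct---the paper's $C/n$ in (\ref{3.3}) is a typo).
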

\begin{proof}
As in \cite[Chapter 8.2.1.3]{Chow-Lu-Ni (2006)}, 
we can make an initial choice of the $t_i$'s so that 
$\lim_{i \rightarrow \infty} Q_i t_i^2 = \infty$ and
for any
compact time interval $S \subset \R$, there are bounds on
$|\Rm|_T$ on $S$ for the rescaled flows ${\{\mathcal E}^{(i)}\}_{i=1}^\infty$
that are uniform in $i$. This implies uniform bounds on $S$ for $|K^{(i)}|$ 
\cite[Proposition 2.2]{Anderson (2001)}.
The rescaled Einstein flow ${\mathcal E}^{(i)}$ has
\begin{align} \label{3.2}
|\Rm^{(i)}|_T(u) \: = \: & Q_i^{-1} |\Rm|_T(Q_i^{- \: \frac12}u+t_i), \\
|K^{(i)}|(u) \: = \:  & 
Q_i^{- \: \frac12} |K|(Q_i^{- \: \frac12}u+t_i), \notag \\
H^{(i)}(u) \: = \:  & 
- \: \frac{nQ_i^{- \: \frac12}}{Q_i^{- \: \frac12}u+t_i}. \notag
\end{align}
From Proposition \ref{2.2}, after passing to a subsequence there is a limit
$\lim_{i \rightarrow \infty} ({\mathcal E}^{(i)}, x_i) =
({\mathcal E}^\infty, x_\infty)$ as stated.
If $|K(t)|^2 \le C H(t)^2 = C \frac{n^2}{t^2}$ then
\begin{equation} \label{3.3}
|K^{(i)}|^2(u) = Q_i^{-1} |K|^2(Q_i^{- \: \frac12}u+t_i) \le C Q_i^{-1} 
\frac{n^2}{(Q_i^{- \: \frac12}u+t_i)^2} = 
\frac{C}{n} \frac{\partial H^{(i)}(u)}{\partial u}.
\end{equation}
From Proposition \ref{2.2}, the lapse function $L^\infty$ is positive.
As in the proof of Corollary \ref{1.55},
it is uniformly bounded below by a positive constant.
(It is bounded above by one.)
\end{proof}

As noted in
\cite[Section 5]{Anderson (2001)}, because of the renormalization,
the flow ${\mathcal E}^\infty$ has vanishing mean curvature $H^\infty$,
since $\lim_{i \rightarrow \infty} H^{(i)}(u) = - \lim_{i \rightarrow \infty}
\frac{n}{u + Q_i^{\frac12} t_i} = 0$.

\begin{proposition} \label{3.4}
If a type-IIb expanding CMC Einstein flow 
${\mathcal E}$ has a uniform upper bound on
$\frac{|K|^2}{H^2}$ then
the blowdown
limit ${\mathcal E}^\infty$ is a static Einstein flow on a Ricci-flat 
Riemannian
groupoid.
\end{proposition}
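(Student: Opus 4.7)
The plan is to extract the limit Einstein flow $\mathcal{E}^\infty$ from Proposition \ref{3.1} and systematically pass each of the Einstein flow equations to the limit. The main steps, in order, are: (i) show $K^\infty \equiv 0$ from the hypothesis, which trivializes the evolution equation (\ref{1.4}) and the constraint (\ref{1.2}); (ii) pass the lapse equation (\ref{1.13}) to the limit to get $\triangle_{h^\infty} L^\infty = 0$; (iii) pass the $K$-evolution equation (\ref{1.5}) to the limit to obtain the static vacuum relation between $L^\infty$ and $R^\infty_{ij}$; (iv) combine these to conclude $L^\infty$ is constant, and hence $R^\infty_{ij} = 0$.

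For (i), the rescaling identities (\ref{3.2}) and the hypothesis $|K|^2 \le CH^2$ give
\begin{equation*}
|K^{(i)}|^2(u,x) = Q_i^{-1} |K|^2(Q_i^{-1/2}u + t_i, x) \le \frac{Cn^2}{(u + Q_i^{1/2}t_i)^2},
\end{equation*}
which tends to $0$ uniformly on compact $u$-intervals as $Q_i^{1/2}t_i \to \infty$. The $C^{1,\alpha}$-convergence of Proposition \ref{3.1} then gives $K^\infty \equiv 0$ on $\R \times \mathcal{X}^\infty$, whence $\partial_u h^\infty = 0$ from (\ref{1.4}) and $R^\infty = 0$ from (\ref{1.2}). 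For (ii), plugging $\partial_u H^\infty = 0$, $H^\infty = 0$, and $K^{0,\infty} = 0$ into (\ref{1.13}) gives $\triangle_{h^\infty} L^\infty = 0$. For (iii), substituting $\partial_u K^\infty = K^\infty = H^\infty = 0$ into (\ref{1.5}) yields the static vacuum equation
\begin{equation*}
L^\infty_{;ij} = L^\infty R^\infty_{ij}.
\end{equation*}
Since $R^\infty_{ij}$ is $u$-independent and the equation is linear in $L^\infty$, positive solutions factor as $L^\infty(u,x) = \alpha(u) L_0(x)$; reparametrizing the time coordinate by $d\tilde u = \alpha(u)\,du$ absorbs $\alpha$, so $L^\infty$ may be taken $u$-independent and $\partial_u$ is then a timelike Killing field for $g^\infty$.

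The hard part is step (iv): upgrading the static vacuum equation to $R^\infty_{ij} = 0$, equivalently showing that $L^\infty$ is constant on the spatial groupoid. The maximum principle applied to (\ref{1.28}) in the original flow gives $L \le 1$, so $L^\infty \le 1$, while Proposition \ref{3.1} supplies the lower bound $L^\infty \ge (Cn)^{-1}$. Thus $1 - L^\infty$ is a nonnegative bounded harmonic function on the complete Riemannian groupoid $(\mathcal{X}^\infty, h^\infty)$, and the strong maximum principle on the unit space forces $L^\infty \equiv 1$ as soon as $L^\infty$ attains the value $1$ somewhere. The main obstacle is ruling out the strictly-less-than-$1$ case; the approach I would pursue exploits the fact that $\mathcal{E}^\infty$ is defined for all $u \in \R$, together with completeness of the spatial slice and the uniform positive lower bound on $L^\infty$, via a rigidity argument for complete static vacuum spacetimes. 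Setting $\phi = \ln L^\infty$, the equation $\triangle \phi + |\nabla \phi|^2 = 0$ combined with boundedness of $\phi$ and Bochner-type manipulations should force $\phi$ to be constant. Once $L^\infty$ is constant, the static vacuum equation immediately yields $R^\infty_{ij} = 0$, so $(\mathcal{X}^\infty, h^\infty)$ is Ricci-flat and $\mathcal{E}^\infty$ is the static flow on this groupoid.
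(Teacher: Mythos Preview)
Your steps (i)--(iii) are correct and match the paper: the rescaling hypothesis forces $K^\infty=0$, hence $h^\infty$ is static, $R^\infty=0$, and the remaining Einstein flow equations reduce to the static vacuum system $\triangle_{h^\infty}L^\infty=0$ and $L^\infty_{;ij}=L^\infty R^\infty_{ij}$ on the Riemannian groupoid $(\mathcal{X}^\infty,h^\infty)$.

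The gap is entirely in step (iv). Your two proposed routes both fail for the same underlying reason: you have no Ricci lower bound on $(\mathcal{X}^\infty,h^\infty)$ --- that is precisely what you are trying to prove. The strong maximum principle for the harmonic function $1-L^\infty$ gives nothing unless the supremum is attained, and on a noncompact complete space there is no reason it should be. Your fallback, ``Bochner-type manipulations'' on $\phi=\ln L^\infty$, needs a sign on the Ricci term in the Bochner formula; without $\operatorname{Ric}_{h^\infty}\ge 0$ the cross terms (e.g.\ $R^\infty_{ij}\nabla^i\phi\nabla^j\phi$) have no sign and the argument does not close. Invoking ``rigidity for complete static vacuum spacetimes'' is not a substitute: the known rigidity results either assume asymptotic conditions you do not have, or are themselves proved by the trick below.

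The paper's device (due to Anderson) is to pass to the warped product $\mathcal{Y}^\infty=\mathcal{X}^\infty\times S^1$ with metric $h^\infty+(L^\infty)^2\,d\theta^2$. The static vacuum equations are exactly the statement that this Riemannian groupoid is Ricci-flat. A short computation shows that $\ln L^\infty$, pulled back to $\mathcal{Y}^\infty$, is harmonic there: for $\theta$-independent $f$ one has $\triangle_{\mathcal{Y}^\infty}f=\triangle_{h^\infty}f+\langle\nabla\ln L^\infty,\nabla f\rangle$, so $\triangle_{\mathcal{Y}^\infty}\ln L^\infty=-|\nabla\ln L^\infty|^2+|\nabla\ln L^\infty|^2=0$. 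Now you are on a complete space with $\operatorname{Ric}\ge 0$, and $\ln L^\infty$ is a bounded harmonic function (bounded because $(Cn)^{-1}\le L^\infty\le 1$). Yau's Liouville theorem, which extends to the groupoid setting, forces $\ln L^\infty$ to be constant; then $R^\infty_{ij}=0$ follows immediately from $L^\infty_{;ij}=L^\infty R^\infty_{ij}$.
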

\begin{proof}
Because of the rescaling, the blowdown
limit has vanishing second fundamental form $K^\infty$.
Hence ${\mathcal E}^\infty$ is
a static Einstein flow on a Riemannian groupoid 
$({\mathcal X}^\infty, h^\infty)$.
The static Einstein flow equations
become $L^\infty R^\infty_{ij} = L^\infty_{;ij}$ and 
$\triangle_{h^\infty} L^\infty = 0$.
In the smooth structure on the unit space of ${\mathcal E}^\infty$
coming from local harmonic coordinates, by elliptic regularity 
the metric $h^\infty$ is smooth
and $L^\infty$ is smooth.
We use the trick from \cite[Appendix]{Anderson (1999)} of passing
to ${\mathcal Y}^\infty = {\mathcal X}^\infty \times S^1$ 
with the Riemannian metric
$h^\infty + (L^\infty)^2 d\theta^2$, which is Ricci-flat.
The function $\log L^\infty$ is a bounded harmonic function on
${\mathcal Y}^\infty$.  The proof of
\cite[Corollary 1]{Yau (1975)} extends to the groupoid setting to show that
$L^\infty$ is constant. Then $h^\infty$ is Ricci-flat.
This shows that
${\mathcal E}^\infty$ is a static Einstein flow on a Ricci-flat
Riemannian groupoid ${\mathcal X}^\infty$, thereby proving the proposition.
\end{proof}

\begin{corollary} \label{3.5}
Under the hypotheses of Proposition \ref{3.4}, if $X$ is locally
homogeneous then the type-IIb blowdown
limit ${\mathcal E}^\infty$ is a static Einstein flow on a flat 
Riemannian groupoid.
\end{corollary}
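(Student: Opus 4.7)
The plan is to leverage Proposition \ref{3.4}, which already shows that ${\mathcal E}^\infty$ is a static Einstein flow on a Ricci-flat Riemannian groupoid, and then upgrade ``Ricci-flat'' to ``flat'' by invoking local homogeneity together with a classical rigidity result.

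First I would observe that the local homogeneity of $X$ is inherited by every time slice $(X, h(t_i))$, and hence by the rescalings $(X, Q_i \, h^{(i)}(0))$ and $(X, h^{(i)}(0))$, since rescaling is a homothety and preserves the pseudogroup of local isometries. The uniform bounds on $|\Rm|_T$ that appear in the proof of Proposition \ref{3.1} give uniform injectivity radius bounds on the local universal covers of the rescaled slices, and Killing vector fields satisfy an overdetermined elliptic system (determined by their $1$-jet at a point), so local Killing fields enjoy uniform $C^{k,\alpha}$-estimates on these covers. Under pointed weak $W^{2,p}$- and pointed $C^{1,\alpha}$-convergence, a standard diagonal/compactness argument (as used to produce the limit groupoid in Proposition \ref{2.2}) then transfers these local Killing fields to the limit, so that the unit space $({\mathcal X}^\infty_{(0)}, h^\infty)$ admits a transitive pseudogroup of local isometries. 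In particular $h^\infty$ is locally homogeneous.

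Next, by Proposition \ref{3.4}, $h^\infty$ is Ricci-flat. I would then invoke the Alekseevskii--Kimelfeld theorem: a homogeneous Ricci-flat Riemannian manifold is flat. To apply this at the level of the groupoid, I would pass to the germ-universal cover of a point of ${\mathcal X}^\infty_{(0)}$; the local isometry pseudogroup integrates to a transitive action of a connected Lie group of isometries on this cover, producing a genuine homogeneous Ricci-flat Riemannian manifold. The Alekseevskii--Kimelfeld theorem then forces flatness, and flatness descends back to $h^\infty$ on ${\mathcal X}^\infty$. Combining this with the identification of $L^\infty$ as a constant positive function from Proposition \ref{3.4}, we conclude that ${\mathcal E}^\infty$ is a static Einstein flow on a flat Riemannian groupoid.

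The main obstacle is the transfer of local homogeneity through the collapsing limit, because convergence is only in weak $W^{2,p}$ and in norm $C^{1,\alpha}$. One must verify that the sheaf of local Killing fields passes to the limit and still acts transitively; this requires uniform control on the norm and $1$-jets of local Killing fields on the local universal covers of the rescaled slices, which in turn rests on the uniform curvature and injectivity radius estimates available in the blowdown setup. Once this transfer is established, the remaining step is purely a citation of Alekseevskii--Kimelfeld, applied on the germ-universal cover where the local pseudogroup becomes a genuine transitive Lie group action.
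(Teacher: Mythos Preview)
Your overall strategy matches the paper's: first argue that local homogeneity survives in the blowdown limit, then combine with the Ricci-flatness from Proposition \ref{3.4} to deduce flatness. The implementations differ, however. For the first step, the paper does not transfer Killing fields directly; instead it pulls back to balls in the tangent spaces (where the geometry is noncollapsed and locally homogeneous) and invokes Gromov's result \cite[p.~66]{Gromov (1981)} that a Gromov--Hausdorff limit of homogeneous spaces is homogeneous. This is a purely metric-level statement and sidesteps the regularity issues you flag about passing Killing fields through a weak $W^{2,p}$/$C^{1,\alpha}$ limit. Your route via uniform estimates on the $1$-jets of local Killing fields is workable---normalized Killing fields do enjoy $C^2$ bounds in terms of curvature and thus pass to the limit with transitivity preserved---but it is more labor-intensive and you would need to spell it out carefully. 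For the second step, the paper cites Spiro \cite{Spiro (1993)}, which is precisely the statement that a locally homogeneous Ricci-flat Riemannian space is flat; your detour through the germ-universal cover to reduce to the global Alekseevskii--Kimelfeld theorem is essentially a reproof of Spiro's result. Both approaches are correct; the paper's is more economical.
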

\begin{proof}
A Gromov-Hausdorff limit of homogeneous spaces is still 
homogeneous \cite[p. 66]{Gromov (1981)}.
Applying this to the balls in the tangent spaces,
it follows that $(X^\infty, h^\infty(t))$ is a locally homogeneous
Ricci-flat Riemannian groupoid, and hence is flat 
\cite{Spiro (1993)}.
\end{proof}

\begin{corollary} \label{3.6}
Under the hypotheses of Proposition \ref{3.4}, if $n=3$
then the type-IIb blowdown
limit ${\mathcal E}^\infty$ is a static Einstein flow on a flat 
Riemannian groupoid.
\end{corollary}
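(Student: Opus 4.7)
The plan is to combine Proposition \ref{3.4} with the special feature of three-dimensional Riemannian geometry that Ricci-flatness implies flatness. By Proposition \ref{3.4}, the blowdown limit ${\mathcal E}^\infty$ is already known to be a static Einstein flow on a Ricci-flat Riemannian groupoid $({\mathcal X}^\infty, h^\infty)$, with $L^\infty$ a positive constant and $K^\infty = 0$. The only thing left to check is that in our situation $h^\infty$ is actually flat.

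The key observation is that since ${\mathcal E}$ is an Einstein flow on a three-dimensional manifold $X$, the limit groupoid ${\mathcal X}^\infty$ is also three-dimensional. Its unit space ${\mathcal X}^\infty_{(0)}$ is locally a smooth $3$-manifold carrying a smooth Ricci-flat metric $h^\infty$ (smoothness coming from the elliptic regularity argument in the proof of Proposition \ref{3.4}, using harmonic coordinates). In any $3$-manifold the Weyl tensor vanishes identically, so the full Riemann tensor is algebraically determined by the Ricci tensor via the formula
\begin{equation}
R^\infty_{ijkl} = R^\infty_{ik} h^\infty_{jl} - R^\infty_{il} h^\infty_{jk} + R^\infty_{jl} h^\infty_{ik} - R^\infty_{jk} h^\infty_{il} - \tfrac{1}{2} R^\infty \left( h^\infty_{ik} h^\infty_{jl} - h^\infty_{il} h^\infty_{jk} \right).
\end{equation}
Thus $R^\infty_{ij} = 0$ forces $R^\infty_{ijkl} = 0$ pointwise on ${\mathcal X}^\infty_{(0)}$, i.e.\ $h^\infty$ is flat.

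Since the Lorentzian metric on the spacetime groupoid associated to ${\mathcal E}^\infty$ is the static product of $-(L^\infty)^2 du^2$ (with $L^\infty$ constant) and the flat spatial metric $h^\infty$, the limiting flow is a static Einstein flow on a flat Riemannian groupoid, as claimed. There is no real obstacle here beyond invoking the three-dimensional algebraic identity above; the technical content is entirely contained in Proposition \ref{3.4}, which has already furnished Ricci-flatness, smoothness and a positive constant lapse.
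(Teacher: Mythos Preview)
Your proof is correct and follows the same approach as the paper: invoke Proposition \ref{3.4} to get a Ricci-flat Riemannian groupoid, then use that in three dimensions Ricci-flatness implies flatness. The paper's proof is simply the one-line observation ``A Ricci-flat three dimensional Riemannian groupoid is flat,'' and you have supplied the standard justification via the vanishing of the Weyl tensor.
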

\begin{proof}
A Ricci-flat three dimensional Riemannian groupoid is flat.
\end{proof}

It may seem contradictory that although we rescale so that the
norm of the curvature tensor at $(x_i, t_i)$ is one, the
limit flow is flat.  The point is that the convergence to the
limit flow is in the
{\em weak} $W^{2,p}$-topology, which does not imply
pointwise convergence of the curvature norm. In effect, there are 
increasing fluctuations
of the curvature tensor, which average it out to zero.

Under the hypotheses of Corollary \ref{3.6},
one does have pointed $C^{1,\alpha}$-convergence
of the normalized Einstein flows to the flat limit flow. 
In particular, put
$\widehat{h}_i = Q_i \exp_{x_i}^* h(t_i)$, a metric defined at
least on $B_i =B( 0, \pi Q_i^{- \: \frac12} ) \subset T_{x_i}X$.
Then the pointed balls $(B_i, x_i, \widehat{h}_i)$ converge in the
sense of distance geometry, i.e. in the pointed Gromov-Hausdorff topology,
to the flat Euclidean metric on a three dimensional
ball of radius $\pi$. However, their curvature
tensors do not converge.

\begin{corollary} \label{3.7}
When $n=3$, if there is some $C<\infty$ so that
an expanding CMC Einstein flow has $|K|^2 \le CH^2$ and
$|\nabla \Rm|_T(x,t) \le C \sup_{y \in X} |Rm|_T^{\frac32}(y,t)$ 
for all $x \in X$ and $t \in [t_0, \infty)$, 
then the flow must be type-III.
\end{corollary}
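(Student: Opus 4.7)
The plan is to argue by contradiction: assume ${\mathcal E}$ is type-IIb. Since $|K|^2 \le C H^2$, Proposition \ref{3.1} applies, producing sequences $t_i \to \infty$, $Q_i = |\Rm|_T(x_i,t_i) \to \infty$, and rescaled flows ${\mathcal E}^{(i)}$ with a pointed weak $W^{2,p}$- and $C^{1,\alpha}$-limit $({\mathcal E}^\infty, x_\infty)$. By Corollary \ref{3.6}, ${\mathcal E}^\infty$ is a static Einstein flow on a \emph{flat} Riemannian groupoid, so $|\Rm^\infty|_T \equiv 0$. By the choice of basepoints, however, $|\Rm^{(i)}|_T(x_i, 0) = Q_i^{-1} |\Rm|_T(x_i, t_i) = 1$ for every $i$. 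The aim is to use the derivative hypothesis to upgrade the convergence of the curvature tensor enough that this normalization forces a contradiction.

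The key point is that the derivative hypothesis is scale invariant. Under the parabolic rescaling with $s = Q_i^{-1/2}$, the Lorentzian metric scales by $s^{-2}$, so $|\Rm|_{T}$ scales by $s^2$ and $|\nabla \Rm|_T$ scales by $s^3$. Thus the hypothesis $|\nabla \Rm|_T(x,t) \le C \sup_X |\Rm|_T^{3/2}(\cdot,t)$ descends verbatim to the rescaled flows:
\begin{equation}
|\nabla \Rm^{(i)}|_T(x,u) \le C \sup_{y \in X} |\Rm^{(i)}|_T^{3/2}(y,u).
\end{equation}
Combined with the uniform bound $\sup_X |\Rm^{(i)}|_T \le K_S$ on any compact interval $S \subset \R$ inherent in the proof of Proposition \ref{3.1}, this gives a uniform $C^0$-bound $|\nabla \Rm^{(i)}|_T \le C K_S^{3/2}$ on $S$.

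With this extra derivative bound in hand, pull back the Riemann tensors $\Rm^{(i)}$ to the limit groupoid ${\mathcal X}^\infty$ via the comparison diffeomorphisms of Definition \ref{1.49}, expressed in a fixed smooth atlas (for instance, harmonic coordinates for $h^\infty(0)$). The uniform $C^0$-bounds on $|\Rm^{(i)}|_T$ and $|\nabla \Rm^{(i)}|_T$, together with the uniform lower bound on $L^\infty$ supplied by Proposition \ref{3.1}, yield uniform bounds on the components of $\Rm^{(i)}$ and on their spacetime covariant derivatives. Arzela--Ascoli then extracts a further subsequence along which $\Rm^{(i)}$ converges in $C^0_{\mathrm{loc}}$, and this limit must coincide with the distributional curvature of ${\mathcal E}^\infty$ by uniqueness of the weak $W^{2,p}$-limit. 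Combined with the $C^{1,\alpha}$-convergence of the metrics, the scalars $|\Rm^{(i)}|_T$ converge locally uniformly to $|\Rm^\infty|_T$. Evaluating at the basepoints gives $|\Rm^\infty|_T(x_\infty, 0) = 1$, contradicting flatness and completing the proof.

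The main obstacle is the upgrade from weak $W^{2,p}$-convergence to pointwise convergence of the curvature in the groupoid framework: one must verify that the $C^0$-limit of the pulled-back Riemann tensors is identified with the curvature of the $C^{1,\alpha}$-limit metric on ${\mathcal X}^\infty$, and that taking the pointwise $T$-norm commutes with the joint convergence of $\Rm^{(i)}$, $h^{(i)}$, and $L^{(i)}$. Once this identification is in place, the contradiction at the basepoint is immediate.
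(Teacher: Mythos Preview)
Your proposal is correct and follows essentially the same route as the paper: assume type-IIb, invoke Corollary~\ref{3.6} to get a flat limit, use the scale-invariance of the derivative hypothesis to upgrade the convergence so that curvature passes to the limit pointwise, and reach a contradiction at the basepoint. The only cosmetic difference is in how the upgrade is phrased: the paper says the extra derivative bound yields convergence in the pointed weak $W^{3,p}$-topology (hence strong $W^{2,p}$, hence pointwise convergence of curvature), whereas you run Arzel\`a--Ascoli directly on the curvature tensors; these are equivalent mechanisms for the same step.
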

\begin{proof}
If ${\mathcal E}$ is not type-III 
then Corollary \ref{3.6} applies. From the bound on the 
normalized covariant
derivative of the curvature tensor, we  
have convergence of the normalized Einstein flows to ${\mathcal E}^\infty$ 
in the pointed weak $W^{3,p}$-topology. This implies pointwise 
convergence of the curvature tensors. The
normalized curvature tensors of ${\mathcal E}$
have norm $1$ at $(x_i, t_i)$, but
converge to the vanishing curvature of ${\mathcal E}^\infty$ at
$(x_\infty, 0)$, which is a contradiction.
\end{proof}

\begin{remark} \label{3.8}
Corollary \ref{3.7} can be proven by just working on balls in tangent
spaces, instead of dealing with \'etale groupoids.
\end{remark}

\begin{example} \label{3.9}
An example of a type-IIb Einstein flow was given in
\cite{Ringstrom (2006)}. Consider $\widetilde{\SL(2, \R)}$ with a
left-invariant Riemannian metric $\widetilde{h}(0)$.  
Let $\Gamma$ be a cocompact
lattice in $\widetilde{\SL(2, \R)}$. Let $h(0)$ be the quotient metric
on $X = \Gamma \backslash \widetilde{\SL(2, \R)}$. 
Let $\widetilde{K}(0)$ be a left-invariant symmetric covariant $2$-tensor field
on  $\widetilde{\SL(2, \R)}$. Let $K(0)$ be the quotient $2$-tensor
field on $X$. Let ${\mathcal E}$ be the ensuing Einstein flow on $X$,
with initial conditions $(h(0), K(0))$. 

Let $\R \subset \widetilde{\SL(2, \R)}$ be the lift of
$\SO(2) \subset {\SL(2, \R)}$. If $(h(0), K(0))$ is right-$\R$ invariant
then ${\mathcal E}$ is type-III.  Otherwise, it is type-IIb
\cite{Ringstrom (2006)}.

In the latter case, we claim that Corollary \ref{3.6} applies.
This follows from  results in \cite[Pf. of Theorem 3]{Ringstrom (2006)}.
In the notation there, the normalized traceless part $\frac{K^0}{H}$ of the
second fundamental form is determined by $\Sigma_{\pm}$. It is shown that
$\Sigma_{\pm}$ are uniformly bounded in $t$. Hence the blowdown Einstein
flow is the static flow on a flat Riemannian groupoid.

We claim that this limit groupoid is $\R^2 \times (\R \rtimes \R_\delta)$,
where $\R_\delta$ denotes $\R$ with the discrete topology.
From \cite[Theorem 3]{Ringstrom (2003)}, we can write
$h(t) = \sum_{i=1}^3 a_i^2(t) \xi^i \otimes \xi^i$ with
$a_1(t) \sim \alpha_1 (\ln t)^{\frac12}$ and $a_i(t) \sim \alpha_i t$
for $i \in \{2,3\}$. Here $\alpha_1,
\alpha_2, \alpha_3 > 0$. From 
\cite[Theorem 3]{Ringstrom (2006)}, we have
$|\Rm|_T(t) \sim \frac{c_0}{t \ln t}$ for some $c_0 > 0$.
Hence the normalized lengths are
comparable to 
$(t \ln t)^{- \: \frac12} (\ln t)^{\frac12}$ in the $1$-direction, and
$(t \ln t)^{- \: \frac12} t$ in the $2$ and $3$ directions.
That is, there are two expanding directions and one shrinking
direction. Then the limit flat \'etale groupoid must be
$\R^2 \times (\R \rtimes \R_\delta)$.

The fact that there are increasing fluctuations of the curvature
tensor, which cause its averaging out to zero, is consistent
with the nonuniform behavior shown in 
\cite[Theorem 3 and Proposition 2]{Ringstrom (2006)}.
\end{example}

\appendix

\section{Monotonicity formulas} \label{sectA}

In this section we derive monotonicity formulas for
dimensionally reduced Einstein flows.  We consider
a coupled system on a connected compact manifold $B$, where the fields on $B$
are (locally) a Lorentzian metric $g$, an $\R^N$-valued connection $A$ and
a map $G$ to positive definite $(N \times N)$-matrices.
Such coupled systems arise, for example, when doing dimensional reduction of
the vacuum Einstein equation on a manifold $M$ with a free $T^N$-action,
and quotient space $B$. 
The vacuum Einstein equation on $M$
becomes a coupled system consisting of a nonvacuum Einstein
equation for $g$, a Yang-Mills-type equation for $A$ and a
wave-type equation for $G$.
In Subsection \ref{subsectA.1} we write the equations and begin
their analysis.

Subsections \ref{subsectA.2} and \ref{subsectA.3} are concerned with 
monotonicity formulas when there is a spacetime
decomposition for which $\det(G)$ is constant along spatial
hypersurfaces. In \cite{Berger-Chrusciel-Isenberg-Moncrief (1997)}
and related papers, 
such a spacetime decomposition is said to provide
``areal'' coordinates. It is especially relevant when $\dim(B) = 2$.

Subsection \ref{subsectA.4} deals with a monotonic quantity that
exists when $\dim(B) > 2$. In the case when $\dim(B) = 3$ and $N = 1$, it
reduces to the ``first energy'' of Choquet-Bruhat 
\cite{Choquet-Bruhat (2004)}
and Choquet-Bruhat-Moncrief \cite{Choquet-Bruhat-Moncrief (2001)}.

More detailed descriptions are given at the beginnings of the subsections.

The results of this appendix extend in a straightforward way to the
setting where $B$ is an orbifold.  In the appendix, we only consider
the case when $B$ is a manifold.

\subsection{Curvature formulas under an $\R^N$-symmetry} 
\label{subsectA.1}

We begin with the geometric setup of
\cite[Section 4.1]{Lott (2010)}, to which we refer for more details.
Let ${\mathcal G}$ be an $N$-dimensional abelian Lie group, with Lie algebra
${\frak g}$.
Let ${\frak E}$ 
be a local system on $B$ of Lie groups isomorphic to ${\mathcal G}$.
There is a corresponding flat ${\frak g}$-vector bundle $e$ on $B$;
see \cite[Section 4.1]{Lott (2010)}.

Let $M$ be the total space of an 
${\frak E}$-twisted principal ${\mathcal G}$-bundle
with base $B$, in the sense of \cite[Section 4.1]{Lott (2010)}.
(An example is when ${\frak E}$ is the constant local system and
$M$ is the total space of a $T^N$-bundle on $B$.)
We write $\dim(B) = n+1$ and $\dim(M) = m = N+n+1$.

Let
$\overline{g}$ be a Lorentzian metric on $M$ with a
free local isometric ${\frak E}$-action. We assume that the
induced metrics on the ${\frak E}$-orbits are Riemannian.
In adapted coordinates, we can write
\begin{equation} \label{A.1} 
\overline{g} \: = \: \sum_{I,J=1}^N G_{IJ} \: (dx^I + A^I) (dx^J + A^J) \: + \:
\sum_{\alpha, \beta = 1}^{n+1} g_{\alpha \beta} \: db^\alpha db^\beta.
\end{equation}
Here
$G_{IJ}$ is the local expression of a Euclidean inner product on
$e$,
$\sum_{\alpha, \beta = 1}^{n+1} g_{\alpha \beta} \: db^\alpha db^\beta$ is
the local expression of a Lorentzian metric $g_B$ on $B$ and
$A^I = \sum_{\alpha} A^I_\alpha db^\alpha$ are the components of
a local $e$-valued $1$-form describing an connection $A$ on the
twisted ${\frak G}$-bundle $M \rightarrow B$.

Put $F^I_{\alpha \beta} = \partial_\alpha A^I_\beta - 
\partial_\beta A^I_\alpha$.
At a given point $b \in B$, we can assume that $A^I(b) = 0$.
We write
\begin{equation} \label{A.2}
G_{IJ;\alpha \beta} \: = \: G_{IJ;\alpha \beta} \: - \:
\Gamma^{\sigma}_{\: \: \alpha \beta} \: G_{IJ, \sigma},
\end{equation}
where $\{\Gamma^{\sigma}_{\: \: \alpha \beta}\}$ are the
Christoffel symbols for the metric $g_{\alpha \beta}$ on $B$.

From \cite[Section 4.2]{Lott (2010)},
the Ricci tensor of ${\overline g}$ on $M$
is given in terms of the curvature tensor
$R_{\alpha \beta \gamma \delta}$ of $B$, the $2$-forms $F^I_{\alpha \beta}$
and the metrics $G_{IJ}$ by
\begin{align} \label{A.3} 
\overline{R}_{IJ}^{\overline{g}} 
\:  =  \: & - \: \frac12 \: g^{\alpha \beta} \: 
G_{IJ; \alpha \beta} \: - \: \frac14 \: g^{\alpha \beta} \:
G^{KL} \: G_{KL, \alpha} \: G_{IJ, \beta} \: + \:
\frac12 \: g^{\alpha \beta} \: G^{KL} \: G_{IK, \alpha} \:
G_{LJ, \beta} \: + \\
&  \frac14 \: g^{\alpha \gamma} \: g^{\beta \delta} \:
G_{IK} \: G_{JL} \: F^K_{\alpha \beta} \: F^L_{\gamma \delta} \notag \\
\overline{R}_{I \alpha}^{\overline{g}}
 \:  =  \: & \frac12 \: g^{\gamma \delta} \:
G_{IK} \: F^K_{\alpha \gamma; \delta} \: + \: \frac12 \: g^{\gamma \delta} \:
G_{IK, \gamma} \: F^K_{\alpha \delta} \: + \: \frac14 \:
g^{\gamma \delta} \: G_{Im} \: G^{KL} \: G_{KL, \gamma} \: F^m_{\alpha \delta} 
\notag \\
\overline{R}_{\alpha \beta}^{\overline{g}} 
\:  =  \: & R_{\alpha \beta}^g \: - \: 
\frac12 \: G^{IJ} \: G_{IJ; \alpha \beta} \: + \: \frac14 \:
G^{IJ} \: G_{JK,\alpha} \: G^{KL} \: G_{LI,\beta} \: - \:
\frac12 \: g^{\gamma \delta} \: \: G_{IJ} \: F^I_{\alpha \gamma} \:
F^J_{\beta \delta}. \notag
\end{align}
The scalar curvature is
\begin{align} \label{A.4}
\overline{R}^{\overline{g}} \: = \: & 
R^g \: - \: g^{\alpha \beta} G^{IJ}  \: G_{IJ; \alpha \beta} \: + \: 
\frac34 \: g^{\alpha \beta} \: G^{IJ} \: G_{JK, \alpha} \: G^{KL} \:
G_{LI, \beta} \\
& \: - \: \frac14 \: g^{\alpha \beta} \: G^{IJ} \: 
G_{IJ, \alpha} \: G^{KL} \: G_{KL, \beta} \: - 
 \: \frac14 \:
g^{\alpha \gamma} \: g^{\beta \delta} \: G_{IJ} \:
F^I_{\alpha \beta} \: F^J_{\gamma \delta}. \notag
\end{align}

In what follows we will assume that the flat vector bundle $e$ has
holonomy in $\SL(N, \R)$, so that $\ln \det G$ is globally defined
on $B$. We have
\begin{equation} \label{A.5}
\nabla_\alpha \ln \det G = G^{IJ} G_{IJ, \alpha}
\end{equation}
and
\begin{equation} \label{A.6}
\triangle_g \ln \det G = 
g^{\alpha \beta} G^{IJ} G_{IJ; \alpha \beta} - 
g^{\alpha \beta} G^{IJ} G_{JK, \alpha} G^{KL} G_{LK, \beta}.
\end{equation}
Writing
\begin{equation} \label{A.7}
|F|^2 = G_{IJ} g^{\alpha \beta} g^{\gamma \delta} F^I_{\alpha \gamma}
F^J_{\beta \delta},
\end{equation}
the first equation in (\ref{A.3}) gives
\begin{equation} \label{A.8}
G^{IJ} \overline{R}_{IJ} =
- \frac12 \triangle_g \ln \det G - \frac14 g^{\alpha \beta}
(\nabla_\alpha \ln \det G) (\nabla_\beta \ln \det G) + \frac14 |F|^2.
\end{equation}
Note that $|F|^2$ need not be
nonnegative.

Given a foliation of $B$ by
compact spacelike hypersurfaces $Y$, we can write the metric $g$ on $B$ as
\begin{equation} \label{A.9}
g = - L^2 dt^2 + \sum_{i,j=1}^n h_{ij} dy^i dy^j.
\end{equation}
Here $L = L(y,t)$ is the lapse function and we have performed
spatial diffeomorphisms to kill the shift vectors.

\subsection{Monotonicity formulas for equivolume foliations}
\label{subsectA.2}

In this subsection we introduce a first monotonicity formula for
equivolume foliations.
Suppose that $\det G$ is spatially constant, i.e. only depends on $t$.
Then 
\begin{equation} \label{A.10}
g^{\alpha \beta}
(\nabla_\alpha \ln \det G) (\nabla_\beta \ln \det G) =
- \: L^{-2} ( \partial_t \ln \det G )^2
\end{equation}
and
\begin{equation} \label{A.11}
\triangle_g \ln \det G = 
- \: \frac{1}{L \sqrt{\det h}} \partial_t
\left( L^{-1} \sqrt{\det h} (\partial_t \ln \det G) \right).
\end{equation}

If $\overline{R}^{\overline{g}}_{IJ} = 0$ then (\ref{A.8}) becomes
\begin{equation} \label{A.12}
0 = \frac12 \frac{1}{L \sqrt{\det h}} \partial_t
\left( L^{-1} \sqrt{\det h} (\partial_t \ln \det G) \right) +
\frac14 L^{-2} ( \partial_t \ln \det G )^2 + \frac14 |F|^2.
\end{equation}
Multiplying by $L \sqrt{\det h}$ and integrating over $Y$ gives
\begin{align} \label{A.13}
\frac{\partial}{\partial t} \left( (\partial_t \ln \det G) \int_Y
L^{-1} \dvol_Y  \right) = & - \: \frac12
(\partial_t \ln \det G)^2 \int_Y
L^{-1} \dvol_Y \\
& - \: \frac12 \: \int_Y
|F|^2 L \dvol_Y. \notag
\end{align}
If $F = 0$ then $(\partial_t \ln \det G) \int_Y
L^{-1} \dvol_Y$ is monotonically nonincreasing in $t$.

\subsection{Two dimensions} \label{subsectA.3}

In this subsection we specialize to the case when $\dim(B) = 2$.
We begin with some generalities.  In Subsubsection 
\ref{subsubsectA.3.1} we
consider monotonic quantities in the case $F=0$. Besides the
monotonic quantity of Subsection \ref{subsectA.2}, we analyze
an energy-like monotonic functional $\widehat{\mathcal E}$. 

In Subsubsection \ref{subsubsectA.3.2} we look at the case when $F$ is nonzero.
In order to apply results from the literature, in that subsubsection we
specialize to the case $N=2$. We introduce the monotonic
quantity $\widehat{\mathcal E}_K$ and show that it is well-defined
no matter what the global twisting $H \in \SL(2, \R)$ may be.
We characterize when $\widehat{\mathcal E}_K$ is constant in $t$.

Continuing with Subsection \ref{subsectA.2},
suppose that $\dim(B) = 2$, i.e. $\dim(Y) = 1$.
We write $g$ locally (in $Y$) as $- L^2 dt^2 + h dy^2$.
We have
$R^g_{\alpha \beta} = \frac12 R g_{\alpha \beta}$, so
$g^{tt} R^g_{tt} = g^{yy} R^g_{yy}$. Hence
$- L^{-2} R^g_{tt} = h^{-1} R^g_{yy}$.
If $\overline{R}^{\overline{g}}_{\alpha \beta} = 0$ then the
third equation of (\ref{A.3}) gives
\begin{align} \label{A.14}
& L^{-2} \Tr \left( G^{- \: \frac12} G_{,t} G^{- \: \frac12} \right)^2 +
h^{-1} \Tr \left( G^{- \: \frac12} G_{,y} G^{- \: \frac12} \right)^2 = \\
& L^{-2} \Tr \left( G^{- \: 1} G_{,t} \right)^2 +
h^{-1} \Tr \left( G^{- \: 1} G_{,y}  \right)^2 = \notag \\
& - 2 L^{-2} (\ln \det G)_{;tt} =  
- 2 L^{-2} (\ln \det G)_{tt} + 2 L^{-3} L_t (\ln \det G)_t. \notag
\end{align}
If in addition $(\ln \det G)_t = 0$ then 
from (\ref{A.14}), 
$G^{- \: \frac12} G_{,t} G^{- \: \frac12}$ and
$G^{- \: \frac12} G_{,y} G^{- \: \frac12}$ vanish, so $G$ is locally
constant in $y$ and $t$. Then the
third equation of (\ref{A.3}) gives $R^g_{\alpha \beta} = 0$, so $B$ is flat.
The holonomy around $Y$ of the flat vector bundle $e$ must be orthogonal.

\subsubsection{Gowdy spacetime} \label{subsubsectA.3.1}
 
In this subsubsection we assume that $F=0$.
From (\ref{A.13}),
$(\partial_t \ln \det G) \int_Y
L^{-1} \dvol_Y$ is monotonically nonincreasing in $t$.
If it is constant in $t$
then the right-hand side of (\ref{A.13}) vanishes, so
$\partial_t \ln \det G = 0$. Hence $G$ is locally constant in $y$ and $t$, and
$B$ is flat.

For another monotonic quantity, consider
\begin{align} \label{A.15}
{\mathcal E}(t) = & \int_{Y} \left[ h^{-1} \Tr \left( \left( G^{-1} 
\frac{\partial G}{\partial y} \right)^2 \right) + L^{-2}
\Tr \left( \left( G^{-1} 
\frac{\partial G}{\partial t} \right)^2 \right) \right] L \dvol \\
= & \int_{Y} \left[ L h^{- \: \frac12} \Tr \left( \left( G^{-1} 
\frac{\partial G}{\partial y} \right)^2 \right) + L^{-1} h^{\frac12}
\Tr \left( \left( G^{-1} 
\frac{\partial G}{\partial t} \right)^2 \right) \right] dy. \notag
\end{align}
Still assuming that $F=0$, equation (\ref{A.12}) gives
\begin{equation} \label{A.16}
(\ln \det G)_t \partial_t (L h^{- \: \frac12}) =
L h^{- \: \frac12} ((\ln \det G)_{tt} + \frac12 (\ln \det G)_t^2 ). 
\end{equation}
When $\overline{R}^{\overline{g}}_{IJ} = 0$, 
equation (\ref{A.3}) gives the matrix equation
\begin{align} \label{A.17}
& - L^{-2} (G^{-1} G_{tt} - G^{-1} G_t G^{-1} G_t) +
h^{-1} (G^{-1} G_{yy} - G^{-1} G_y G^{-1} G_y) + \\
& L^{-3} L_t G^{-1} G_t + L^{-1} h^{-1} L_y G^{-1} G_y
- \frac12 L^{-2} h^{-1} h_t G^{-1} G_t - \notag \\
& \frac12
h^{-2} h_y G^{-1} G_y - \frac12 L^{-2} (\ln \det G)_t G^{-1} G_t = 0. \notag
\end{align}
Using (\ref{A.16}) and (\ref{A.17}), one finds
\begin{align} \label{A.18}
\frac{d}{dt} \left( (\ln \det G)_t {\mathcal E} \right) =
& \left( 2 (\ln \det G)_{tt} + \frac12 (\ln \det G)_t^2 \right) 
{\mathcal E} - \\
& \frac12 (\ln \det G)_t^2 \int_{Y} L^{-1} \Tr \left( 
\left( G^{-1} G_t \right)^2 \right) \dvol. \notag
\end{align}

If $(\ln \det G)_t \neq 0$ then
a scale invariant quantity is given by
\begin{equation} \label{A.19}
\widehat{{\mathcal E}}(t) = \frac{2}{(\ln \det G)_t \sqrt{\det G}} 
{\mathcal E}(t).
\end{equation}
Using (\ref{A.18}), one finds
\begin{equation} \label{A.20}
\frac{d\widehat{{\mathcal E}}}{dt} \: = \: - \: \frac{1}{\sqrt{\det G}}
\int_{Y} L^{-1} \Tr \left( 
\left( G^{-1} G_t \right)^2 \right) \dvol
\end{equation}
If the right-hand side of (\ref{A.20}) vanishes then $G$ is constant in $t$.
As before, this implies that $G$ is constant in $y$ and $t$, and
$B$ is flat.

\begin{remark} \label{A.21}
If we use the areal time variable $t = \sqrt{\det G}$ then
$\widehat{\mathcal E}(t) = {\mathcal E}(t)$ and
\begin{equation}
\frac{d\widehat{{\mathcal E}}}{dt} \: = \: - \: \frac{1}{t}
\int_{Y} L^{-1} \Tr \left( 
\left( G^{-1} G_t \right)^2 \right) \dvol;
\end{equation}
compare with (\ref{A.26}) and (\ref{A.27}).
\end{remark}

\subsubsection{NonGowdy spacetime} \label{subsubsectA.3.2}

We now assume that $F \neq 0$. 
If $\overline{R}_{I \alpha}^{\overline{g}} = 0 $ then 
from the second equation in (\ref{A.3}), one finds that the
$\R^N$-valued vector 
\begin{equation} \label{A.23}
C_I = L^{-1} h^{- \: \frac12} \: \sqrt{\det G} \: G_{IK} F^K_{ty}
\end{equation}
is locally constant on the two dimensional spacetime.  More
precisely, it is a locally constant section of the flat vector bundle
$e^*$ (using our assumption that $e$ is unimodular).

We now restrict to the case when $N = 2$ and the flat $\R^2$-bundle $e$
has holonomy $H$, around the circle $Y$, lying in $\SL(2, \R)$.  
When $H = \Id$, the components of $C$ are 
called the ``twist quantities'' in
\cite{Berger-Chrusciel-Isenberg-Moncrief (1997)} and subsequent papers
such as \cite{LeFloch-Smulevici (2015)}. 
We mostly follow the notation of \cite[p. 1256-1283]{LeFloch-Smulevici (2015)},
with coordinates $(R, \theta)$ for the two dimensional base. We use
linear coordinates
$x^1, x^2$ for the $\R^2$-fiber.  In that paper, $R = \det G$ and $\theta$
is the coordinate for the spacelike hypersurface $Y$.  The coordinates
$x^1$ and $x^2$ are chosen so that $C_1 = 0$ and $C_2 = K$, where $K$ is
a constant. The Lorentzian metric on
$(0, \infty) \times Y$ can be written as
\begin{equation} \label{A.24}
g = e^{2(\eta - U)} (-dR^2 + a^{-2} d\theta^2) +
e^{2U} (dx^1 + Adx^2 + (G+AH) d\theta)^2 +
e^{-2U}R^2(dx^2+Hd\theta)^2.
\end{equation}
Put
\begin{equation} \label{A.25}
{\mathcal D} = 
a^{-1} U_R^2 + a U_\theta^2 + R^{-2} e^{4U} 
(a^{-1} A_R^2 + a A_\theta^2)
\end{equation}
and 
\begin{equation} \label{A.26}
\widehat{\mathcal E}_K(R) = \int_{Y} \left( {\mathcal D}
+ \frac14 K^2 R^{-4} e^{2\eta} a^{-1} \right) d\theta. 
\end{equation}
Then from \cite[p. 1283]{LeFloch-Smulevici (2015)}
\begin{equation} \label{A.27}
\frac{d\widehat{\mathcal E}_K}{dR} = 
- 2 R^{-1} \int_{Y} \left( a^{-1} U_R^2 + \frac14
R^{-2} e^{4U} a A_\theta^2 \right) d\theta - \frac12 K^2 R^{-3}
\int_{Y} {\mathcal D} e^{2 \eta} \: d\theta.
\end{equation}

If $t$ is a time variable, with $R$ a monotonically increasing function of $t$,
then
\begin{equation} \label{A.28}
\frac{d\widehat{\mathcal E}_K}{dt} = \frac{(\det G)_t}{2 \sqrt{\det G}}
\frac{d\widehat{\mathcal E}_K}{dR}.
\end{equation}
The quantity $\widehat{\mathcal E}_K$ is scale invariant.

To treat the more general case when $H \in \SL(2, \R)$, since
$C$ is a nonzero flat section of $e^*$, the matrix $H^{-T}$ must
be unipotent, i.e. conjugate to 
$\begin{pmatrix}
1 & c \\
0 & 1
\end{pmatrix}$.
The local coordinates $\{x^1, x^2\}$ are such that
$C_1 = 0$ and $C_2 \neq 0$. We claim that
the formula for $\widehat{\mathcal E}_K$ still makes sense.  To see this,
the result of parallel transport around $Y$ is
$x^1 \rightarrow x^1 +cx^2$ and $x^2 \rightarrow x^2$. In terms of the
metric (\ref{A.24}), this is the same as
$\eta \rightarrow \eta$,
$U \rightarrow U$, 
$a \rightarrow a$,
$A \rightarrow A + c$, 
$G \rightarrow G - cH$, 
$H \rightarrow H$ and
$K \rightarrow K$. 
One sees that the integrand of (\ref{A.26}) is preserved under these changes.
Hence the formula for $\widehat{\mathcal E}_K$ makes sense and
(\ref{A.27}) still holds.

Now suppose that $\widehat{\mathcal E}_K$ is constant in $t$.
From (\ref{A.27}) and (\ref{A.28}), if $\det G$ is not constant in $t$
(in which case $G$ is locally constant in $y$ and $t$ and $B$ is flat)
then $U$ and $A$ are constant in $y$ and $R$.  Using the
equations in 
\cite[Proposition 4.4]{LeFloch-Smulevici (2015)}, one finds that
the Lorentzian metric on $(0, \infty) \times Y$ is a constant times
\begin{equation} \label{A.29}
- \: \frac{R^2}{R^2 - CK^2} dR^2 +
\frac{1}{R^2} (R^2 - CK^2) e^{-2\sigma(\theta)} d\theta^2,
\end{equation}
where $C$ is a constant and $\sigma : Y \rightarrow \R$ is arbitrary.
If (\ref{A.29}) admits a future timelike curve along which the proper
time goes to infinity then
$R$ must range over an interval $[R_0, \infty)$. The length of the
$S^1$-fiber is bounded as $R \rightarrow \infty$.

\begin{remark} \label{A.30}
The second equation below 
\cite[(4.26)]{LeFloch-Smulevici (2015)} should read
$F := 2U_R U_\theta + 2 R^{-2} e^{4U} A_R A_\theta$.
\end{remark}

\subsection{Monotonicity of reduced volume} \label{subsectA.4}

In this subsection, we consider monotonic quantities when $\dim(B) > 2$.
As in 
Choquet-Bruhat 
\cite{Choquet-Bruhat (2004)}
and Choquet-Bruhat-Moncrief \cite{Choquet-Bruhat-Moncrief (2001)},
we make an appropriate conformal transformation of the Lorentzian metric
on $B$ and assume that the new metric has an expanding CMC foliation.
It turns out that
the normalized volume of the time slice is monotonically nonincreasing.

To simplify the calculations, we start with a Lorentzian metric 
$\overline{g}$ of the form (\ref{A.1}) and consider a conformally related
metric $\overline{h} = e^{2 \overline{\phi}} \overline{g}$, where 
$\overline{\phi}$ pulls back from $B$.  We impose the vacuum Einstein
equations on $\overline{h}$. With an appropriate choice of $\overline{\phi}$,
the monotonic quantity is derived from the geometry of $(B, g)$.

The papers \cite{Choquet-Bruhat (2004)}
and \cite{Choquet-Bruhat-Moncrief (2001)} deal with the case $N=2$.
The space of inner products $G$ on $\R^2$ is isomorphic to 
$\R^+ \times H^2$, which gives the link between the present paper 
and the formalism of
\cite{Choquet-Bruhat (2004)} and \cite{Choquet-Bruhat-Moncrief (2001)}.

The monotonic quantity in this section is only defined when 
$\dim(B) > 2$. If $\dim(B) = 2$ then the formula for $\overline{\phi}$
is such that $\overline{h}$ would necessarily have a constant volume
density on its $\R^N$-fibers, which need not be the case.

To begin, we consider the effect of a conformal change on an
arbitrary Lorentzian metric $\overline{g}$ on $M$.
Given 
$\overline{\phi} \in C^\infty(M)$,
put $\overline{h} = e^{2 \overline{\phi}} \overline{g}$. 
Then the Ricci curvature
of $\overline{h}$ is given by
\begin{equation} \label{A.31}
\overline{R}^{\overline{h}}_{ab} =
\overline{R}^{\overline{g}}_{ab} - (m-2) \overline{\phi}_{;ab} +
(m-2) \overline{\phi}_{,a} \overline{\phi}_{,b} - 
(\triangle_{\overline{g}} \overline{\phi} + (m-2) 
|\nabla \overline{\phi}|_{\overline{g}}^2
) \: \overline{g}_{ab}.
\end{equation}

We now assume that $\overline{g}$ is of the form (\ref{A.1}).
Given $\phi \in C^\infty(B)$, put $\overline{\phi} = \pi^* \phi
\in C^\infty(M)$. Then on a fiber $\pi^{-1}(b)$,
\begin{align} \label{A.32}
\overline{\phi}^{\overline{g}}_{;IJ} =  \: & \frac12 \langle \nabla G_{IJ},
\nabla \phi \rangle,  \\
\overline{\phi}^{\overline{g}}_{;I\alpha} = \: & 0 \notag \\
\overline{\phi}^{\overline{g}}_{;\alpha \beta} = \: & 
{\phi}^g_{;\alpha \beta}  \notag \\
\triangle_{\overline{g}} \overline{\phi} = & \triangle_g \phi +
\frac12 \langle \nabla \ln \det G, \nabla \phi \rangle. \notag
\end{align}
Combining (\ref{A.3}), (\ref{A.31}) and (\ref{A.32}) gives
\begin{align} \label{A.33} 
\overline{R}_{IJ}^{\overline{h}} 
\:  =  \: & - \: \frac12 \: g^{\alpha \beta} \: 
G_{IJ; \alpha \beta} \: - \: \frac14 \: g^{\alpha \beta} \:
G^{KL} \: G_{KL, \alpha} \: G_{IJ, \beta} \: + \:
\frac12 \: g^{\alpha \beta} \: G^{KL} \: G_{IK, \alpha} \:
G_{LJ, \beta} \: + \\
&  \frac14 \: g^{\alpha \gamma} \: g^{\beta \delta} \:
G_{IK} \: G_{JL} \: F^K_{\alpha \beta} \: F^L_{\gamma \delta} 
- \frac12 (n+N-1) \langle \nabla G_{IJ}, \nabla \phi \rangle_g - \notag \\
& \left( \triangle_g \phi + \frac12 
\langle \nabla \ln \det G, \nabla \phi \rangle_g
+ (n+N-1) |\nabla \phi|_g^2 \right) G_{IJ}
\notag \\
\overline{R}_{I \alpha}^{\overline{h}}
 \:  =  \: & \frac12 \: g^{\gamma \delta} \:
G_{IK} \: F^K_{\alpha \gamma; \delta} \: + \: \frac12 \: g^{\gamma \delta} \:
G_{IK; \gamma} \: F^K_{\alpha \delta} \: + \: \frac14 \:
g^{\gamma \delta} \: G_{IM} \: G^{KL} \: G_{KL; \gamma} \: F^M_{\alpha \delta} 
\notag \\
\overline{R}_{\alpha \beta}^{\overline{h}} 
\:  =  \: & R_{\alpha \beta}^g \: - \: 
\frac12 \: G^{IJ} \: G_{IJ; \alpha \beta} \: + \: \frac14 \:
G^{IJ} \: G_{JK,\alpha} \: G^{KL} \: G_{LI,\beta} \: - \:
\frac12 \: g^{\gamma \delta} \: \: G_{IJ} \: F^I_{\alpha \gamma} \:
F^J_{\beta \delta} - \notag \\
& (n+N-1) \phi_{;\alpha \beta} + (n+N-1)
\phi_{,\alpha} \phi_{,\beta} - \notag \\
& \left(\triangle_g \phi + \frac12 \langle \nabla \ln \det G, 
\nabla \phi \rangle_g 
+ (n+N-1) |\nabla \phi|_g^2 \right) g_{\alpha \beta}. \notag
\end{align}

We now set 
\begin{equation} \label{A.34}
\phi = - \frac{1}{2(n+N-1)} \ln \det G,
\end{equation}
so that
\begin{equation} \label{A.35}
\frac12 \langle \nabla \ln \det G, 
\nabla \phi \rangle_g 
+ (n+N-1) |\nabla \phi|_g^2 = 0.
\end{equation}
We set the left-hand side of (\ref{A.33}) to be zero.  Multiplying the
first equation of (\ref{A.33}) by $G^{IJ}$, summing over $I$ and $J$,
and using the equation
\begin{equation} \label{A.36}
\triangle_g \ln \det G = 
g^{\alpha \beta} G^{IJ} G_{IJ; \alpha \beta} - 
g^{\alpha \beta} G^{IJ} G_{JK, \alpha} G^{KL} G_{LK, \beta},
\end{equation}
gives
\begin{align} \label{A.37}
0 = & \: - \frac12 \triangle_g \ln \det G - \frac14 |\nabla \ln \det G|_g^2
+ \frac14 g^{\alpha \gamma} g^{\beta \delta} G_{IJ} F^I_{\alpha \beta}
F^J_{\gamma \delta} - \\
& \: \frac12 (n+N-1) \langle \nabla \ln \det G, \nabla \phi
\rangle_g - N \triangle_g \phi \notag \\
= & \: \frac{1-n}{2(n+N-1)} \triangle_g \ln \det G
+ \frac14 g^{\alpha \gamma} g^{\beta \delta} G_{IJ} F^I_{\alpha \beta}
F^J_{\gamma \delta}. \notag  
\end{align}

Using the equation
\begin{equation} \label{A.38}
(\ln \det G)_{;\alpha \beta} = 
G^{IJ}  G_{IJ; \alpha \beta} - 
G^{IJ}  G_{JK, \alpha} G^{KL} G_{LI, \beta},
\end{equation}
the last equation of (\ref{A.33}) becomes
\begin{align} \label{A.39}
0 \:  =  \: & R_{\alpha \beta}^g \: - \: 
\frac12 \: (\ln \det G)_{; \alpha \beta} \: - \: \frac14 \:
G^{IJ} \: G_{JK,\alpha} \: G^{KL} \: G_{LI,\beta} \: - \:
\frac12 \: g^{\gamma \delta} \: \: G_{IJ} \: F^I_{\alpha \gamma} \:
F^J_{\beta \delta} - \\
& (n+N-1) \phi_{;\alpha \beta} + (n+N-1)
\phi_{,\alpha} \phi_{,\beta} - 
(\triangle_g \phi) g_{\alpha \beta} \notag  \\
=  \: & R_{\alpha \beta}^g \: - \: \frac14 \:
G^{IJ} \: G_{JK;\alpha} \: G^{KL} \: G_{LI;\beta} \: - \:
\frac12 \: g^{\gamma \delta} \: \: G_{IJ} \: F^I_{\alpha \gamma} \:
F^J_{\beta \delta} + \notag \\
& \frac{1}{4(n+N-1)} (\ln \det G)_{,\alpha} (\ln \det G)_{,\beta}
+ \frac{1}{2(n+N-1)} (\triangle_g \ln \det G)
g_{\alpha \beta}. \notag 
\end{align}
Using (\ref{A.37}), if $n > 1$ then
\begin{align} \label{A.40}
R_{\alpha \beta}^g = & \: 
\frac14 \:
G^{IJ} \: G_{JK,\alpha} \: G^{KL} \: G_{LI,\beta} \: + \:
\frac12 \: g^{\gamma \delta} \: \: G_{IJ} \: F^I_{\alpha \gamma} \:
F^J_{\beta \delta} - \\
& \: \frac{1}{4(n+N-1)} (\ln \det G)_{,\alpha} (\ln \det G)_{,\beta}
- \frac{1}{4(n-1)} g^{\mu \nu} g^{\gamma \delta} G_{IJ} F^I_{\mu \gamma}
F^J_{\nu \delta}
g_{\alpha \beta}. \notag
\end{align}

In terms of the decomposition (\ref{A.9}),
let $K_{ij}$ be the second fundamental form of the spatial
hypersurfaces. 
By performing a gauge transformation, we can assume that
$A_0 = 0$. Put 
\begin{align} \label{A.41}
H = & h^{ij}K_{ij},\\
K^0_{ij} = & K_{ij} - \frac{1}{n} H h_{ij}, \notag \\
|K|^2 = & K^{ij}K_{ij}, \notag \\
\left| K^0 \right|^2 = & K^{0,ij} K^0_{ij} = |K|^2 - \frac{1}{n} H^2. 
\notag \\ 
\left| \frac{\partial G}{\partial t} \right|^2 = &
\Tr \left( \left( G^{-1} G_{,0} \right)^2 \right) =
G^{IJ} \: G_{JK,0} \: G^{KL} \: G_{LI,0}, \notag \\
\left|\nabla G \right|^2_{G,h} = &
h^{ij} \Tr \left( G^{-1} G_{,i} G^{-1} G_{,j} \right) =
h^{ij} G^{IJ} \: G_{JK,i} \: G^{KL} \: G_{LI,j}, \notag \\
S_\alpha = & G^{- \frac12} G_{,\alpha} G^{- \frac12} - \frac{1}{N}
(\ln \det G)_{,\alpha} I_N, \notag \\
|S_0|^2 = & \Tr(S_0^2) = \left| \frac{\partial G}{\partial t} \right|^2 -
\frac{1}{N} \left( \frac{\partial \ln \det G}{\partial t} \right)^2, \notag \\
|\vec{S}|^2 = & h^{ij} \Tr(S_i S_j) =  h^{ij} \Tr \left( G^{-1} G_{,i}
G^{-1} G_{,j} \right) - 
\frac{1}{N} |\nabla \ln \det G|^2. \notag
\end{align}
Then from the Gauss-Codazzi equation,
\begin{equation} \label{A.42}
R^g_{00} - \frac12 R^g g_{00} = \frac{L^2}{2} 
\left( R^h - |K|^2 + H^2 \right) =
\frac{L^2}{2} 
\left( R^h - |K^0|^2 + \left( 1 - \frac{1}{n} \right) H^2 \right).   
\end{equation}
From (\ref{A.40}),
\begin{align} \label{A.42.5}
R^g_{00} - \frac12 R^g g_{00} = & \:
\frac18 \:
\left| \frac{\partial G}{\partial t} \right|^2
\: + \:
\frac18 \: L^2 |\nabla G|^2_{G,h}
 \:  -  \\ 
& \: \frac{1}{8(n+N-1)} \left( \frac{\partial \ln \det G}{\partial t} \right)^2
 -
 \frac{1}{8(n+N-1)} L^2
|\nabla \ln \det G|^2_h + \notag \\
& \:
\frac14 \: h^{ij} \: \: G_{IJ} \: F^I_{0i} \:
F^J_{0j}
 \: + \:
\frac18 \: L^2 h^{ik} h^{jl} \: \: G_{IJ} \: F^I_{ij} \:
F^J_{kl}. \notag
\end{align}
Hence we obtain the constraint equation
\begin{align} \label{A.43}
& L^2 \left( R^h - |K^0|^2 + \left( 1 - \frac{1}{n} \right) H^2 \right)
= \\
& \frac14 \: 
\left| \frac{\partial G}{\partial t} \right|^2
\: + \:
\frac14 \: L^2 |\nabla G|^2_{G,h} - \notag \\
& \: \frac{1}{4(n+N-1)} \left( 
\frac{\partial \ln \det G}{\partial t} \right)^2 -
\frac{1}{4(n+N-1)} L^2 |\nabla \ln \det G|^2_h 
 \: + \notag \\
& \:
\frac12 \: h^{ij} \: G_{IJ} \: F^I_{0i} \:
F^J_{0j}
 \: + \:
\frac14 \: L^2 h^{ik} h^{jl} \: G_{IJ} \: F^I_{ij} \:
F^J_{kl} \notag
\end{align}
or, equivalently,
\begin{align} \label{A.44}
& L^2 \left( R^h - |K^0|^2 + \left( 1 - \frac{1}{n} \right) H^2 \right)
= \\
& \: \frac14 \:
\left| S_0 \right|^2
\: + \:
\frac14 \: L^2 |\vec{S}|^2 + \notag \\
& \frac{n-1}{4N(n+N-1)} 
\left( \frac{\partial \ln \det G}{\partial t} \right)^2 
\: + \:
\frac{n-1}{4N(n+N-1)}  \: L^2 |\nabla \ln \det G|^2_{h} + \notag \\
& \frac12 \: h^{ij} \: \: G_{IJ} \: F^I_{0i} \:
F^J_{0j}
 \: + \:
\frac14 \: L^2 h^{ik} h^{jl} \: \: G_{IJ} \: F^I_{ij} \:
F^J_{kl}. \notag
\end{align}

From the spacetime splitting,
\begin{equation} \label{A.45}
\frac{\partial h_{ij}}{\partial t} = - 2 L K_{ij}
\end{equation}
and
\begin{equation} \label{A.46}
\frac{\partial K_{ij}}{\partial t} =  L H K_{ij} - 2 L
h^{kl} K_{ik} K_{lj} - L_{;ij} + L R^h_{ij} - L
R^g_{ij},
\end{equation}
where the covariant derivatives are now with respect to $h$.
Then using (\ref{A.40}), (\ref{A.41}) and (\ref{A.44}),
\begin{align} \label{A.47}
\frac{\partial H}{\partial t} = & \: L H^2 - \triangle_h L + L R^h -
L h^{ij} R^g_{ij} \\
= & \: L H^2 - \triangle_h L + L R^h -
\frac14 L |\nabla G|_{G,h}^2 +
\frac{1}{4(n+N-1)} L |\nabla \ln \det G|^2_h - \notag \\
& \frac{1}{2(n-1)} L^{-1} 
h^{ij} G_{IJ} F^I_{0i} F^J_{0j} - \frac{n-2}{4(n-1)} L
h^{ij} h^{kl} G_{IJ} F^I_{ik} F^J_{jl} \notag \\
= & \: - \triangle_h L + L |K^0|^2 + \frac{1}{n} L H^2 + 
\notag \\
& \frac14 L^{-1} \left| S_0 \right|^2 + 
\frac{n-1}{4N(n+N-1)} L^{-1} \left( 
\frac{\partial \ln \det G}{\partial t} \right)^2 +
\notag \\
& \: \frac{n-2}{2(n-1)} L^{-1} 
h^{ij} G_{IJ} F^I_{0i} F^J_{0j} + \frac{1}{4(n-1)} L
h^{ij} h^{kl} G_{IJ} F^I_{ik} F^J_{jl}. \notag
\end{align}

Now suppose that $H$ is spatially constant but time-dependent.
The maximum principle, when applied to (\ref{A.47}), gives
\begin{equation} \label{A.48}
L \le \frac{n}{H^2} \frac{\partial H}{\partial t}.
\end{equation}
We have the pointwise identity
\begin{equation} \label{A.49}
\frac{\partial}{\partial t} \dvol(Y, h) =
\frac12 h^{ij} \frac{\partial h_{ij}}{\partial t} \: \dvol_h =
- L H \: \dvol_h,
\end{equation}
so
\begin{equation} \label{A.50}
\frac{\partial}{\partial t} \left( (-H)^n \dvol(Y, h) \right) =
(-H)^{n+1} \left( L -  \frac{n}{H^2} \frac{\partial H}{\partial t}
\right) \: \dvol(Y,h).
\end{equation}
Assuming that $H$ is negative,
it follows from (\ref{A.48}) and (\ref{A.50}) 
that $(-H)^n \dvol(Y,h(t))$ is pointwise 
monotonically nonincreasing
in $t$, and hence $(-H)^n \vol(Y,h(t))$ is monotonically nonincreasing in $t$. 
Applying (\ref{A.47}) to (\ref{A.50}) gives
\begin{align} \label{A.51}
& \frac{d}{dt} \left( (-H)^n \vol(Y, h) \right) = 
- \: n (-H)^{n-1} \int_Y 
\left[
L |K^0|^2 + 
\frac14 L^{-1} \left| S_0 \right|^2 + \right. \\
& \frac{n-1}{4N(n+N-1)} L^{-1} \left( 
\frac{\partial \ln \det G}{\partial t} \right)^2 +
\frac{n-2}{2(n-1)} L^{-1} 
h^{ij} G_{IJ} F^I_{0i} F^J_{0j} + \notag \\
& \left. \frac{1}{4(n-1)} L
h^{ij} h^{kl} G_{IJ} F^I_{ik} F^J_{jl}
\right]\: \dvol(Y,h). \notag
\end{align}

We note in passing that (\ref{A.44}) gives an energy-type interpretation 
for the normalized volume, as
\begin{align} \label{A.52}
& (-H)^n \vol(Y, h) = \\
& \frac{n}{n-1} (-H)^{n-2} \int_Y \left[ - R^h + |K^0|^2 + 
\: \frac14 L^{-2} \:
\left| S_0 \right|^2
\: + \:
\frac14 \: |\vec{S}|^2 + \right. \notag \\
& \frac{n-1}{4(n+N-1)} L^{-2} 
\left( \frac{\partial \ln \det G}{\partial t} \right)^2 
\: + \:
\frac{n-1}{4(n+N-1)}  \: |\nabla \ln \det G|^2_{h} + \notag \\
& \left. \frac12 L^{-2} \: h^{ij} \: \: G_{IJ} \: F^I_{0i} \:
F^J_{0j}
 \: + \:
\frac14 \: h^{ik} h^{jl} \: \: G_{IJ} \: F^I_{ij} \:
F^J_{kl} \right] \dvol_Y. \notag
\end{align}

If $(-H)^n \vol(Y, h)$ is constant in $t$ and $n > 2$ then
from (\ref{A.51}), we must have
\begin{equation} \label{A.53}
0 = K^0 = S_0 = \frac{\partial \ln \det G}{\partial t} =  F^I_{ij} = F^I_{0i}.
\end{equation}
Then $K_{ij} = \frac{1}{n} H h_{ij}$, the connection $A^I_i$ is
spatially flat and time-independent, and $G$ is time-independent.
Equation (\ref{A.47}) now has the unique solution
\begin{equation} \label{A.54}
L = n H^{-2} \frac{dH}{dt}.
\end{equation}
From (\ref{A.45}),
\begin{equation} \label{A.55}
\frac{\partial h_{ij}}{\partial t} = - 2 H^{-1} 
\frac{dH}{dt} h_{ij},
\end{equation}
so
\begin{equation} \label{A.56}
h_{ij}(t) = H^{-2}(t) H^2(1) h_{ij}(1).
\end{equation}
From (\ref{A.37}), we have $\triangle_h \ln \det G = 0$, so
$\ln \det G$ is constant.
Then from the first equation in (\ref{A.33}), $G$ satisfies
\begin{equation} \label{A.57}
0 \: = \: h^{ij} G_{IJ;ij} \: - \:  h^{ij} G^{KL}
G_{IK,i} G_{LJ,j},
\end{equation}
where the covariant derivatives are now with respect to $h$.
Equations  (\ref{A.40}) and (\ref{A.46}) now give
\begin{align} \label{A.58}
R^h_{ij} \: = & \: - \: \frac{n-1}{n^2} H^2 h_{ij} + R^g_{ij} \\
= & \: - \: \frac{n-1}{n^2} H^2 h_{ij} + \frac14 G^{IJ} G_{JK,i}
G^{KL} G_{LI,j}. \notag
\end{align}

Conversely, given a static solution $(h_{ij}, G_{IJ})$ to the pair
\begin{align} \label{A.59}
0 = & \: h^{ij} G_{IJ;ij} - h^{ij} G^{KL} G_{IK,i} G_{LJ,j}, \\
R^h_{ij} = & \: - (n-1) h_{ij} +
\frac14 G^{IJ} G_{JK,i} G^{KL} G_{LI,j}, \notag
\end{align}
and an increasing positive function $\sigma(t)$, we get a solution 
\begin{align} \label{A.60}
L(t) \: = & \: \frac{d\sigma}{dt}, \\
h_{ij}(t) \: = & \: \sigma^{2}(t) h_{ij}, \notag  \\
K_{ij}(t) \: = & \: - \: \sigma(t) h_{ij} \notag \\
G_{IJ}(t) \: = & \: G_{IJ} \notag
\end{align}
with $H(t) \: = \: - \: \frac{n}{\sigma(t)}$.
Solutions to (\ref{A.59}) are discussed in
\cite[Proposition 4.80]{Lott (2010)}. 

\subsubsection{The case $n=2$} \label{subsubsectA.3.3}

If $n=2$ and $(-H)^2 \vol(Y,h)$ is constant in $t$ then from (\ref{A.51}),
\begin{equation} \label{A.61}
0 = K^0 = S_0 = \frac{\partial \ln \det G}{\partial t} =  F^I_{ij}
\end{equation}
and so
\begin{equation} \label{A.62}
L = n H^{-2} \frac{dH}{dt}.
\end{equation}
Equation (\ref{A.37}) becomes
\begin{equation} \label{A.63}
\frac{1}{N+1} \triangle_h \ln \det G \: = \: - \: 
L^{-2} h^{ij} G_{IJ} F^I_{0i} F^J_{0j}.
\end{equation}
Integrating over $Y$ gives $F^I_{0i} = 0$.
The discussion in (\ref{A.55})-(\ref{A.60}) is now valid.

\end{document}